\newtheorem{thm}{Theorem}[section]
\newtheorem{lem}[thm]{Lemma}
\newtheorem{prop}[thm]{Proposition}
\theoremstyle{definition}
\newtheorem{defn}[thm]{Definition}
\theoremstyle{remark}
\newtheorem{rem}[thm]{Remark}
\numberwithin{equation}{section}
\newcommand{\be}{\begin{equation}}
\newcommand{\ee}{\end{equation}}
\newcommand{\R}{\mathbb R}
\newcommand{\eps}{\epsilon}
\newcommand{\p}{\partial}
\newcommand{\comment}[1]{}
\begin{document}

\title[Negative power potentials]{The Alt-Phillips functional for negative powers}
\author{D. De Silva}
\address{Department of Mathematics, Barnard College, Columbia University, New York, NY 10027}
\email{\tt  desilva@math.columbia.edu}
\author{O. Savin}
\address{Department of Mathematics, Columbia University, New York, NY 10027}\email{\tt  savin@math.columbia.edu}
\begin{abstract} We develop the free boundary regularity for nonnegative minimizers of the Alt-Phillips functional for negative power potentials
 $$\int_\Omega \left(\frac 1 2 |\nabla u|^2 + u^{\gamma} \chi_{\{u>0\}}\right) \, dx, \quad \quad \gamma \in (-2,0),$$
 and establish a $\Gamma$-convergence result of the rescaled energies to the perimeter functional as $\gamma \to -2$.
 \end{abstract}

\maketitle

\maketitle

\section{Introduction}

One fundamental problem in the calculus of variations consists in studying critical points for an energy functional of the type
$$ J(u, \Omega) = \int_\Omega\left( \frac 1 2 |\nabla u|^2 + W(u) \right)\, dx, $$
that is, solutions to its associated semilinear equation
$$ \triangle u=W'(u),$$
where $W: \mathbb{R} \to [0, \infty)$ represents a given potential with minimum $0$. Heuristically, minimizers of $J$ tend to concentrate their values near the zeros of $W$.

Certain classes of potentials have been extensively studied in the literature. One such example is the double-well potential $W(t)=(1-t^2)^2$ and the corresponding Allen-Cahn equation which appears in the theory of phase-transitions and minimal surfaces, see \cite{AlC,CH,MM}. 
When the potential $W$ is not of class $C^2$ near one of its minimum points, then a minimizer can develop constant patches where it can take that value, and this leads to a free boundary problem. Two such potentials were investigated in great detail. The first one is the Lipschitz potential $W(t)=t^+$ which corresponds to the classical obstacle problem, and we refer the reader to the book of Petrosyan, Shahgholian and Uraltseva \cite{PSU} for an introduction to this subject.
The second one is the discontinuous potential $W(t)=\chi_{\{t>0\}}$ with its associated Alt-Caffarelli energy (see \cite{AC}),  which is known as the Bernoulli free boundary problem or the two-phase problem. We refer to the book of Caffarelli and Salsa \cite{CS} for an account of the basic free boundary theory in this setting.    
 These two important examples are part of the more general family of Alt-Phillips potentials 
 $$ W(t)= (t^+)^\gamma, \quad \quad \gamma \in [0,2), \quad \quad \triangle u= \gamma u^{\gamma-1}.$$
Nonnegative minimizers $u \ge 0$ of $J$ for these power potentials, together with their free boundaries $$F(u):=\p\{u>0\},$$ were studied by Alt and Phillips in \cite{AP}. They showed that $F(u)$ has finite $n-1$ Hausdorff measure and established the regularity of the reduced part of the free boundary.  

In this paper we study the regularity properties of nonnegative minimizers of $J$ and their free boundaries for potentials of negative powers
$$ W(t)= t^{\gamma} \chi_{\{t>0\}} , \quad \gamma \in (-2,0).$$ 
 The bound $\gamma >-2$ is necessary for the existence of functions with bounded energy. To the best of our knowledge there are no available results in the literature addressing the more degenerate case of negative power potentials.
 
 The negative power potentials are natural in modeling sharper transitions of densities $u$ between their zero set and positivity set.  
With respect to the classical one-phase model, $\gamma=0$, in which the energy penalizes the measure of the set $\{u>0\}$ uniformly, in the negative power setting this penalization is stronger when $u$ is positive and small. This means that minimizers transition faster from the regions where $u \sim 1$ to their zero set and are not expected to be Lipschitz continuous near the free boundary. Despite this, we show that the free boundaries possess sufficiently nice regularity properties: they are $C^{1,\beta}$ surfaces up to a closed singular set of dimension $n-3$, see Theorem \ref{T3}.
We also analyze the Gamma convergence of the functionals $J$ when $\gamma$ tends to $ -2$ and establish the connection between these free boundary problems and the minimal surface equation, see Theorem \ref{T4}.

The change of sign in the exponent makes the problem more degenerate and the free boundary condition takes a different form. Some of the difficulties can be seen from a direct analysis of the one-dimensional case that we sketch below. 
Consider the ODE
$$ u''=\gamma u^{\gamma-1} \quad \quad \mbox{in } \quad (0,\delta) \subset \{u>0\}, \quad \quad u(0)=0.$$
The explicit homogenous solution
$$u_0= c_{\gamma} t ^ \alpha , \quad \alpha:=\frac{2}{2-\gamma},$$
for an appropriate constant $c_\gamma$, plays an important role in the analysis. 
In general, it follows that $u$  has an expansion of the form
\begin{align*}
 u(t)& = a \, t + o(t^{1+ \delta}), \quad \quad  \quad \quad \quad  \quad \mbox{if $\gamma \ge 0$,} \\
u(t)&= c_{\gamma} t ^ \alpha + a \, t ^{2-\alpha} + o(t^{2-\alpha+ \delta}), \quad \mbox{if $\gamma<0$,}
\end{align*}
with $a \in \R$ a free parameter.
 If in addition $u$ minimizes $J$ in $[-\delta,\delta]$, then $a=0$ in the case $\gamma >0$ and $a= \sqrt 2$ when $\gamma=0$. These can be viewed as Neumann conditions at the free boundary point $t=0$ and imply that the first nonzero term in the expansion of $u$ near $0$ is given precisely by the homogenous explicit solution $c_{\gamma} t ^ \alpha$, $\alpha \ge 1$.

On the other hand when $\gamma <0$, all solutions have $c_{\gamma} t ^ \alpha$ as the first nonzero term in the expansion. In this case the minimality condition imposes that the coefficient of the second order term in the expansion must vanish, i.e. $a=0$. Since $\alpha <1$, $u'$ becomes infinite at $0$, and this free boundary condition cannot be easily detected by integrations by parts or domain variations as in the case of nonnegative exponents. 

In this work we analyze minimizers by introducing an appropriate notion of viscosity solutions, and use the method of calibrations in order to handle the singularity of the PDE and of the free boundary condition. 

We remark that, after a change of variable $w=u^{\frac 1\alpha}$, the free boundary problem associated to the minimization of $J$ can be written as a degenerate one-phase problem in the form 
$$ \triangle w= \frac{h(\nabla w)}{w} \quad \mbox{in $\{w>0\}$,} \quad h(\nabla w)=0 \quad \mbox{on $F(w)$,}$$
where $h$ is an explicit quadratic polynomial vanishing on $\partial B_1$. The sign of $h$ in $B_1$ plays an important role in the stability of Lipschitz solutions near their free boundaries. The case $h>0$ in $B_1$ corresponds to positive power potentials and this problem was analyzed in greater generality in our previous work \cite{DS2}. When $h <0$ in $B_1$, it corresponds to negative power potentials which we study in this work. The free boundary condition needs to be understood in terms of the second term expansion near $F(w)$, see Section 7.

The paper is organized as follows. In the next section we state our main results. In Section 3 we discuss the existence and optimal regularity of minimizers. In Section 4 we introduce the notion of viscosity solutions and establish the non-degeneracy of minimizers. In Section 5 and 6 we perform a blow-up analysis based on the Weiss Monotonicity formula and on the $C^{1,\beta}$ regularity of flat free boundaries which is proved in Section 7.


\section{Main results}

In this section, we provide the statement of our main results. Since from now on we are only concerned with negative exponents, we change the notation from the Introduction and denote the negative exponent of the potential $W$ by $-\gamma$ with $\gamma \in (0,2)$. Precisely let
\begin{equation}
W(t):=\begin{cases} \frac{1}{\gamma} t^{-\gamma}\quad \text{if $t>0$},\\

\

0 \quad \text{if $t \le 0$,}
\end{cases}, \quad \quad \gamma \in (0,2),
\end{equation}
we consider the minimization problem for the energy functional
\begin{equation}\label{J}
J(u,\Omega):= \int_{\Omega} \left(\frac 1 2 |\nabla u|^2 + W(u)\right) \, dx,
\end{equation}
among all non-negative $u\geq0$ with a given boundary data $\phi \in H^1(\Omega).$ 
Whenever it does not create confusion, the dependence of $J$ on the domain $\Omega$ is dropped.

The corresponding Euler-Lagrange equation in the set $\{u>0\}$  is
\begin{equation}\label{eq1}
\triangle u = - u^{-(\gamma+1)}
\end{equation}
which has the explicit homogenous solution
$$ u_0 = c_0 (x_n^+)^\alpha, \quad \quad \alpha:= \frac{2}{\gamma+2}, \quad c_0:=[\alpha(1-\alpha)]^{-\frac {1}{\gamma+2}}.$$
We remark that the problem is invariant under the $\alpha$-homogenous scaling
$$ u_\lambda (x) = \lambda^{-\alpha} u(\lambda x),$$
i.e. $u_\lambda$ is a minimizer for $J$ in $\lambda^{-1} \Omega$.

We state our main results below. Positive constants depending only on $n, \gamma$ will be called universal. We start with existence and regularity of minimizers.

\begin{thm}[Existence and optimal regularity]\label{ER}
Let $\Omega$ be a Lipschitz domain. There exists a nonnegative minimizer $u$ of $J$ with boundary data $\varphi \in H^1(\Omega)$, $\varphi \ge 0$. Moreover, any minimizer is H\" older continuous of exponent $\alpha$, i.e. $u \in C^\alpha(\Omega)$.
\end{thm}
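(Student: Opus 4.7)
I would apply the direct method. Let $\mathcal{A}_\varphi=\{v\in H^1(\Omega):v\ge 0,\ v-\varphi\in H^1_0(\Omega)\}$ and take a minimizing sequence $(u_k)\subset\mathcal{A}_\varphi$. The uniform bound $\int_\Omega|\nabla u_k|^2\le 2J(u_k)$ together with Poincar\'e's inequality applied to $u_k-\varphi\in H^1_0(\Omega)$ yields a uniform $H^1$ bound, so a subsequence converges weakly in $H^1$ and a.e.\ to some $u^*\in\mathcal{A}_\varphi$. The Dirichlet term is weakly lower semicontinuous. For the potential term, Fatou's lemma applied to the nonnegative sequence $W(u_k)$ suffices, using that $W$ is lower semicontinuous on $[0,\infty)$ (in particular at $0^+$, where $W\to+\infty$). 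Hence $J(u^*)\le\liminf J(u_k)$, so $u^*$ is a minimizer. Finiteness of $\inf J$ is ensured by exhibiting one competitor lifted slightly above $0$ on interior subdomains so that $\int W<\infty$.

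\textbf{Regularity via a Campanato-type decay.} For the sharp exponent $\alpha=2/(\gamma+2)$, my strategy is to prove the energy decay
\[
\int_{B_r(x_0)}\bigl(|\nabla u|^2+2W(u)\bigr)\,dx\le C\, r^{n-2+2\alpha}
\]
for every $x_0\in\Omega$ and every small $r>0$, with $C$ universal. The exponent $n-2+2\alpha$ is forced by the $\alpha$-homogeneous scaling $u_\lambda(x)=\lambda^{-\alpha}u(\lambda x)$ that preserves $J$, and the explicit solution $u_0=c_0(x_n^+)^\alpha$ realizes it with equality on balls centered on its free boundary. Once the decay is established, Poincar\'e's inequality converts it into the mean-oscillation bound $\int_{B_r(x_0)}|u-\bar u|^2\,dx\le C r^{n+2\alpha}$, and the Campanato characterization of H\"older spaces delivers $u\in C^\alpha(\Omega)$.

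\textbf{Obtaining the decay and main obstacle.} The decay follows by testing the minimality inequality $J(u,B_r)\le J(v,B_r)$ against an admissible competitor $v$ that agrees with $u$ on $\partial B_r$ and satisfies $\int_{B_r}\bigl(|\nabla v|^2+W(v)\bigr)\le C r^{n-2+2\alpha}$. Natural candidates are the harmonic extension of the boundary trace (suitably lifted to avoid small values) or a radial profile built from $u_0$. The main obstacle is the singular potential: $W(u)\to\infty$ as $u\to 0^+$ forces the competitor to stay bounded away from zero, while the nonnegativity constraint and the requirement to match $u$ on $\partial B_r$ (where $u$ itself can vanish) pull in opposite directions. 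Moreover, since $\alpha<1$ the minimizer is not Lipschitz across $F(u)$, so the cleaner competitor constructions available in the classical Alt--Caffarelli case $\gamma=0$ or the positive-power Alt--Phillips regime do not apply; one has to work at the $C^\alpha$ scaling dictated by $u_0$ and absorb cross terms using the $\alpha$-homogeneous invariance, which is the technical heart of the argument.
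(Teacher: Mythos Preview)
Your existence argument is correct and matches the paper's: direct method, weak $H^1$ compactness, and Fatou for the potential term. The paper makes the finite-energy competitor explicit as $\phi_h + (1-|x|)^\alpha$, where $\phi_h$ is the harmonic extension of $\varphi$; this is precisely the kind of lifted competitor you allude to.

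For regularity, your Campanato framework with the target decay $\int_{B_r}|\nabla u|^2 \le C\, r^{\,n-2+2\alpha}$ is exactly what the paper establishes (it tracks only the Dirichlet part, which suffices). But your sentence ``the decay follows by testing against a competitor $v$ with $\int_{B_r}(|\nabla v|^2+W(v))\le C r^{n-2+2\alpha}$'' hides the actual difficulty: any admissible $v$ must agree with $u$ on $\partial B_r$, so its Dirichlet energy is at least that of the harmonic replacement $\phi_h$ of $u$ in $B_r$, over which you have no a priori control. You cannot bound the competitor's energy by the scale $r^{n-2+2\alpha}$ without already knowing something about $u$. So a direct comparison of the type you describe does not close.

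The paper's resolution is an \emph{improvement-of-decay} step rather than a direct bound. Set $a(r)=r^{1-\alpha}\bigl(\fint_{B_r}|\nabla u|^2\bigr)^{1/2}$ and prove: if $a(1)\ge M$ for a large universal $M$, then $a(\rho)\le\tfrac12 a(1)$ for a fixed small $\rho$. The mechanism is that after the $\alpha$-homogeneous rescaling normalizing $\fint_{B_1}|\nabla u|^2=1$, the potential enters with a \emph{small} coefficient $\epsilon$ (roughly $M^{-(\gamma+2)}$). Then the competitor $v=\phi_h+\epsilon(1-|x|)^\alpha$ --- precisely your ``harmonic extension suitably lifted by a radial profile built from $u_0$'' --- satisfies $J(v)\le J(\phi_h)+C\epsilon$, and minimality together with orthogonality of $\phi_h$ yields $\int_{B_1}|\nabla(u-\phi_h)|^2\le C\epsilon$. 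Since $|\nabla\phi_h|^2$ is subharmonic and hence bounded on interior balls, this gives $a(\rho)\le\tfrac12$ once $\epsilon$ (equivalently $M^{-1}$) and then $\rho$ are chosen small. Iterating yields $a(r)\le C$ for all $r$, and Campanato finishes. You correctly identified the obstacle and the right ingredients; the missing idea is that the $\alpha$-homogeneous rescaling converts ``energy above the expected scale'' into ``small coupling constant in front of $W$,'' which is what makes the harmonic-replacement comparison quantitative.
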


The second result concerns the non-degeneracy of minimizers around the free boundary $F(u):=\partial \{u>0\} \cap \Omega$.

\begin{thm}\label{T2}
Assume that $u\ge 0$ is a minimizer of $J$ in $B_1$ and $0 \in F(u)$. Then 
$$ c r^ \alpha \le \max_{\partial B_r} u \le C r^\alpha, \quad \quad \forall r \le 1/2,$$
with $c$, $C$ universal.
Moreover, the $\alpha$-homogenous rescalings $u_\lambda$ converge on subsequences $\lambda_n \to 0$ to a global $\alpha$-homogenous minimizer, i.e. a cone.
\end{thm}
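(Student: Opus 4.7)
The upper bound is immediate: since $0\in F(u)$ and $u$ is continuous, $u(0)=0$, and the $C^\alpha$ estimate of Theorem~\ref{ER} then gives $\max_{\partial B_r} u\le Cr^\alpha$ with universal $C$. For the lower bound I would argue by contradiction. Assume $\max_{\partial B_r} u < \epsilon r^\alpha$ for some small $\epsilon>0$; after the $\alpha$-homogenous rescaling $v(x):=r^{-\alpha}u(rx)$, which is a minimizer on $B_{1/r}$ with $v|_{\partial B_1}\le\epsilon$, it suffices to prove that $v\equiv 0$ on $B_{1/2}$ when $\epsilon$ is a small universal constant, in contradiction with $0\in F(v)$. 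The argument is an energy comparison with an admissible competitor $w$ that agrees with $v$ on $\partial B_1$, vanishes on $B_{1/2}$, and satisfies $J(w,B_1)<J(v,B_1)$ whenever $v\not\equiv 0$ on $B_{1/2}$.

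The essential obstruction is the singular potential $W(t)=t^{-\gamma}/\gamma$, which blows up as $t\downarrow 0$. This rules out soft cutoffs of the form $w=\eta v$, since such a $w$ would take small positive values on the transition region at an enormous potential cost. The competitor must be designed so that wherever $w>0$ it is bounded below by a definite power of the distance to $\partial B_{1/2}$ --- the natural model is the explicit one-dimensional solution $c_0 s^\alpha$, and one takes $w$ to be an appropriate interpolation of this profile on the annulus $B_1\setminus B_{1/2}$, calibrated to match the boundary data. Because $\alpha\in(1/2,1)$, the exponents $2\alpha-2$ and $-\alpha\gamma$ that appear in the energy density both lie in $(-1,0)$, so both the Dirichlet and potential integrals on the annulus are finite, and a scaling computation shows that the total cost $J(w,B_1\setminus B_{1/2})$ is small with $\epsilon$. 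On the other side, if $v$ is not identically zero on $B_{1/2}$ then $J(v,B_1)$ already carries the potential mass $\int_{\{v>0\}\cap B_{1/2}} v^{-\gamma}/\gamma\,dx$, which dominates the cost of $w$ for $\epsilon$ small and yields the contradiction. The delicate piece is translating the boundary smallness $v|_{\partial B_1}\le\epsilon$ into effective control inside $B_{1/2}$, which is where the superharmonicity of $v$ in $\{v>0\}$ (from $\Delta v=-v^{-\gamma-1}\le 0$) and the Hölder estimate of Theorem~\ref{ER} both come in.

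For the cone part of the statement, the rescalings $u_\lambda(x)=\lambda^{-\alpha}u(\lambda x)$ are uniformly $\alpha$-Hölder by Theorem~\ref{ER}, and thanks to the non-degeneracy just proved they satisfy $c\le R^{-\alpha}\max_{\partial B_R}u_\lambda\le C$ at every scale. Arzelà--Ascoli produces a subsequence $u_{\lambda_n}\to u_0$ locally uniformly, with $u_0\ge 0$, $u_0(0)=0$, and $u_0\not\equiv 0$. Minimality passes to the limit via a cut-and-paste construction: given an admissible competitor $\phi$ for $u_0$ on $B_R$, one grafts $\phi$ onto $u_{\lambda_n}$ through a thin collar near $\partial B_R$ and shows that the energy perturbation vanishes (the singular potential only appears in regions where the competitor is bounded away from $0$, where uniform convergence suffices). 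Finally, the Weiss-type monotonicity formula (established in Section~5) forces $\Phi(u_0,r)$ to be constant in $r$, and the equality case of that formula identifies $u_0$ as $\alpha$-homogenous, i.e.~a cone. The main technical difficulty throughout is the singular potential, which prohibits soft cutoffs and requires every competitor to be calibrated against the explicit profile $c_0 s^\alpha$; designing such a calibrated competitor and transferring the boundary smallness of $v$ into interior information is the heart of the lower bound.
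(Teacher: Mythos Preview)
Your treatment of the cone convergence via Arzel\`a--Ascoli, compactness of minimizers, and Weiss monotonicity matches the paper's. The two-sided bound, however, has gaps. For the upper bound, Theorem~\ref{ER} (Proposition~\ref{opt1}) produces a $C^\alpha$ constant depending on $\|\phi\|_{H^1}$, not a universal one; the paper obtains universal $C$ only after showing minimizers are viscosity solutions (Proposition~\ref{min=vis}) and running a barrier argument against the free boundary condition (Lemma~\ref{opt}, Remark~\ref{r1}).

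The more serious gap is in the lower bound. Your claim that the potential mass $\int_{\{v>0\}\cap B_{1/2}} v^{-\gamma}/\gamma\,dx$ dominates $J(w)$ whenever $v\not\equiv 0$ on $B_{1/2}$ fails without additional input: if $\{v>0\}\cap B_{1/2}$ is a sliver of width $\rho$, weak non-degeneracy and optimal regularity pin $v\sim d^\alpha$ there ($d=\mathrm{dist}(\cdot,F(v))$), so $\int v^{-\gamma}\sim \rho^{\,1-\alpha\gamma}\to 0$ since $\alpha\gamma=2\gamma/(2+\gamma)<1$. Superharmonicity and the H\"older bound do not exclude thin positivity sets; what is missing is a \emph{density estimate}. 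The paper supplies it through an additional calibration step: after showing $J(v,B_1)=o(1)$ from a supersolution barrier (essentially your small-$J(w)$ computation), it foliates the region $\{0\le x_{n+1}\le \kappa|x|^\alpha\}$ by graphs of rescaled subsolutions $\phi_\lambda(x)=\lambda^{-\alpha}\phi(\lambda x)$ and invokes the viscosity subsolution property of minimizers (Proposition~\ref{min=vis}, Remark~\ref{comp}) to obtain a dichotomy --- either $v\equiv 0$ in $B_1$, or $v(\bar x)>\kappa|\bar x|^\alpha$ somewhere. In the second alternative Lemma~\ref{opt} yields a ball $B_{c'r}(\bar x)\subset\{v>0\}$, and a covering argument gives $J(v,B_1)\ge c$ universal, contradicting the energy smallness. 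This foliation/density mechanism is the missing idea; the tools you name cannot replace it.
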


In Section 6, Theorem \ref{T5},  we show that flat free boundaries are regular. Then the regularity of $F(u)$ depends on the classification of minimal cones in low dimensions. We establish that minimal cones are trivial in dimension $n=2$ and obtain the following partial regularity result.  

\begin{thm}\label{T3}
Let $u$ be a nonnegative minimizer for $J$ in $B_1$. Then 
 $$ \mathcal H^{n-1}(F(u) \cap B_{1/2}) \le C(n, \gamma) $$
 and $F(u)$ is locally $C^{1,\beta}$ except on a closed singular set $\Sigma_u \subset F(u)$ of Hausdorff dimension $n-3$. 
 \end{thm}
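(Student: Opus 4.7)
The plan is to follow the classical three-step strategy for partial regularity in free-boundary problems: stratify $F(u)$ via blow-up classification, promote the regular set to a $C^{1,\beta}$ manifold through the flatness theorem, and bound the singular set via Federer dimension reduction. By Theorem \ref{T2} together with the Weiss monotonicity formula of Section 5, at every $x_0 \in F(u)$ the $\alpha$-homogeneous rescalings $\lambda^{-\alpha} u(x_0 + \lambda x)$ subconverge to a nonzero $\alpha$-homogeneous global minimizing cone $u_{x_0}$ with $0 \in F(u_{x_0})$. I would then declare $x_0$ \emph{regular} if some such blow-up is the one-dimensional half-space profile $c_0((x\cdot\nu)^+)^\alpha$ for a unit vector $\nu$, and set $\Sigma_u := F(u)\setminus \mathrm{Reg}(u)$.

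At any regular point, for some sufficiently small scale the rescaling is close in the appropriate topology to the half-plane solution, so Theorem \ref{T5} provides a $C^{1,\beta}$ parametrization of $F(u)$ in a neighborhood of $x_0$. The same improvement-of-flatness mechanism shows that $\mathrm{Reg}(u)$ is open in $F(u)$, hence $\Sigma_u$ is closed. To control the dimension of $\Sigma_u$, I would first verify that the class of nonnegative minimizers is closed under $\alpha$-homogeneous rescaling and under locally uniform convergence---the compactness coming from the optimal $C^\alpha$ bound of Theorem \ref{ER} and from the convergence of the Weiss energies. With this in hand, the standard Federer dimension-reduction argument applies: if $\dim_{\mathcal H}\Sigma_u > n-3$, iterated blow-ups would produce a homogeneous minimizing cone in ambient dimension at most $2$ that is not a half-space, contradicting the classification of minimizing cones in $n=2$ established in Section 6. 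Hence $\dim_{\mathcal H}\Sigma_u \le n-3$.

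For the $\mathcal{H}^{n-1}$ bound, non-degeneracy (Theorem \ref{T2}) together with the H\"older upper bound yields uniform positive Lebesgue density of both $\{u>0\}$ and $\{u=0\}$ at every $x_0 \in F(u)$. A Vitali-type covering then reduces the claim to the finiteness of the perimeter of $\{u>0\}$ in a slightly larger ball, which I would obtain either from a coarea comparison with the smooth level sets $\{u=\varepsilon\}$ via the Dirichlet energy bound of Theorem \ref{ER}, or directly by summing the $\mathcal{H}^{n-1}$ contributions from the $C^{1,\beta}$ pieces of $\mathrm{Reg}(u)$ once $\mathcal{H}^{n-1}(\Sigma_u)=0$ is known from the dimension bound.

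The main obstacle is establishing the closure of the minimizing class under blow-up limits. Because $W(t)=t^{-\gamma}/\gamma$ blows up as $t\to 0^+$, one must rule out that the limiting free boundary develops new zero regions in the interior of $\{u_{x_0}>0\}$ and verify that the singular energy density passes correctly to the limit; the non-degeneracy of Theorem \ref{T2} and the uniform $C^\alpha$ estimate of Theorem \ref{ER} are the tools for this, but it must be checked that they survive every reduction in Federer's iteration. The $n=2$ classification of minimizing cones, which serves as the base case of the dimension-reduction argument, is the other delicate ingredient.
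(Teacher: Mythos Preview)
Your outline matches the paper's approach essentially line for line: blow-up classification via Weiss monotonicity, flatness $\Rightarrow C^{1,\beta}$ at regular points (Theorem \ref{T5}), the energy gap for cones, the 2D classification, and Federer dimension reduction; the paper in fact assembles exactly these ingredients in Sections 5--6 and then declares the theorem a consequence of ``standard techniques,'' omitting the details. The one place your proposal diverges slightly is in how you plan to obtain closure of minimizers under uniform limits: you invoke convergence of the Weiss energies, whereas the paper handles the singular potential directly via an interpolation construction (Lemma \ref{uv}) that glues a competitor to $u_k$ across a thin annulus while controlling the $W(u)$-contribution---this is the paper's answer to the ``main obstacle'' you flag, and it also feeds into the $\R^n/\R^{n-1}$ dimension-reduction lemma (Proposition \ref{dim}) that you need for Federer's iteration.
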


Finally we obtain a Gamma convergence result for appropriate multiples of the $J$ functional as $\gamma \to 2$. Let $\Omega$ be a bounded Lipschitz domain. We equip the space of nonnegative integrable functions 
$$X:=\{ u \in L^1(\Omega), u \ge 0\} $$ 
with the distance
$$d_X(u,v):=\|u-v\|_{L^1} + \|\chi_{\{u>0\}}-\chi_{\{v>0\}} \|_{L^1}.$$

\begin{thm}\label{T4}
As $\gamma \to 2^-$, the rescaled $J$ functionals
$$\mathcal J_\gamma(u):= c_\gamma \, J(u,\Omega),\quad \quad c_\gamma:=(1-\frac \gamma 2) \sqrt{\gamma/2} ,$$
Gamma converge in $X$ to the perimeter function
$$\mathcal P (u) = Per_\Omega (\{u>0\}).$$
Precisely,

a) if $u_n \to u$ in $X$ and $\gamma_n \to 2$, then $\liminf \mathcal J_{\gamma_n}(u_n) \ge \mathcal P (u)$;

b) given $u \in X$, there exists $u_n \to u$ in $X$ such that $\mathcal J_{\gamma_n}(u_n) \to \mathcal P (u)$.

\end{thm}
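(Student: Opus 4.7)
The strategy is a Modica-Mortola-type $\Gamma$-convergence, in which the normalization $c_\gamma$ is tuned so that the effective surface tension equals $1$.

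For Part (a), define
$$f_\gamma(t) := c_\gamma \int_0^t \sqrt{2W(s)}\,ds\qquad (t > 0),$$
with $f_\gamma(t) := 0$ for $t \le 0$; an explicit computation gives $f_\gamma(t) = t^{(2-\gamma)/2}$ on $(0,\infty)$, so $f_\gamma \to \chi_{(0,\infty)}$ pointwise as $\gamma \to 2^-$. The AM-GM inequality $\tfrac12|\nabla u|^2 + W(u) \ge \sqrt{2W(u)}\,|\nabla u|$ combined with the chain rule $|\nabla f_\gamma(u)| = c_\gamma \sqrt{2W(u)}\,|\nabla u|$ yields
$$\mathcal J_\gamma(u) \;\ge\; \int_\Omega |\nabla f_\gamma(u)|\,dx \;=\; TV(f_\gamma(u);\Omega).$$
Given $u_n \to u$ in $X$ and $\gamma_n \to 2$, I would prove $f_{\gamma_n}(u_n) \to \chi_{\{u>0\}}$ in $L^1(\Omega)$ and invoke BV lower semicontinuity. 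On $\{u > 0\}$, $u_n \to u > 0$ a.e.\ gives $u_n^{(2-\gamma_n)/2} \to 1$ pointwise, and the bound $f_{\gamma_n}(u_n) \le 1 + u_n$ combined with equi-integrability of $u_n$ produces the $L^1$ limit via Vitali. On $\{u = 0\}$, the condition $|\{u_n > 0\}\triangle\{u>0\}| \to 0$ from $X$-convergence together with equi-integrability of $u_n$ yields $\int_{\{u=0\}} f_{\gamma_n}(u_n) \to 0$.

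For Part (b), the case $\mathcal P(u) = \infty$ is trivial. For $\mathcal P(u) < \infty$, begin with the model case $u = M\chi_E$ where $E \Subset \Omega$ has smooth boundary. Let $d(x)$ be the signed distance to $\partial E$ (positive in $E$) and $v_n(t) := c_0(\gamma_n)\,t^{\alpha_n}$ the explicit $1$D profile. For parameters $\delta_n, h_n \to 0$, set
$$u_n(x) := \begin{cases}
0 & d(x) \le 0, \\
v_n(d(x)) & 0 < d(x) \le \delta_n, \\
\text{linear from } v_n(\delta_n) \text{ to } M & \delta_n < d(x) \le \delta_n + h_n, \\
M & d(x) > \delta_n + h_n.
\end{cases}$$
The zero-Hamiltonian identity $\tfrac12(v_n')^2 = W(v_n)$ reduces the profile-layer contribution (per unit transverse area) to
$$c_{\gamma_n}\int_0^{\delta_n}\!\left[\tfrac12(v_n')^2 + W(v_n)\right]dt \;=\; f_{\gamma_n}(v_n(\delta_n)) \;=\; v_n(\delta_n)^{(2-\gamma_n)/2}.$$
The choice $v_n(\delta_n) := \sqrt{2-\gamma_n}$ (giving $\delta_n \sim (2-\gamma_n)/2$) sends this to $1$; with $h_n$ optimized as $\sim M\sqrt{2-\gamma_n}$, the interpolation cost is $O(\sqrt{2-\gamma_n})$ and the interior cost is $c_{\gamma_n}W(M)|E| = O(2-\gamma_n)$. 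Both vanish, and integrating the profile contribution over $\partial E$ yields $\mathcal J_{\gamma_n}(u_n) \to Per_\Omega(E) = \mathcal P(u)$, while $u_n \to M\chi_E$ in $L^1$ because the non-constant region has thickness $\delta_n + h_n \to 0$.

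For a general $u \in X$ with $\mathcal P(u) < \infty$, I would approximate $u$ in $X$ by $u^{(j)} := \max(u,\eta_j)\chi_{E_j}$, with $\eta_j \to 0$ and $E_j \Subset \Omega$ smooth sets satisfying $Per_\Omega(E_j) \to Per_\Omega(\{u>0\})$ (De Giorgi density of smooth sets in sets of finite perimeter), possibly with additional mollification of $u$ so that each $u^{(j)}$ is smooth, bounded, and bounded below on $E_j$. The recovery sequence for $u^{(j)}$ is constructed as in the model case with $M$ replaced by $u^{(j)}(x)$ in the interior region; the interior energy remains bounded in $n$ for each fixed $j$, so multiplying by $c_{\gamma_n} \to 0$ makes it vanish. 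A diagonal choice $n = n(j)$ then produces the required recovery sequence for $u$. The main technical obstacle is the careful balancing in the model case, where the three contributions (profile, interpolation, interior) must each be controlled by the single small parameter $\sqrt{2-\gamma_n}$; a secondary difficulty is the diagonalization, which must simultaneously preserve $L^1$ convergence of both the values and the supports required by the metric $d_X$.
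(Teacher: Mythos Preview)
Your approach is the same Modica--Mortola argument as the paper's: the AM--GM bound $\mathcal J_\gamma(u) \ge TV(u^{1-\gamma/2})$ plus BV lower semicontinuity for part (a), and a recovery sequence built from the explicit one-dimensional profile $c_0\, d^\alpha$ near a smooth approximation $\partial E$ of $\partial\{u>0\}$ for part (b). The paper constructs the recovery sequence for general $u$ in one shot, writing $v = c_0\min\{d,\delta\}^\alpha + \varphi(d)\,\tilde u$ with $\tilde u$ a smooth approximation of $u$ and $\varphi$ a cutoff in $d$; you instead treat the model case $u = M\chi_E$ first and then diagonalize. Both routes are standard and your energy accounting (profile, interpolation, interior) is correct.

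One genuine technical gap to flag: your requirement $E_j \Subset \Omega$ is too strong and the step ``$Per_\Omega(E_j) \to Per_\Omega(\{u>0\})$ with $E_j \Subset \Omega$'' can fail. Take for instance $u \equiv 1$ on $\Omega$, so $\{u>0\} = \Omega$ and $Per_\Omega(\{u>0\}) = 0$; any smooth $E_j \Subset \Omega$ with $|E_j| \to |\Omega|$ has $Per_\Omega(E_j) = Per(E_j)$ bounded below by the isoperimetric inequality, so the perimeters cannot converge to $0$. The paper handles this by allowing $E$ to meet $\partial\Omega$, requiring only $\mathcal H^{n-1}(\partial E \cap \partial\Omega) = 0$, and defining $d(x)$ as the distance to $\partial E$ for $x \in E \cap \Omega$ (and $0$ otherwise). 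With that modification your construction goes through unchanged.
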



\section{Existence, Optimal Regularity, and Gamma Convergence}

This section contains the proofs of Theorem \ref{ER} and Theorem \ref{T4}.
The existence of a minimizer is achieved by standard methods in the calculus of variations, and we only sketch the proof. For simplicity we assume $\Omega= B_1$ and, given a boundary data $\phi\in H^1(B_1)$, $\phi \geq 0$, we set:
$$\mathcal A:= \{u \in H^1(B) \ : \ u\geq 0, \quad u=\phi \quad \text{on $\p B_1$}\}.$$

\begin{prop}\label{ex}There exists a minimizer $u \in \mathcal A$ to $J$ in $B_1$.
\end{prop}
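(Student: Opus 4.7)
The plan is to apply the direct method of the calculus of variations. I assume $m := \inf_{\mathcal{A}} J$ is finite, since otherwise every element of $\mathcal{A}$ is trivially a minimizer, and fix a minimizing sequence $\{u_k\} \subset \mathcal{A}$ with $J(u_k) \to m$.

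For compactness, the bound $\int_{B_1} |\nabla u_k|^2 \le 2 J(u_k) \le 2(m+1)$ together with $u_k - \phi \in H^1_0(B_1)$ and Poincar\'e's inequality yield a uniform $H^1$ bound on the sequence. By weak compactness in $H^1$ and the Rellich--Kondrachov theorem, a subsequence (still denoted $u_k$) converges to some $u \in H^1(B_1)$ weakly in $H^1$, strongly in $L^2$, and pointwise a.e. The a.e.\ limit preserves $u \ge 0$, and weak continuity of the trace operator gives $u|_{\partial B_1} = \phi$, so $u \in \mathcal{A}$.

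It remains to establish the lower semicontinuity $J(u) \le \liminf_k J(u_k)$. The Dirichlet part is weakly lower semicontinuous in the standard way. For the potential term, the key observation is that $W \ge 0$ and $W$ is lower semicontinuous on $[0,\infty)$: it is continuous on $(0,\infty)$, and at the origin $\liminf_{t \to 0^+} W(t) = +\infty \ge W(0) = 0$. Pointwise a.e.\ convergence of $u_k$ to $u$ therefore yields $W(u(x)) \le \liminf_k W(u_k(x))$ for a.e.\ $x$, and Fatou's lemma applied to the nonnegative integrands gives
\[
\int_{B_1} W(u)\,dx \le \liminf_k \int_{B_1} W(u_k)\,dx.
\]
Adding the two lower semicontinuity statements produces $J(u) \le m$, so $u$ is a minimizer.

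The only delicate point is the potential term, because $W$ has a singularity at $0^+$ and one might fear that as $u_k$ passes through small positive values the potential energy could be ``lost'' in the limit. The observation that $W$ is nonetheless globally lower semicontinuous on $[0,\infty)$ under the convention $W(0)=0$ is precisely what permits a direct appeal to Fatou, bypassing any truncation or mollification argument. No further structure of $W$ beyond nonnegativity and this one-sided continuity is used, so the argument is robust and does not depend on the specific value of $\gamma \in (0,2)$.
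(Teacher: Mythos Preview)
Your proof is correct and mirrors the paper's: direct method, weak $H^1$ compactness, and lower semicontinuity of the potential term via the pointwise lower semicontinuity of $W$ on $[0,\infty)$ together with Fatou. The only addition in the paper is an explicit finite-energy competitor $w=\phi_h+(1-|x|)^\alpha$ (with $\phi_h$ the harmonic replacement of $\phi$, so that $w^{-\gamma}\le(1-|x|)^{-\gamma\alpha}$ is integrable since $\gamma\alpha<1$), which establishes $\inf_{\mathcal A}J<+\infty$ rather than dismissing the case $m=+\infty$ as trivial; this is worth including, since a minimizer with \emph{finite} energy is what the rest of the paper actually uses.
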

\begin{proof} Let $\phi_h \geq 0$ be the harmonic replacement of 
$\phi$ in $B_1$ and set
$$w:= \phi_h + (1-|x|)^\alpha.$$ Recall that $\alpha=2/(\gamma+2).$
Then, since $$w^{-\gamma} \leq (1-|x|)^{-\gamma \alpha},$$ and $\gamma \in (0,2)$, we have that $J(w)<+\infty.$ Thus,
$0 \leq \inf_{\mathcal A} J < +\infty.$ Let $u_n \in \mathcal A$ be a minimizing sequence. Then, up to extracting a subsequence, 
$$u_n \rightarrow \bar u \quad \text{weakly in $H^1(B_1),$ strongly in $L^2(B_1)$, and almost everywhere in $B_1$.}$$ Moreover, it is immediate to show that at all points $x$ where $u_n(x) \to u(x)$, then 
$$W(\bar u) \leq \liminf_{n \to \infty}  W(u_n),$$ which together with Fatou's lemma gives that
$$J(\bar u) \leq \lim_n J(u_n),$$ and the proof is complete.
\end{proof}

Next we prove the optimal regularity result.

\begin{prop}\label{opt1} $u \in C^{\alpha}(B_1)$, with norm in $B_{1/2}$ bounded by a constant $C>0$ depending on $n,\|\phi\|_{H^1}.$

\end{prop}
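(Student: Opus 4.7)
The strategy is to prove H\"older regularity in three steps: (a) a local $L^\infty$ bound depending only on $n$ and $\|\phi\|_{H^1}$; (b) the optimal growth estimate $u(x_0) \le C\,\mathrm{dist}(x_0, F(u))^\alpha$ for $x_0 \in \{u>0\}$; and (c) a standard dichotomy combining (b) with interior regularity of $u$ in its positivity set.

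For (a), the competitor $w = \phi_h + (1-|x|)^\alpha$ from Proposition \ref{ex}, together with minimality, already gives the a priori energy control $J(u) \le C(n, \|\phi\|_{H^1})$. To upgrade this to $L^\infty$, I would run a Stampacchia iteration on the super-level sets $A_k := \{u > \phi_h + k\}$, comparing $u$ with the truncation $\min(u, \phi_h + k)$. The harmonicity of $\phi_h$ kills the cross terms $\int \nabla(u-\phi_h-k)^+ \cdot \nabla \phi_h$ in the resulting energy inequality, while $W(u) \le k^{-\gamma}/\gamma$ on $A_k$; Sobolev embedding then yields a self-improving decay $|A_k| \le C(k-k_0)^{-p}|A_{k_0}|^q$ with $q>1$, which by Stampacchia's lemma forces $\sup_{B_{3/4}} u \le C(n, \|\phi\|_{H^1})$.

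For (b), I would argue by contradiction. Assume $u(x_0) = K d^\alpha$ with $K \gg 1$ and $d = \mathrm{dist}(x_0, F(u)) \wedge \tfrac{1}{8}$, and compare $u$ on a ball $B_\rho$ of radius $\rho \sim d$ with a radial barrier $\Psi$ modeled on the explicit profile $c_0 t^\alpha$. A scaling heuristic makes the mechanism transparent: the Dirichlet energy of $u$ over $B_\rho$ is of order $K^2 d^{\,n+2\alpha-2}$, while the potential cost of replacing $u$ by $\min(u, \Psi)$ is only of order $d^{\,n-\alpha\gamma} = d^{\,n+2\alpha-2}$ (using $\alpha(\gamma+2)=2$); thus for $K$ large the Dirichlet gain beats the potential cost by the factor $K^2$, producing an admissible competitor with strictly smaller energy and contradicting the local minimality of $u$. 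The $C^\alpha$ estimate in (c) then follows by a standard dichotomy based on whether $|x-y|$ is smaller or larger than $\tfrac12\max(\mathrm{dist}(x, F(u)), \mathrm{dist}(y, F(u)))$: points close to $F(u)$ are handled directly by the growth bound together with $u|_{F(u)} = 0$, while for points far from $F(u)$ interior estimates for the equation $\Delta u = -u^{-(\gamma+1)}$, combined with the $\alpha$-homogeneous scaling invariance, yield $|\nabla u(x_0)| \le C\,\mathrm{dist}(x_0, F(u))^{\alpha-1}$.

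The principal obstacle is the growth estimate in (b). In the positive-exponent Alt--Phillips setting one has $\alpha \ge 1$ and the free boundary condition can be read off from first variations, whereas here $\alpha < 1$ so the homogeneous profile has infinite gradient at $F(u)$ and no classical domain variation detects the relevant free boundary condition. The barrier $\Psi$ must therefore be engineered to extract the full $K^2$ Dirichlet-versus-potential gap identified by the scaling, and its geometry has to respect the rigidity of $c_0 t^\alpha$ near its own zero set.
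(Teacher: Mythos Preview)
Your three-step template ($L^\infty$ bound, growth from $F(u)$, interior dichotomy) is the standard free-boundary pattern, but it is \emph{not} what the paper does, and step (b) as you sketch it has a real gap.

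The paper's proof is a pure Campanato argument on the averaged Dirichlet energy
\[
a(r)=r^{1-\alpha}\Big(\fint_{B_r}|\nabla u|^2\Big)^{1/2}.
\]
The mechanism is: if $a(1)\ge M$, normalize so that $a(1)=1$; the potential term then carries a small factor $\eps$, and comparing with the competitor $\phi_h+\eps(1-|x|)^\alpha$ forces $\int|\nabla(u-\phi_h)|^2\le C\eps$. Subharmonicity of $|\nabla\phi_h|^2$ then gives $a(\rho)\le\tfrac12$ for a fixed small $\rho$, and Campanato yields $C^\alpha$. No $L^\infty$ bound, no free boundary, no barrier is needed; the whole argument lives at the level of the $H^1$ function. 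The growth estimate $u\le C\,\mathrm{dist}(\cdot,F(u))^\alpha$ with \emph{universal} constant is proved only later (Lemma~\ref{opt}), and its proof uses the viscosity framework, which in turn relies on the continuity already established by Proposition~\ref{opt1}.

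Your route has two problems. First, a circularity: before continuity is known, $F(u)$ and $\mathrm{dist}(x_0,F(u))$ are not well defined for an $H^1$ function, so step (b) cannot even be stated cleanly without additional input. Second, and more seriously, the energy heuristic in (b) does not close. Your lower bound ``the Dirichlet energy of $u$ over $B_\rho$ is of order $K^2 d^{\,n+2\alpha-2}$'' needs $u$ to vanish on a set of positive capacity near the free boundary point, not just at a single point $y\in F(u)$; density of $\{u=0\}$ is a \emph{conclusion} of the theory, not an assumption you can invoke here. Without that, there is no Poincar\'e-type inequality to harvest, and the competitor $\min(u,\Psi)$ may save essentially no Dirichlet energy while (as you note) paying a potential cost since $W$ is decreasing. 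The paper avoids this entirely: the comparison is with the harmonic replacement, which always strictly decreases Dirichlet energy and whose potential cost is made small by the scaling that pushes $\eps\to0$. In short, the paper's Campanato argument is both the right organization and the right tool; your step (b) would need a genuinely different idea (essentially the viscosity barrier of Section~4) to be made rigorous, and that idea requires continuity first.
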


In order to prove Proposition \ref{opt1}, we
denote by $$a(r):= r^{1-\alpha}\left(\fint_{B_r} |\nabla u|^2 dx\right)^{1/2}.$$ It is in fact  enough to obtain the next lemma, as the desired Proposition \ref{opt1} will then follow by standard Campanato estimates.

\begin{lem} There exist constants $M, \rho>0$ (depending on $n$) such that if $a(1) \geq M$ then $$a(\rho) \leq \frac 1 2  a(1).$$ In particular,
$$a(r) \leq C, \quad \forall r\leq 1,$$ for some constant $C$ depending on $\phi.$
\end{lem}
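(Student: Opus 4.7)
The plan is to compare $u$ in $B_1$ with its harmonic replacement $\tilde u$ (harmonic in $B_1$ with $\tilde u = u$ on $\partial B_1$) and then transfer the harmonic decay of $a$ to $u$. For harmonic functions the quantity $a(r)$ decays thanks to the prefactor $r^{1-\alpha}$, since $|\nabla\tilde u|^2$ is subharmonic and $\alpha<1$. The strategy is to use minimality to show that $u$ is close to $\tilde u$ in $H^1$ up to a universal amount; this error becomes negligible compared to $a(1)^2$ once $a(1)\geq M$ is sufficiently large, which gives the decay.

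The first step is to establish the universal comparison estimate
$$\int_{B_1}|\nabla(u-\tilde u)|^2\, dx \leq C(n,\gamma).$$
The obvious competitor $\tilde u$ is inadmissible because $W(\tilde u)$ blows up where $\tilde u$ vanishes, so one instead takes $v := \tilde u + (1-|x|)^{\alpha}$, which matches $u$ on $\partial B_1$, is strictly positive inside, and is nonnegative since $\tilde u\geq 0$ by the maximum principle. The energy of $v$ is universally bounded: $W(v)\leq \tfrac{1}{\gamma}(1-|x|)^{-\alpha\gamma}$ with $\alpha\gamma = \tfrac{2\gamma}{\gamma+2}<1$ keeps $\int_{B_1}W(v)$ finite, while the bump's gradient behaves like $(1-|x|)^{\alpha-1}$ with $2\alpha-2>-1$, making its Dirichlet part finite as well. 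Minimality gives $J(u)\leq J(v)$, and the cross term $\int \nabla\tilde u\cdot \nabla[(1-|x|)^\alpha]$ vanishes by integration by parts (harmonicity of $\tilde u$ together with vanishing of the bump on $\partial B_1$), producing the orthogonality $\int|\nabla v|^2 = \int|\nabla\tilde u|^2 + \int|\nabla(v-\tilde u)|^2$. Combined with the standard harmonic replacement identity $\int|\nabla u|^2 - \int|\nabla\tilde u|^2 = \int|\nabla(u-\tilde u)|^2$, this delivers the claim.

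Next, for $\rho\leq 1/2$, subharmonicity of $|\nabla\tilde u|^2$ gives $\fint_{B_\rho}|\nabla\tilde u|^2 \leq C\fint_{B_1}|\nabla\tilde u|^2 \leq C\, a(1)^2$, while the first step gives $\fint_{B_\rho}|\nabla(u-\tilde u)|^2 \leq C\rho^{-n}$. Therefore
$$a(\rho)^2 \leq 2\rho^{2-2\alpha}\left(\fint_{B_\rho}|\nabla\tilde u|^2 + \fint_{B_\rho}|\nabla(u-\tilde u)|^2\right) \leq C\rho^{2-2\alpha}\, a(1)^2 + C\rho^{2-2\alpha-n}.$$
Since $\alpha<1$, one first chooses $\rho$ small so that $C\rho^{2-2\alpha}\leq 1/8$; then, with $\rho$ fixed, one picks $M$ large enough that $a(1)\geq M$ forces $C\rho^{2-2\alpha-n}\leq a(1)^2/8$. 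Adding the two contributions gives $a(\rho)^2\leq a(1)^2/4$, i.e.\ $a(\rho)\leq a(1)/2$.

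The main obstacle is the singular nature of $W$: the natural harmonic competitor has infinite potential energy, and the positive bump $(1-|x|)^\alpha$ must be tuned so that both its gradient and its potential are simultaneously integrable --- this works precisely because of the restriction $\gamma<2$, which guarantees both $2\alpha-2>-1$ and $\alpha\gamma<1$. The concluding uniform bound $a(r)\leq C$ follows by iterating the decay at dyadic scales $\rho^k$ (using the $\alpha$-homogeneous rescaling $u_\lambda(x)=\lambda^{-\alpha}u(\lambda x)$, which is again a minimizer), and on scales where $a$ has already dropped below $M$ the same one-step estimate keeps $a$ uniformly bounded.
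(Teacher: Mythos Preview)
Your proof is correct and follows essentially the same strategy as the paper: compare $u$ with the competitor $\tilde u + (1-|x|)^\alpha$ (harmonic replacement plus a positive bump), use the orthogonality from harmonicity to bound $\|\nabla(u-\tilde u)\|_{L^2}$ by a universal constant, and combine this with subharmonicity of $|\nabla\tilde u|^2$ to get the decay of $a(\rho)$. The only cosmetic difference is that the paper first normalizes by multiplying $u$ so that $a(1)=1$, which introduces a small coefficient $\eps$ in front of the potential and phrases the error as $o(1)$ in $\eps$; your version keeps $a(1)$ explicit and absorbs the universal error into the largeness of $M$, which is arguably cleaner and sidesteps having to tune the bump's amplitude to the $\eps$.
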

\begin{proof} Without loss of generality, after a multiplication by a constant, we can assume that $a(1)=1$ and $u$ minimizes 
$$\int_{B_1} \left(\frac 1 2 |\nabla u|^2 + \eps u^{-\gamma} \chi_{\{u>0\}}\right) \, dx ,$$ with $\eps$ small to be made precise later. Thus, using the competitor $w:= \phi_h + \eps (1-|x|)^\alpha$ as in the proof of Proposition \ref{ex}, we conclude that (for $C>0$ depending only on $n$ and changing from line to line),
$$\int_{B_1} |\nabla u|^2 dx \leq \int_{B_1} |\nabla w|^2 dx + C\eps,$$ with 
$$\int_{B_1} |\nabla \phi_h|^2 dx \leq 1= \int_{B_1} |\nabla u|^2 \,dx.$$This yields,
$$\int_{B_1} |\nabla u|^2 dx \leq \int_{B_1} |\nabla \phi_h|^2 dx + C\eps,$$ and using that $\phi_h$ is the harmonic replacement of $u$,
$$\int_{B_1} |\nabla (u- \phi_h)|^2 dx \leq C\eps.$$ Since $|\nabla \phi_h|^2$ is subharmonic, this implies that $$|\nabla \phi_h|^2 \leq C, \quad \text{in $B_{3/4}$.}$$ Therefore, the last two inequalities lead to 
$$\fint_{B_\rho} |\nabla u|^2 dx \leq C(\eps \rho^{-n} + 1), $$ and hence

$$a(\rho) \leq \frac 1 2,$$ as long as $\rho, \eps=\eps(\rho),$ are sufficiently small.
\end{proof}

We also state a simple energy bound for minimizers.

\begin{lem}\label{EB}
Assume $u$ is a minimizer for $J$ in $B_1$. Then
$$J(u,B_{1/2}) \le C(\|u\|_{L^\infty(B_1)}).$$
\end{lem}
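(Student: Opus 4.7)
The strategy is to bound the two parts of the energy, $\int_{B_{1/2}} W(u)$ and $\int_{B_{1/2}} |\nabla u|^2$, separately, using two different admissible variations around $u$.

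First I will bound the potential. Compare $u$ with the competitor $v := \max(u,\phi)$, where $\phi(x) := (1-|x|^2)^\alpha$ is the explicit profile on $B_1$ that vanishes on the boundary. The exponent $\alpha = 2/(\gamma+2)$ is tuned precisely so that both $\int_{B_1} W(\phi) \sim \int (1-|x|^2)^{-\alpha\gamma}\, dx$ and $\int_{B_1} |\nabla\phi|^2 \sim \int (1-|x|^2)^{2\alpha-2}\, dx$ are finite with constants depending only on $n,\gamma$; both are borderline-integrable, converging precisely because $\gamma<2$ (and blowing up as $\gamma \to 2$, consistent with the rescaling factor $c_\gamma$ in Theorem \ref{T4}). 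Since $v=u$ on $\partial B_1$, the minimality inequality $J(u) \le J(v)$ cancels the contributions on $\{u \ge \phi\}$ and reduces to $\int_{\{u<\phi\}} W(u) + \frac 1 2 \int_{\{u<\phi\}} |\nabla u|^2 \le C(n,\gamma)$. On the complementary set, the monotonicity of $W$ forces $W(u) \le W(\phi)$, and combining yields $\int_{B_1} W(u) \le C(n,\gamma)$, with no dependence on $\|u\|_{L^\infty(B_1)}$.

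Second I will bound the Dirichlet term using the scaling variation $u_t := (1+t\eta^2)u$, where $\eta \in C_c^\infty(B_{3/4})$ is a fixed cutoff with $\eta \equiv 1$ on $B_{1/2}$ and $|\nabla\eta| \le C(n)$. For $|t|$ small, $u_t \ge 0$ and $u_t = u$ on $\partial B_1$, so $t \mapsto J(u_t)$ is minimized at $t=0$. Differentiating there produces the Caccioppoli-type identity
$$\int \eta^2 |\nabla u|^2 + 2 \int u\,\eta\,\nabla\eta \cdot \nabla u \;=\; \gamma \int \eta^2\, W(u),$$
and Young's inequality absorbs the cross term to give
$$\int_{B_{1/2}} |\nabla u|^2 \;\le\; 2\gamma \int_{B_{3/4}} W(u) + C(n)\,\|u\|_{L^\infty(B_1)}^2 \;\le\; C(n,\gamma)\bigl(1 + \|u\|_{L^\infty(B_1)}^2\bigr)$$
via the first step. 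Combining the two bounds yields $J(u,B_{1/2}) \le C(\|u\|_{L^\infty(B_1)})$, as claimed.

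The only delicate point is the justification of $\frac{d}{dt}\big|_{t=0} J(u_t)$, because $W$ is singular at $0$. The saving observation is that $u_t$ has exactly the same positivity set as $u$ for $|t| < 1/2$, and on $\{u>0\}$ one has the exact scaling $W(u_t) = (1+t\eta^2)^{-\gamma} W(u)$, a uniformly bounded multiplicative perturbation of $W(u)$. Once $\int W(u) < \infty$ is known from the first step, dominated convergence justifies differentiating under the integral sign, and the expected first-order identity holds. This is the reason the two steps must be carried out in this order.
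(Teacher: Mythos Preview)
Your proof is correct but takes a genuinely different route from the paper's. The paper uses a single competitor: it sets $\varphi(x) = M[(|x|-\tfrac12)^+]^\alpha$ with $M$ a large multiple of $\|u\|_{L^\infty(B_1)}$, so that $\varphi \equiv 0$ on $B_{1/2}$ and $\varphi > u$ near $\partial B_1$. Then the set $\{\varphi \le u\}$ contains $B_{1/2}$, is compactly contained in $B_1$, and $u = \varphi$ on its boundary, so minimality gives in one stroke
\[
J(u, B_{1/2}) \le J(u, \{\varphi \le u\}) \le J(\varphi, \{\varphi \le u\}) \le J(\varphi, B_1) \le C(M).
\]

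Your argument instead separates the two pieces of the energy: you bound $\int_{B_1} W(u)$ by comparing with $\max(u, (1-|x|^2)^\alpha)$ and invoking the monotonicity of $W$, and then bound $\int_{B_{1/2}} |\nabla u|^2$ via the first variation under the multiplicative perturbation $u_t = (1+t\eta^2)u$. Both steps are valid; the choice $u_t$ is well-suited here precisely because $\{u_t>0\}=\{u>0\}$ and $W(u_t)=(1+t\eta^2)^{-\gamma}W(u)$, which sidesteps the singularity of $W'$. A minor bonus of your approach is the intermediate fact $\int_{B_1} W(u) \le C(n,\gamma)$ with no dependence on $\|u\|_{L^\infty}$, which the paper's one-shot comparison does not isolate. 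One small correction to your closing paragraph: the differentiability of $t \mapsto J(u_t)$ at $t=0$ only needs $\int W(u) < \infty$ qualitatively, and this is already implicit in $u$ being a minimizer (so $J(u,B_1)<\infty$). The real reason Step 1 must precede Step 2 is that you need the \emph{quantitative} estimate $\int_{B_{3/4}} W(u) \le C(n,\gamma)$ to turn the Caccioppoli identity into a useful bound.
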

\begin{proof}
Let $\varphi(x)=M [(|x|-\frac 12)^+]^\alpha$ with $M= C \|u\|_{L^\infty}$ so that $\varphi > \|u\|_{L^\infty}$ near $\p B_1$. Then
$$J(u,B_{1/2}) \le J(u, \{ \varphi \le u\} ) \le J(\varphi, \{ \varphi \le u\} )  \le J(\varphi, B_1 ) \le C(M).$$
\end{proof}

Next we discuss the one-dimensional case, which is the motivation for our definition of viscosity solution in the next section.

\subsection{The one-dimensional case} \label{the1d}

Assume that $u:[0, \delta] \to \R^+$ solves the ODE
$$ u''= - u^{-(\gamma +1)} \quad \mbox {in $(0,\delta)$, and $u(0)=0$}.$$
We multiply the equation by $u'$ and integrate, and deduce that in $(0,\delta)$
$$u'^2 - \frac 2 \gamma u^{-\gamma}= \mu,$$
for some constant $\mu \in \R.$ 
We rewrite the equation as,
$$ \frac{d}{dt} G(u(t)) =1,$$
where
$$G(s):=\int_0^s \left(\mu+ \frac 2 \gamma r^{-\gamma} \right )^{-\frac 12} dr.$$
Then,
$$G(s)= \sqrt{\frac{\gamma}{2}}\int_0^s(r^{\gamma/2}-\frac{\mu \gamma}{2}r^{3\gamma/2}) + O(r ^{5\gamma/2}) \quad dr ,$$ hence
$$G(s)=  \sqrt{\frac{\gamma}{2}} \left ( \alpha s^{\gamma/2 +1}-\frac{\mu\gamma}{3\gamma+2}s^{3\gamma/2 +1} \right)+ O(s ^{5\gamma/2+1}).$$
We can compute the inverse of $G$ near $0$ and, after a simple computation obtain
\begin{equation}\label{oned}
u(t)=G^{-1}(t)=c_0 t^\alpha +  \mu c_1 t^{2-\alpha} + O(t^\sigma), \quad \quad \sigma > 2-\alpha,
\end{equation}
with $\alpha = 2/(2+\gamma)$ and positive constants $c_0$, $c_1$ and $\sigma$ depending only on $\gamma$.

Assume further that the extension of $u$ by $0$ on the negative axis is a minimizer of $J$ in the interval $[-\delta,\delta]$. Then we show that $\mu=0$ and $u=u_0=c_0 (t^+)^\alpha$ is the explicit $\alpha$-homogenous solution. 

For this we compare $u$ with infinitesimal dilations with the same boundary data
$$ u_\lambda(t) := u(\delta + \lambda(t-\delta) ) $$
and $\lambda$ close to 1. Then
$$J(u_\lambda,[-\delta,\delta])= \int_{-\delta}^\delta \left(\frac \lambda 2 (u')^2 + \frac{1}{\lambda \gamma} u^{-\gamma}\right) dt$$ 
is minimal when $\lambda=1$ which means $(u')^2=(2/\gamma) u^{-\gamma}$, and that gives $\mu=0$. 

Alternatively, we could use Cauchy-Schwartz inequality and write
$$ J(u,[-\delta,\delta]) \ge (2 \gamma)^{-1/2} \int_{-\delta}^\delta u^{-\gamma/2} u' dt,$$
and the right hand side depends only on the values of $u$ at the end points. The equality occurs when $(u')^2=(2/\gamma) u^{-\gamma}$, i.e. $\mu=0$.

\subsection{The $\Gamma$ convergence as $\gamma \to 2$.} 

This last argument based on the Cauchy-Schwartz inequality can be used in higher dimensions to deduce the convergence result as $\gamma \to 2$. 

\begin{proof}[Proof of Theorem \ref{T4}]

We recall that the space of nonnegative integrable functions is denoted by
$$X=\{u: \Omega \to [0, \infty)| \quad u \in L^1(\Omega)\},$$ and is equipped with the distance
$$ d_X(u,v):=\|u-v\|_{L^1} + \|\chi_{\{u>0\}} -  \chi_{\{v>0\}}\|_{L^1}.$$
The proof is similar to the classical Modica-Mortola argument \cite{MM} for the Ginzburg-Landau functional. Notice that by Cauchy-Schwartz inequality
\begin{equation}\label{GC}
\mathcal J_\gamma(u) \ge (1-\frac \gamma 2)\int_\Omega u^{-\gamma/2 }|\nabla u| \, dx = \int_\Omega |\nabla u^{1-\gamma/2}| \, dx.
\end{equation}
If $u_n \to u$ in $X$ then $u_n^{1-\gamma_n/2} \to \chi_{\{u>0\}}$  in $L^1 (\Omega)$ and part a) follows from the lower semicontinuity of the $BV$-norm. 

For part b) let $\tilde u$ be a smooth function that approximates $u$ and let $E \subset \R^n$ be a set with smooth boundary which approximates $\{u>0\}$ in $\Omega$ (see \cite{M}), in the sense that $\|u-\tilde u\|_{L^1} \le \eps$, and
$$Per_\Omega (E) \le Per_\Omega(\{u>0\}) + \eps, \quad \|\chi_{E \cap \Omega} - \chi_{\{u>0\}}\|_{L^1} \le \eps, $$
$$\mathcal H^{n-1}(\partial E \cap \partial \Omega) =0, \quad \int_{\{u>0\} \setminus E} |u| \, dx \le \eps.$$
For $x \in \Omega$ we let $d(x)$ denote the distance to $\p E$ when $x \in E \cap \Omega$, and extend $d(x)=0$ when $x \in \Omega \setminus E$. 
We take $\delta$ small, such that $$Per_\Omega (\{d > s\}) \le Per_\Omega (E) + \eps \quad \mbox{ for} \quad  s \in [0,c_0 \delta^\alpha].$$ 

Define $$w(x):= c_0 \min \{ d(x), \delta\}^ \alpha, \quad v:= w + \varphi(d) \tilde u,$$
where $\varphi$ is a smooth function with $\varphi(s) =0$ if $s \le \delta$, $\varphi(s)=1$ if $s \ge 2 \delta$. 
Notice that $\{w>0\}=E$ and
$$\int_\Omega|v-u| dx \le \eps + |\Omega| \cdot \|w\|_{L^\infty} + \int_{\{0<d<2\delta\} }|u| + |\tilde u| dx,$$
which can be made arbitrarily small provided that first $\eps$ and then $\delta$ are chosen sufficiently small. 

We have $v=w$ in the set
$$D:=\{d< \delta\},$$
  while $|v|+|\nabla v| \le C (\delta, \tilde u)$ in the set $\Omega \setminus D$. If $\gamma_n \to 2$ then
$$ (2-\gamma_n) J (v, \Omega \setminus D) \to 0,$$
hence
 \begin{equation}\label{OO}
 \mathcal J_{\gamma_n}(v, \Omega)= \mathcal J_{\gamma_n}(w, D) + o(1).
 \end{equation}
The inequality \eqref{GC} is an equality for $w$ in the domain $D$:
$$\mathcal J_{\gamma_n}(w, D) = \int_{D} |\nabla w^{1-\gamma_n/2}|= \int_0^{a_\gamma} Per (\{w^{1-\gamma_n/2}>s\})ds   $$
with $a_\gamma=(c_0 \delta^\alpha)^{1-\gamma_n/2} \to 1$ as $n \to \infty$. In conclusion
$$\mathcal J_{\gamma_n}(w, D) \le Per _\Omega(E) + \eps + o(1),$$
which together with \eqref{OO} gives the desired statement.

\end{proof}

\section{Minimizers as Viscosity Solutions} 

In this section we consider the following degenerate one-phase ($u \geq 0$) free boundary problem:
\begin{equation}\label{v}\begin{cases}\Delta u = -u^{-(\gamma+1)} \quad \text{in $\{u>0\} \cap B_1,$}\\
u(x_0 + t\nu)= c_0 t^{\alpha} + o(t^{2-\alpha}) \quad \text{on $F(u):=\p \{u>0\} \cap B_1,$}
\end{cases}\end{equation}
with $t \geq 0$, $\nu$ the unit normal to $F(u)$ at $x_0$ pointing towards $\{u>0\}$, and 
\begin{equation}\label{a}
\alpha:= \frac{2}{\gamma+2}, \quad \quad c_0 := [\alpha(1-\alpha)]^{-\frac {1}{\gamma+2}}, \quad \gamma \in (0,2), \quad \alpha \in ( \frac 12,1).
\end{equation}

We start by introducing the notion of viscosity solution to \eqref{v}. As usual, we say that a continuous function $u$ touches a continuous function $\phi$ by above (resp. below) at a point $x_0$ if 
$$u \geq \phi \ \text{(resp. $u\leq \phi $)}\quad \text{in a neighborhood of $x_0$}, \quad u(x_0)=\phi(x_0).$$ Typically, if the inequality is strict (except at $x_0$), we say that $u$ touches $\phi$ strictly by above (resp. below). In our context, with $\phi \geq 0$, when we say that $u$ touches $\phi$ strictly by above at $x_0$, we mean that $u \geq \phi$ in a neighborhood $B$ of $x_0$ and $u>\phi$ (except at $x_0$) in $B \cap \overline{\{\phi>0\}}$ (and similarly by below we require the inequality to be strict in a neighborhood of $x_0$ intersected $\overline{\{u>0\}}$).

We now consider the class $\mathcal C^+$ of continuous functions $\phi$ vanishing on the boundary of a ball $B:=B_R(z_0)$ and positive in $B$, such that $\phi (x)= \phi( |x-z_0|)$ in $B$ and $\phi$ is extended to be zero outside $B$. We denote by $d(x):= dist(x, \p B)$ for $x$ in $B$ and $0$ otherwise. 
Similarly we can define the class $\mathcal C^-$, with $\phi$ being zero in the ball and positive outside, and $d(x):= dist(x, \p B)$ for $x \in B^c$ and $0$ otherwise.

\begin{defn}\label{defn} We say that a non-negative continuous function $u$ satisfies \eqref{v} in the viscosity sense, if 

1) in the set where $u>0$, $u$ is $C^\infty$ and satisfies the equation in a classical sense; 

2) if $x_0 \in F(u):= \p \{u>0\} \cap B_1$, then $u$ cannot touch $\psi \in \mathcal C^+$ (resp. $\mathcal C^-$) by above (resp. below) at $x_0$, with $$\psi(x):= c_0 d(x)^\alpha + \mu \, d(x)^{2-\alpha},$$ $\alpha, c_0$ as in \eqref{a} and $\mu>0$ (resp $\mu<0$).
\end{defn}

Next we show that a barrier as in the definition above can be modified so that it is a subsolution (supersolution) of the interior equation, and the touching is strict. 

\begin{lem}\label{strict} Let $u$ be a non-negative continuous function in $B_1$, such that $u$ touches $\psi \in \mathcal C^+$ by above at $x_0 \in F(u)$, and $$\psi:= c_0 d(x)^\alpha + \mu \,  d(x)^\beta,$$ with $\alpha, c_0$ as in \eqref{a}, and $\mu>0$. Then $u$ touches $\phi \in \mathcal C^+$ strictly by above at $x_0$, with 
\begin{equation}\label{p}\phi:= c_0 d^\alpha + \frac{\mu}{2} d^{2-\alpha} + d^\sigma, \quad 2-\alpha< \sigma <4 - 3 \alpha,
\end{equation} and for $d_0>0$ small, 
\begin{equation}\label{sub}\Delta \phi > -\phi^{-(\gamma+1)}, \quad \text{in the annulus $0< d(x) < d_0$}.\end{equation}
\end{lem}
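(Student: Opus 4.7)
The plan is to handle the two conclusions separately. The strict touching is direct: since $\sigma > 2-\alpha$,
\[
\psi - \phi \;=\; \tfrac{\mu}{2} d^{2-\alpha} - d^\sigma \;=\; d^{2-\alpha}\bigl(\tfrac{\mu}{2} - d^{\sigma - (2-\alpha)}\bigr) \;>\; 0
\]
for all $0 < d < d_0$ once $d_0$ is chosen small. Combined with $\psi(x_0) = \phi(x_0) = u(x_0) = 0$ and $u \geq \psi$ in a neighborhood of $x_0$, this yields $u > \phi$ on the punctured neighborhood of $x_0$ in $\overline{\{\phi>0\}}$, i.e.\ strict touching by above.

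For the subsolution inequality I would use the radial Laplacian formula $\Delta d^p = p(p-1) d^{p-2} - \frac{(n-1)p}{r} d^{p-1}$ with $r = |x-z_0|$, and the factorisation
\[
\phi^{-(\gamma+1)} \;=\; c_0^{-(\gamma+1)} d^{\alpha - 2}\bigl[\,1 - (\gamma+1)\eta + O(\eta^2)\,\bigr], \qquad \eta \;=\; \tfrac{\mu}{2 c_0} d^{2 - 2\alpha} + \tfrac{1}{c_0} d^{\sigma - \alpha}.
\]
The two algebraic identities that follow at once from $\alpha = 2/(\gamma+2)$ and $c_0 = [\alpha(1-\alpha)]^{-1/(\gamma+2)}$, namely
\[
c_0 \alpha (1-\alpha) \;=\; c_0^{-(\gamma+1)}, \qquad (\gamma+1)\alpha \;=\; 2-\alpha,
\]
produce exact cancellations in $\Delta \phi + \phi^{-(\gamma+1)}$ at the two most singular orders $d^{\alpha-2}$ and $d^{-\alpha}$. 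These cancellations reflect the fact that $c_0 d^\alpha$ solves the PDE exactly and that the $\mu\, d^{2-\alpha}$ perturbation solves the linearised equation (the Neumann-type structure highlighted in Section 3).

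After cancellation the leading surviving contribution is
\[
\bigl[\sigma(\sigma - 1) - (2-\alpha)(1-\alpha)\bigr]\, d^{\sigma - 2},
\]
whose coefficient is strictly positive since $\sigma > 2-\alpha > 1$ forces $\sigma - 1 > 1-\alpha > 0$ and hence $\sigma(\sigma-1) > (2-\alpha)(1-\alpha)$. The remaining contributions are of strictly higher order in $d$: the $(n-1)/r$ corrections from the Laplacian generate $d^{\alpha-1}, d^{1-\alpha}, d^{\sigma-1}$ with coefficients bounded as $r \to R$, while the $O(\eta^2)$ error in the binomial expansion produces a term of order $d^{2 - 3\alpha}$, which is precisely where the hypothesis $\sigma < 4 - 3\alpha$ enters, forcing $\sigma - 2 < 2 - 3\alpha$. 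Shrinking $d_0$ then absorbs every error into the positive $d^{\sigma-2}$ term and yields the strict inequality $\Delta \phi > -\phi^{-(\gamma+1)}$ on $\{0 < d < d_0\}$. The main obstacle is the precise verification of the two singular-order cancellations, as they hinge on recognising the algebraic relations among $\alpha, \gamma, c_0$; once those are in hand, what remains is just careful bookkeeping of exponents.
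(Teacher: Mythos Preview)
Your approach is essentially the paper's: the same radial expansion, the same two exact cancellations at orders $d^{\alpha-2}$ and $d^{-\alpha}$ coming from the identities $c_0\alpha(1-\alpha)=c_0^{-(\gamma+1)}$ and $(\gamma+1)\alpha=2-\alpha$, and the same positive $d^{\sigma-2}$ survivor. Two small points to tighten.

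First, your strict-touching argument is incomplete on the closure. You show $\psi>\phi$ whenever $d>0$, hence $u\ge\psi>\phi$ on the \emph{open} ball near $x_0$; but $\overline{\{\phi>0\}}$ includes $\partial B$, and on $\partial B\setminus\{x_0\}$ both $\psi$ and $\phi$ vanish, so from $u\ge\psi$ you only get $u\ge 0=\phi$, not $u>\phi$. The paper's one-line fix is to first replace $B$ by a ball of half the radius, internally tangent at $x_0$; then the new boundary sphere minus $x_0$ lies strictly inside the original ball, where $u\ge\psi>0$, while inside the new ball the new distance satisfies $d_{\mathrm{new}}\le d_{\mathrm{old}}$ so $\phi_{\mathrm{new}}\le\phi(d_{\mathrm{old}})<\psi(d_{\mathrm{old}})\le u$. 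That secures strict inequality on the full closed set.

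Second, your assertion that the curvature correction $d^{\alpha-1}$ is of strictly higher order than $d^{\sigma-2}$ silently uses $\sigma<1+\alpha$. This does not follow from $\sigma<4-3\alpha$ when $\alpha\le 3/4$; without it the negative curvature term $-\tfrac{(n-1)c_0\alpha}{R}d^{\alpha-1}$ would actually dominate. The paper's proof has the same slip (its line ``$\sigma-\alpha<4(1-\alpha)<1$'' needs $\alpha>3/4$). In either argument one simply restricts $\sigma$ further to $(2-\alpha,\min\{4-3\alpha,1+\alpha\})$, which is nonempty since $\alpha>1/2$.
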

\begin{proof} The first part of the claim is obvious after replacing $B$ with a ball of half its radius tangent at $x_0$, given that $\mu>0$ and $d$ is small. In order to prove \eqref{sub}, since $\phi$  is radially symmetric, we need to show that for $d>0$ sufficiently small, 
$$\phi'' - \frac{n-1}{R-d} \phi' > -\phi^{-(1+\gamma)}.$$
Indeed, we need,
$$c_0 \alpha(\alpha-1)d^{\alpha -2} + \frac{\mu}{2}(2-\alpha)(1-\alpha)d^{-\alpha}+  \sigma(\sigma-1)d^{\sigma-2} + O(d^{\alpha-1})> $$
$$ - (c_0 d^{\alpha})^{-(\gamma+1)} \left(1+ \frac{\mu}{2 c_0} d^{2 - 2\alpha} + \frac{1}{c_0} d^{\sigma-\alpha}\right)^{-(\gamma+1)}$$
By our choice of $c_0$ and we get,
$$1 - \frac {\mu}{ 2 c_0} \, \frac {2-\alpha}{\alpha} d^{2-2 \alpha} - \frac{C(\sigma)}{c_0}d^{\sigma -\alpha} + O(d) <$$
$$1- \frac{\mu}{2 c_0} (1+\gamma) d^{2 -2\alpha}-  \frac{1+\gamma}{c_0} d^{\sigma-\alpha} + O(d^{4(1 -\alpha)}) $$
with 
$$
 C(\sigma)=\frac{\sigma(1-\sigma)}{\alpha (1-\alpha)}.$$
The relation between $\alpha$ and $\gamma$ implies the the coefficients of the $d^{2-2\alpha}$ terms are equal, and $\sigma > 2- \alpha$ implies 
$C(\sigma) > 1+\gamma.$ Hence, the desired inequality holds for $d$ small enough, since the upper bound on $\sigma$ gives $\sigma-\alpha < 4(1-\alpha) < 1$.
\end{proof}

We can now prove the following optimal regularity statement.

\begin{lem}\label{opt} Let $u$ be a viscosity solution to \eqref{v} in $B_1$ and assume that $F(u) \cap B_{1/2} \neq \emptyset.$ Then,
\begin{equation}\label{univ}u(x) \leq C dist(x, F(u))^\alpha, \quad \text{in $\{u>0\} \cap B_{1/2},$}\end{equation} for $C=C(n,\alpha)>0.$
\end{lem}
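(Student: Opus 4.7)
The plan is to argue by contradiction: supposing the bound fails, I will construct a $\mathcal C^+$-barrier $\psi$ with strictly positive coefficient of its $d^{2-\alpha}$ term such that $v$ touches $\psi$ from above at a free boundary point, violating Definition \ref{defn}. I begin by assuming there exist $u$ and $x_0 \in \{u>0\}\cap B_{1/2}$ with $u(x_0)/d_0^\alpha$ arbitrarily large, where $d_0 := \mathrm{dist}(x_0, F(u))$, and apply the $\alpha$-homogeneous rescaling $v(y) := d_0^{-\alpha}u(x_0+d_0 y)$, which preserves both the equation and Definition \ref{defn}. This normalizes the setting to: $v$ is a viscosity solution on a large ball around $0$, $v>0$ in $B_1(0)$, $F(v)\cap\partial B_1(0)$ contains a tangent point $y_0$, and $v(0) = M$ can be made arbitrarily large.

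Next I would fix a ball $B \subset \overline{\{v>0\}}$ tangent to $F(v)$ at a point $\tilde y_0$ and, for $\mu > 0$ to be chosen, consider the strict subsolution
\[
\phi(y) := c_0\, d_B(y)^\alpha + \tfrac{\mu}{2}\,d_B(y)^{2-\alpha} + d_B(y)^\sigma,\qquad \sigma\in(2-\alpha,\,4-3\alpha),
\]
furnished by Lemma \ref{strict}, which satisfies $\Delta\phi > -\phi^{-(\gamma+1)}$ in the annulus $A := \{0 < d_B < d^*\}$ for some small $d^*$. Because $t\mapsto -t^{-(\gamma+1)}$ is strictly increasing on $(0,\infty)$, a standard maximum-principle argument applies: a negative interior minimum of $v-\phi$ in $A$ would yield $-v^{-(\gamma+1)} = \Delta v \ge \Delta\phi > -\phi^{-(\gamma+1)}$, hence $v > \phi$ at that point, contradicting the sign of the minimum. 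Therefore $v \ge \phi$ in $\overline A$ will follow once the two boundary inequalities on $\partial A = \partial B \cup \{d_B = d^*\}$ are checked: the outer one, $\phi = 0 \le v$, is automatic; and once the inner one is handled, $v \ge \phi > \psi := c_0 d_B^\alpha + (\mu/4) d_B^{2-\alpha}$ throughout $A$ (since $\phi - \psi = (\mu/4) d_B^{2-\alpha} + d_B^\sigma > 0$), giving the touching of $\psi \in \mathcal C^+$ from above at $\tilde y_0$ with positive coefficient $\mu/4$.

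The hardest step will be checking $v \ge C^* := c_0(d^*)^\alpha + (\mu/2)(d^*)^{2-\alpha}+(d^*)^\sigma$ on the inner sphere $\{d_B = d^*\}$. Taking $B = B_1(0)$ tangent at $y_0$ is delicate because the viscosity expansion $v = c_0 d^\alpha + o(d^{2-\alpha})$ near $y_0$ matches $C^*$ only at leading order. My plan is to exploit the largeness of $M = v(0)$: along the radial direction from $0$ to $y_0$ the PDE reduces, up to the spherical $(n-1)/r$ correction, to the ODE studied in Subsection \ref{the1d}, whose second-order coefficient $\nu$ in the expansion $v = c_0 d^\alpha + \nu c_1 d^{2-\alpha} + \cdots$ must satisfy $\nu \sim M-c_0$ in order that $v(0)=M$. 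For $M$ large, $\nu(d^*)^{2-\alpha}$ comfortably dominates both $(\mu/2)(d^*)^{2-\alpha}$ and $(d^*)^\sigma$, so the inequality $v \ge C^*$ holds at the point of $\{d_B = d^*\}$ closest to $y_0$. To propagate this bound to the full inner sphere, I anticipate a De Giorgi-style point-picking step (yielding $v(y) \le C_n M\,d(y, F(v))^\alpha$ uniformly on a definite region) combined with the interior smoothness of $v$ away from $F(v)$. Making this radial/ODE heuristic rigorous in the non-radial $n$-dimensional setting is the main technical obstacle of the proof.
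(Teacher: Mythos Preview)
Your overall architecture matches the paper's: rescale so that $B_1\subset\{v>0\}$, $v(0)=M$ is large, and a free boundary point $\bar y$ sits on $\partial B_1$; compare $v$ with the strict subsolution $\phi$ of Lemma~\ref{strict} in a thin annulus near $\partial B_1$; conclude $v\ge\phi$ there; and derive a forbidden $\mathcal C^+$-touching at $\bar y$. The maximum-principle step you describe (monotonicity of $t\mapsto -t^{-(\gamma+1)}$) is correct.

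The gap is exactly where you place it: establishing $v\ge C^*$ on the inner sphere $\{d_B=d^*\}$. Your proposed route does not work. Restricting $v$ to the segment from $0$ to $y_0$ does not produce a solution of the ODE in Subsection~\ref{the1d}, so there is no expansion $v=c_0 d^\alpha+\nu c_1 d^{2-\alpha}+\cdots$ available for the $n$-dimensional $v$ near $y_0$, and the relation $\nu\sim M-c_0$ has no rigorous content. The De Giorgi point-picking you invoke would produce an \emph{upper} bound $v\le C_n M\,d(\cdot,F(v))^\alpha$, which is the wrong direction for the inner boundary inequality. You yourself flag this as the ``main technical obstacle,'' and the proposal does not resolve it.

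The idea you are missing is much more elementary than an ODE analysis near the free boundary. Observe that $\Delta v=-v^{-(\gamma+1)}<0$ in $B_1$, so $v$ is superharmonic, and that $\Delta(v-1)^+\ge -1$ (since $v^{-(\gamma+1)}\le 1$ where $v>1$). The sub-mean-value property for $(v-1)^+$ turns $v(0)=M$ into $\int_{B_{1/2}}v\ge c\,M$, and then the super-mean-value property for $v$ gives $v\ge c'\,M$ on $B_{1/4}$; iterating finitely many times yields $v\ge c(d^*)\,M$ on all of $B_{1-d^*}$. Since $\phi$ is bounded on $\{d_B=d^*\}$ by a universal constant, taking $M$ large makes $v\ge\phi$ on the inner sphere automatic. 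This replaces your ODE heuristic entirely and closes the proof.
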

\begin{proof} 
Let $x_0 \in \{u>0\} \cap B_{1/2}$ and let $r:= dist(x_0, F(u))$. Consider the rescaling
$$\tilde u (x): = \frac{u(x_0 + rx)}{r^\alpha},$$
which solves \eqref{v} in $B_{1}$, and let us show that
$$\tilde u(0) \leq M$$
with $M>0$ universal and $B_1 \cap F(\tilde u) = \{\bar x\}$. Assume by contradiction that $\tilde u(0) > M$, with $M$ to be made precise later. Notice that,
$$\Delta \tilde u \leq 0, \quad \Delta (\tilde u-1)^+ \geq -1 \quad \text{in $B_1$}.$$ Thus, by the mean value inequality for subharmonic functions, we get that 
$$\int_{B_{1/2}}(\tilde u -1)^+ \;dx \geq c_1 M.$$
with $c_1$ depending only on $n$. Then
$$\int_{B_{1/2}}\tilde u \;dx\geq c_1 M,$$ and, using that $\tilde u \ge 0$ is superharmonic we have that $\tilde u \ge c_2 M$ in $B_{1/2}$. After iterating this result a few times we find 
$$\tilde u \geq  c_3 M \quad \text{in $B_{1-d_0}$},$$ with $c_3$ depending on $d_0$ and with $d_0$ universal to be specified below.
Now, let $\phi$ be as in equation \eqref{p}, with $\mu=1$. The computation in Lemma \ref{strict} proves that $$\Delta \phi > - \phi^{-(\gamma+1)} \quad \text{in $B_1\setminus \overline{B}_{1-d_0}$},$$ for $d_0$ universal. Moreover, if $M>0$ is large enough universal,
$$u \geq c_3 M \geq \phi \quad \text{on $\p B_1 \cup \p B_{1-d_0}.$}$$ and the maximum principle implies $\tilde u \geq \phi$ in $B_1 \setminus B_{1-d_0}$. On the other hand, the two functions touch at $\bar x$, which contradicts the definition of viscosity solution.
\end{proof}

Next we show that minimizers to \eqref{J} are indeed viscosity solutions.

\begin{prop}\label{min=vis} Let $u$ minimize \eqref{J} in $B_1$. Then $u$ is a viscosity solution to \eqref{v}.
\end{prop}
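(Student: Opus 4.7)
The plan is to handle the two parts of the viscosity notion separately.

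For the interior equation I would observe that on $\{u>0\}$ the function $u$ is locally bounded below away from $0$, so the integrand has smooth dependence on $u$; $u$ is then a local minimizer of a smooth uniformly elliptic functional, and the standard bootstrap (difference quotients followed by Schauder estimates) gives $u\in C^\infty(\{u>0\})$ together with the classical Euler--Lagrange equation $\Delta u = -u^{-(\gamma+1)}$.

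For the free boundary condition I would argue by contradiction, treating only the case of touching from above by $\psi\in\mathcal C^+$ with $\mu>0$ (the $\mathcal C^-$, $\mu<0$ case being symmetric with $\min$ replacing $\max$ and a dual version of Lemma \ref{strict}). Assume such touching at $x_0\in F(u)$. By Lemma \ref{strict}, after passing to a slightly smaller tangent ball $B$, there is a radial barrier $\phi=c_0 d^\alpha+\frac{\mu}{2}d^{2-\alpha}+d^\sigma$ that touches $u$ strictly from above at $x_0$ and is a strict classical subsolution $\Delta\phi+\phi^{-(\gamma+1)}>0$ in a thin annular neighborhood $\{0<d<d_0\}$ of $\partial B$. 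I would then slide: with $\nu$ the inward normal at $x_0$, set $\phi_t(x):=\phi(x+t\nu)$, so that $\{\phi_t>0\}$ is $B$ translated across $x_0$ into the complement of $\{u>0\}$. For small $t>0$, $\phi_t(x_0)\sim c_0 t^\alpha>0=u(x_0)$ while strict touching yields $\phi_t<u$ away from $x_0$, so $A_t:=\{\phi_t>u\}$ is a small open neighborhood of $x_0$ on which $\phi_t$ remains a strict subsolution.

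My candidate competitor would be $v:=\max(u,\phi_t)$, which coincides with $u$ outside $A_t$ (hence is admissible) and satisfies
\[
J(v)-J(u)=\int_{A_t}\!\Bigl[\tfrac{1}{2}(|\nabla\phi_t|^2-|\nabla u|^2)+W(\phi_t)-W(u)\Bigr]\,dx.
\]
Combining the pointwise convexity inequality $\tfrac{1}{2}(|\nabla\phi_t|^2-|\nabla u|^2)\le \nabla\phi_t\cdot\nabla(\phi_t-u)$ with integration by parts on $A_t$ (whose boundary contribution vanishes since $\phi_t-u\equiv 0$ on $\partial A_t$) and with the convexity bound $W(\phi_t)-W(u)\le-\phi_t^{-(\gamma+1)}(\phi_t-u)$ valid on $\{u>0\}$, one finds that the integrand on $A_t\cap\{u>0\}$ is bounded above by $-(\Delta\phi_t+\phi_t^{-(\gamma+1)})(\phi_t-u)<0$. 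This produces a strictly negative bulk gain from the subsolution inequality.

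The step I expect to be the main obstacle is the portion $A_t\cap\{u=0\}$: there the convexity of $W$ at $0$ breaks (since $W$ jumps from $0$ at $t=0$ to $+\infty$ as $t\to 0^+$), and $v=\phi_t$ pays a genuine new positive potential cost that $u$ does not pay. To control it, I would use that $u\ge\phi>0$ in $B$ forces $\{u>0\}\supset B$ locally, so $A_t\cap\{u=0\}$ is confined to the thin crescent $\{\phi_t>0\}\setminus B$; a quantitative estimate then has to show that the volume and potential cost of this crescent are of strictly higher order in $t$ than the bulk subsolution gain captured above. The extra $d^\sigma$ correction in Lemma \ref{strict}, which provides the strict positive slack in $\Delta\phi+\phi^{-(\gamma+1)}$, is precisely what would absorb this bookkeeping and yield $J(v)<J(u)$, contradicting minimality.
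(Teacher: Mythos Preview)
Your approach diverges from the paper's at the crucial step, and the gap you flag as ``the main obstacle'' is genuine and, as stated, unresolved. Both you and the paper use the same competitor $v=\max(u,\phi_t)$, but the paper does \emph{not} compare energies by the direct integration-by-parts plus pointwise convexity of $W$ that you propose. Instead it runs a calibration: it writes $J$ as a surface integral over graphs, foliates the slab between $\Gamma_u$ and $\Gamma_{\phi_{\bar t}}$ by the translates $\phi_t$, builds the associated vector field $V=\nabla_y G(\phi_{t_X},\nu_{\phi_{t_X}})$, and applies the divergence theorem in $\R^{n+1}$. Two ingredients close the argument: (i) $\mathrm{div}\,V=-(\Delta\phi+\phi^{-(\gamma+1)})<0$, which is your subsolution slack, and (ii) the flux through the bottom $\{x_{n+1}=0^+\}$, namely $V\cdot e_{n+1}=-\tfrac12|\nabla\phi_{t_X}|^2+W(\phi_{t_X})\le 0$. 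Point (ii) is a Modica-type inequality for the barrier, and it is precisely what disposes of the crescent $A_t\cap\{u=0\}$; your scheme never invokes it.

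Concretely, after your IBP the crescent contributes $\int_{\{u=0\}}\bigl[-\Delta\phi_t\,\phi_t+W(\phi_t)\bigr]$, and using only the subsolution bound this is $\le (1+\tfrac1\gamma)\int\phi_t^{-\gamma}$, a strictly positive quantity of order $t^{2\alpha-1}$ in the normal variable. The gain you extract on $\{u>0\}$ comes solely from the $d^\sigma$ correction in Lemma~\ref{strict}, which produces slack of size $\sim d^{\,\sigma-2}$; multiplied by $\phi_t-u\lesssim t\,d^{\alpha-1}$ and integrated over the shrinking set $A_t\cap\{u>0\}$, this is a higher-order term with no a priori reason to dominate the crescent cost. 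Your closing claim that the $d^\sigma$ slack ``would absorb this bookkeeping'' is exactly the missing step, and the paper's Introduction already warns that for $\gamma<0$ ``this free boundary condition cannot be easily detected by integrations by parts or domain variations.'' To repair the argument you would need to import the inequality $\tfrac12\phi'^2\ge W(\phi)$ for the radial profile (which does hold, since $\mu>0$); but once you do that, you are essentially reconstructing the calibration.
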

\begin{proof}
The fact that $u$ is continuous and satisfies the first equation in \eqref{v} in $B_1 \cap \{u>0\}$ follows from Proposition \ref{opt1}. It remains to show that $u$ satisfies the free boundary condition. 

Let us assume that $u$ touches $\psi$ by above at $x_0 \in F(u)$, with $\psi$ as in Definition \ref{defn} and $\mu >0$. Then in view of Lemma \ref{strict}, $u$ touches $\phi$ strictly by above at $x_0$, with $\phi$ defined in \eqref{p}. We will show that this contradicts the minimality of $u$, using a calibration argument. For simplicity, assume that the unit normal to $F(u)$ at $x_0$ is $e_n$.
For any non-negative function $v$, smooth in its positivity set, we denote by $\Gamma_v$ its graph in $\R^{n+1}$ over the positivity set, and by $\nu_v(x)$ the upward unit normal to $\Gamma_v$ at $(x,v(x))$. 

Notice that we can write the energy of $u$ over a domain $\Omega$ as a surface integral over its positivity graph in $\Omega$, $\Gamma_u(\Omega)$, in the following way:
\begin{equation}\label{EE}J(u, \Omega) = \int_{\Gamma_u(\Omega)} G(u, \nu_u) d\sigma,\end{equation}
with 
$$G(s, \nu):= \frac{1}{2} \frac{|\nu'|^2}{\nu_{n+1}} + W(s) \nu_{n+1}, $$
and
$$s>0,\quad  |\nu|=1,  \quad \nu:=\langle \nu', \nu_{n+1}\rangle, \quad \nu_{n+1}>0.$$

Let $G(s,y)$ is the 1-homogeneous extension (in $y$) of $G(s,\nu)$.
Then, 
$$\nabla_y G(s,\nu):= \langle \frac{\nu'}{\nu_{n+1}}, -\frac 12 \frac{|\nu'|^2}{\nu_{n+1}^2 } + W(s) \rangle,$$
and the homogeneity and convexity in $y$ imply,
\begin{equation}\label{hom}G(\phi(x), \nu_\phi(x)) = V_\phi(x,\phi(x)) \cdot \nu_{\phi}(x), 
\end{equation}
with
$$V_\phi(x,\phi(x)):= \nabla_y G(\phi(x), \nu_\phi(x)),$$
and
\begin{equation}\label{con}G(\phi(x), \nu_u(x)) \geq  V_\phi(x,\phi(x)) \cdot \nu_{u}(x).\end{equation}

The vector field $V_\phi(x,\phi(x))$ is defined on the graph $\Gamma_\phi$, and we extended in $\R^{n+1}$ constantly in the $e_n$ direction and denote it simply by $V$. This vector field is associated with the graphs of the translations 
$$\phi_t(x):= \phi(x+te_n), \quad t \in \R,$$ which provide a foliation of a neighborhood of $(x_0, 0)$ in $\R^n \times \R^+$. In other words,  for each given point $X:=(x,x_{n+1})$ in this set, we identify the element $\phi_{t_X}$ of the foliation that passes through it i.e $x_{n+1}= \phi_{t_X}(x)$ and $$V(X) = V_{\phi_{t_X}}(X).$$
Now, set $$D:= \{u(x) < x_{n+1} < \phi_{\bar t}(x)\} \quad \subset \R^{n+1},$$  with $\bar t>0$ chosen in such a way that $D$ is included in the neighborhood of $(x_0,0)$ foliated by the graphs of the $\phi_t$'s.
Denote by $$D_\eps:= D \cap \{x_{n+1} > \eps\}, \quad \mbox{ and} \quad \Gamma_\eps:=D \cap \{x_{n+1}=\eps\},$$ for $\eps>0$ small.
Then, by the divergence theorem,
$$\int_{D_\eps} div \, V\;dX = \int_{\Gamma_{\phi_{\bar t}} \cap \p D_\eps} V \cdot \nu_{\phi_{\bar t}} \;d\sigma- \int_{\Gamma_u \cap \p D_\eps} V \cdot \nu_u \;d\sigma - \int_{\Gamma_\eps} V \cdot e_{n+1} \;dx,$$
and in view of \eqref{hom}-\eqref{con},
$$\int_{D_\eps} div \, V \;dX\geq \int_{\Gamma_{\phi_{\bar t}} \cap \p D_\eps} G(\phi_{\bar t}, \nu_{\phi_{\bar t}})\;d\sigma - \int_{\Gamma_u \cap \p D_\eps} G(u, \nu_u)\;d\sigma - \int_{\Gamma_\eps} V \cdot e_{n+1}\; dx.$$ From the formula for $V$, on $\Gamma_\eps$, for $\eps$ small,
$$V(x,\eps) \cdot e_{n+1}= - \frac 12 |\nabla \phi_{t_X}|^2 + \frac 1 \gamma  \phi_{t_X}^{-\gamma} \leq 0.$$
Indeed, we only need to verify that the one variable function of $d$, 
$$\phi(d):=c_0 d^\alpha + \frac{\mu}{2} d^\beta + d^\sigma,$$ 
satisfies:
$$\frac 12 \phi'^2 \geq \frac 1 \gamma \phi^{-\gamma}, \quad \text{if $d>0$ is small}.$$ 
Since $\mu>0$, we know that $\phi \geq c_0d^\alpha$ while $\phi' \geq \alpha c_0 d^{\alpha -1}.$ Hence, by the definition of $\alpha, c_0$ (see \eqref{a}),
$$ \phi'^2 \geq \alpha ^2 c_0^2 d^{2(\alpha -1)}= \alpha^2 c_0^2 d^{-\alpha \gamma} \geq \alpha^2 c_0^{2+\gamma} \phi^{-\gamma} = \frac 2 \gamma \phi^{-\gamma},$$ as desired.

Finally, this implies that, after letting $\eps \to 0$, 
\begin{equation}\label{divv}
\int_{D} div \, V  \;dX \geq \int_{\Gamma_{\phi_{\bar t}} \cap \p D} G(\phi_{\bar t}, \nu_{\phi_{\bar t}})\;d\sigma - \int_{\Gamma_u \cap \p D} G(u, \nu_u)\;d\sigma.
\end{equation} 
Next we show that 
$$div \, V = - \triangle \phi_{t_X} - \phi_{t_X}^{-(\gamma+1)} <0,$$
and the left hand side in the inequality \eqref{divv} is non-positive, which in view of the definition of $G$ contradicts the minimality  of $u$ (see \eqref{EE}). 

To compute $div \, V$ at a point $(z_0, \phi(z_0))$, let $$D_\varphi:= \{0<\phi(x) - \eps \varphi (x) < x_{n+1}< \phi(x), x \in B_\delta(z_0)\}$$ with $\varphi(z_0)> 0$ and $\varphi$ a smooth bump function supported on $B_{\delta}(z_0) \subset \{\varphi>0\}$.
Then, by a similar computation as above, 
$$\int_{D_\varphi} div \, V \;dX = \int_{\Gamma_{\phi}} G(\phi, \nu_{\phi})\;d\sigma -  \int_{\Gamma_{\phi-\eps \varphi}} G((\phi-\eta\varphi), \nu_{\phi-\eps\varphi})\;d\sigma + O(\eps^2),$$
where we used that  if $x_{n+1}=\phi(x)-\eps \varphi(x)= \phi_{t}(x),$ then $\nu_{\phi-\eps\varphi}(x)= \nu_{\phi_{t}}(x)+O(\eps)$ and by the homogeneity and $C^2$ smoothness of $G$
$$G(x_{n+1}, \nu_{\phi - \eps \varphi}(x)) = \nabla_y G (x_{n+1}, \nu_{\phi_{t}}(x))\cdot \nu_{\phi-\eps\varphi}(x)+ O(\eps^2).$$
Thus, for $\eps$ small,  
\begin{align*}
\int_{D_\varphi} div \, V \;dX=& \int_{B_\delta(z_0)}\left(\frac 12 |\nabla \phi|^2 + W(\phi)\right) dx\\
& \quad \quad  - \int_{B_\delta(z_0)}\left(\frac 12 |\nabla(\phi-\eps\varphi|^2 + W(\phi-\eps \varphi)\right) dx + O(\eps^2)\\
=&\eps \int_{B_\delta(z_0)} \left(\nabla \phi \cdot \nabla \varphi  - \phi^{-(\gamma+1)}\varphi\right) dx + O(\eps^2)\\
=& \eps \int_{B_\delta(z_0)}\left(-\Delta \phi - \phi^{-(\gamma+1)} \right)\varphi dx + O(\eps^2).
\end{align*}
We divide by $\eps$ and let $\eps \to 0$ and then $\delta \to 0$. Since $|D_{\varphi}|=\eps \int \varphi dx$, and $D_\varphi$ tends to $(z_0, \phi(z_0))$, 
we conclude that at $(z_0, \phi(z_0))$ $$div \ V= -\Delta \phi - \phi^{-(\gamma+1)}.$$ The desired conclusion follows by equation \eqref{sub}.
\end{proof}

\begin{rem}\label{comp} For the proof of the subsolution property of minimizers we need a slightly weaker condition than the one required in Definition \ref{defn}: a minimizer $u$ restricted to each connected component of $\{u>0\}$ which has $x_0$ on its boundary, cannot touch by below a comparison function $\psi \in \mathcal C^-$ (with $\mu<0$) at $x_0$.

\end{rem}

\begin{rem}\label{r1}In view of Proposition \ref{min=vis}, a minimizer satisfies the estimate \eqref{univ}. Notice that unlike Proposition \ref{opt1}, in this estimate the constant $C$ does not depend on the boundary data.\end{rem}

\begin{rem} \label{r2} A straightforward application of the maximum principle gives that a continuous nonnegative function $u$ that satisfies $\triangle u = - u^{-(\gamma+1)}$ in the set $\{u>0\}$ has the {\it weak non-degeneracy} property
$$ u(x) \ge c \, \, dist (x, F(u)) ^\alpha,$$
for some $c>0$. 

Indeed, if $B_1 \subset \{u>0\}$, then $u \geq c$ in $B_{1/2}$, with $c>0$ universal. For this it is enough to compare $u$ with the explicit radially symmetric solution of $\Delta \phi = - \phi^{-(\gamma +1)}$ which vanishes on $\p B_1$.
\end{rem}

We now prove a strong non-degeneracy property for minimizers of \eqref{J}, which combined with Lemma \ref{opt} implies Theorem \ref{T2}. 
\begin{prop}[Non-degeneracy] \label{SND}
Assume $u$ is a minimizer of \eqref{J} in $B_1$ and $0 \in F(u)$. Then
$$ \max_{\p B_r} u \ge c r^\alpha, \quad \quad r \le 1/2.$$
\end{prop}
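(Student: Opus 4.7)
The plan is to argue by contradiction, combining an energy comparison against an explicit radial competitor with a universal positive lower bound on $J(u, B_1)$ forced by the hypothesis $0 \in F(u)$.

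By the $\alpha$-homogeneous scaling $u_\lambda(x) = \lambda^{-\alpha} u(\lambda x)$, which sends minimizers to minimizers, it suffices to prove $\max_{\partial B_1} u \ge c$ for a universal constant $c > 0$. Suppose to the contrary that $M := \max_{\partial B_1} u$ is smaller than some small $\varepsilon$ to be determined. I would build a competitor $v$ that agrees with $u$ outside $B_1$, vanishes on a slightly smaller ball $B_{1-2\delta}$, and transitions via the radial $\alpha$-homogeneous profile $\phi(x) = c_0(|x| - (1-2\delta))_+^\alpha$ across the shell $B_{1-\delta} \setminus B_{1-2\delta}$; a further interpolation layer in $B_1 \setminus B_{1-\delta}$ is used to match $u$ on $\partial B_1$ in trace at negligible extra cost. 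Choose $\delta = (M/c_0)^{1/\alpha}$ so that $\phi(1-\delta) = M$. Using that $\phi$ realizes equality in the Cauchy--Schwarz identity $\tfrac12 |\phi'|^2 = \tfrac{1}{\gamma} \phi^{-\gamma}$ (as in the one-dimensional analysis of Section 3.1), and that $\alpha > 1/2$ makes the relevant integrals finite, a direct calculation yields
\[
J(v, B_1) \le C\, \delta^{2\alpha - 1} = C\, M^{(2-\gamma)/2},
\]
which tends to $0$ as $M \to 0$ since $\gamma < 2$. Minimality then gives $J(u, B_1) \le C M^{(2-\gamma)/2}$.

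For the lower bound, Lemma \ref{opt} provides the universal upper bound $u \le C$ in $B_{1/2}$, so $u^{-\gamma} \ge C^{-\gamma}$ on $\{u>0\} \cap B_{1/2}$. It then suffices to establish a density estimate
\[
|\{u>0\} \cap B_{1/2}| \ge c_0
\]
with $c_0 > 0$ universal, depending only on $0 \in F(u)$; this would force $J(u, B_1) \ge c_* > 0$ and, combined with the upper bound, produce the desired contradiction for $\varepsilon$ small.

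The main obstacle is precisely the density estimate. I would approach it via a secondary energy comparison in the spirit of Alt--Caffarelli, adapted to the singular potential $W(t) = t^{-\gamma}/\gamma$, or alternatively by a Harnack-chain argument: starting from an interior point of $\{u>0\}$ at definite distance from $F(u)$ (whose existence must itself be certified, e.g.\ from the weak non-degeneracy of Remark \ref{r2} plus an additional competitor argument), propagate positivity through overlapping balls in $\{u > 0\}$ toward $\partial B_1$ using the Harnack inequality for the superharmonic function $u$. The delicate point is that the gradient of $u$ blows up near $F(u)$ like $d^{\alpha-1}$, so the chain must be kept a definite distance away from the free boundary, which requires a quantitative geometric control of the positivity set that does not follow directly from weak non-degeneracy alone.
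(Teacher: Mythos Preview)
Your overall architecture matches the paper's: show that a small boundary maximum forces $J(u,B_1)=o(1)$, and then contradict this with a universal positive lower bound coming from $0\in F(u)$. The upper-bound half is fine in spirit; the paper does it more cleanly by comparing $u$ with $\min(u,\phi)$ for a radial supersolution $\phi=c_0 d^\alpha-d^{2-\alpha}-d^\sigma$ supported in a thin outer annulus, which avoids any interpolation layer.

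The genuine gap is exactly where you flag it: the density estimate $|\{u>0\}\cap B_{1/2}|\ge c_0$. Your two proposed routes do not close it. The Harnack-chain idea presupposes an interior point of $\{u>0\}$ at \emph{definite} distance from $F(u)$, and weak non-degeneracy (Remark~\ref{r2}) only gives $u(x)\ge c\,\mathrm{dist}(x,F(u))^\alpha$, which is circular; the paper explicitly remarks that the standard iterative argument from the nonnegative-power case fails here. A generic Alt--Caffarelli energy comparison likewise needs a clean ball in $\{u>0\}$ as input.

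What the paper supplies at this point is a calibration argument. Take the same supersolution $\phi$ and consider the dilations $\phi_\lambda(x)=\lambda^{-\alpha}\phi(\lambda x)$, whose graphs foliate the region $\{0\le x_{n+1}\le\kappa|x|^\alpha\}$. The calibration from Proposition~\ref{min=vis} then yields a dichotomy: either $u\le\kappa|x|^\alpha$ everywhere in $B_{1+d_0}$, in which case $u\le\phi$ and hence $u\equiv 0$ in $B_1$; or there exists $\bar x$ with $u(\bar x)>\kappa|\bar x|^\alpha$, and optimal regularity (Lemma~\ref{opt}) then forces $B_{c'|\bar x|}(\bar x)\subset\{u>0\}$. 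Running this at every center $x_0\in B_{1/8}$ gives either $u\equiv 0$ near the origin (contradicting $0\in F(u)$) or a uniform density of $\{u>0\}$ at some scale around each $x_0$; a finite-overlap covering plus the bound $u\le C$ then produces $J(u,B_1)\ge c|B_{1/8}|$. This foliation/calibration step is the missing idea in your proposal.
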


We remark that strong non-degeneracy does not follow from the weak non-degeneracy of Remark \ref{r2} via a standard iterative argument as in the nonnegative power case \cite{AP, CS}. Instead, we prove the following lemma.

\begin{lem} Let $u$ minimize \eqref{J} in $B_2.$ There exists a universal constant $\delta>0$ such that 
$$\text{if $u \leq \delta$ on $\p B_1$ then $u\equiv 0$ in $B_{1/8}.$}$$
\end{lem}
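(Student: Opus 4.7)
The plan is to argue by contradiction using a competitor built from the shifted $\alpha$-homogeneous profile together with the Cauchy--Schwarz identity used in the proof of Theorem \ref{T4}. Set $a := 1 - (\delta/c_0)^{1/\alpha}\in (7/8,1)$ for $\delta$ small, and introduce the radial profile
$$\phi_a(x) := c_0\,((|x|-a)_+)^\alpha,$$
which vanishes on $\overline{B_a}\supset \overline{B_{1/8}}$ and satisfies $\phi_a|_{\partial B_1} = \delta \ge u|_{\partial B_1}$. Then $v := \min(u,\phi_a)$ is an admissible competitor agreeing with $u$ on $\partial B_1$ and vanishing on $B_a\supset B_{1/8}$.

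The key observation is that $\phi_a$ saturates the pointwise identity $\tfrac12|\nabla\phi_a|^2 = W(\phi_a)$; it is precisely the radial profile for which the AM--GM inequality from the proof of Theorem \ref{T4} is an equality. Writing the energy explicitly on the annulus and using $\alpha\gamma = 2(1-\alpha)$ together with $(1-a)^{2\alpha-1} = (\delta/c_0)^{\beta}$, where $\beta := 1 - \gamma/2 = (2\alpha-1)/\alpha > 0$, gives $J(\phi_a, B_1\setminus B_a) \le C(n,\gamma)\,\delta^\beta$. Since $v=u$ where $u\le \phi_a$ and $v=\phi_a\equiv 0$ on $B_a$, the minimality inequality $J(u)\le J(v)$ collapses to the localized bound
$$\int_{\{u>0\}\cap B_a}\Bigl[\tfrac12|\nabla u|^2 + W(u)\Bigr] dx \;\le\; J(\phi_a, B_1\setminus B_a)\;\le\; C\delta^\beta.$$

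To finish, I would combine this integral smallness with (i) the universal $L^\infty$ bound from optimal regularity (Lemma \ref{opt}), which forces $W(u)\ge c>0$ on $\{u>0\}\cap B_{1/2}$ and hence $|\{u>0\}\cap B_a|\le C\delta^\beta$; (ii) the structural fact that any connected component $U$ of $\{u>0\}\cap B_1$ meeting $B_{1/8}$ must reach $\partial B_1$, since otherwise the cut-off $u\chi_{U^c}$ would strictly decrease $J$ while preserving the boundary data; and (iii) the sandwich $u\asymp \mathrm{dist}(\cdot,F(u))^\alpha$ given by Remark \ref{r2} and Lemma \ref{opt}. Together these imply that $U\cap B_a$ is a connected tube of length $\gtrsim 1$ with volume $\le C\delta^\beta$ and thus of width $\lesssim \delta^{\beta/(n-1)}$, so that $u \lesssim \delta^{\alpha\beta/(n-1)}$ throughout $U$. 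Iterating this improved sup bound via the $\alpha$-homogeneous rescaling $u_\lambda(x) := \lambda^{-\alpha}u(\lambda x)$ then drives $u(x_0)$ to $0$, contradicting $u(x_0)>0$.

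The main obstacle is closing the final iteration: because $\alpha\beta/(n-1)<1$ in general, the rescaled boundary datum does not automatically fall below the threshold $\delta$. The careful exploitation of the tube geometry of $U$ (connecting $B_{1/8}$ to $\partial B_1$ with a constrained cross-section), together with the viscosity formulation of Definition \ref{defn} that forbids $u$ from touching explicit one-sided barriers at free boundary points, is what allows the iteration to close and produce the desired contradiction.
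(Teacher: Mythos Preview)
Your opening step is correct and parallels the paper: the competitor $v=\min(u,\phi_a)$ with the one-dimensional profile $\phi_a$ does give
$J(u,B_a)\le J(\phi_a,B_1\setminus B_a)\le C\delta^{\beta}$,
which is exactly the ``small energy'' conclusion $J(u,B_1)=o(1)$ that the paper obtains (there via the supersolution $\phi=c_0 d^\alpha-d^{2-\alpha}-d^\sigma$). Your points (ii) and (iii) are also fine, and the $L^\infty$ bound you need for (i) is available, though it comes from comparison with the radial solution of the interior equation rather than from Lemma \ref{opt}.

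The genuine gap is the iteration you propose to close the argument. From the measure bound $|\{u>0\}\cap B_a|\le C\delta^\beta$ you can only conclude that $\mathrm{dist}(x,F(u))\le C\delta^{\beta/n}$ for $x$ in the positivity set (the largest ball contained in a set of volume $V$ has radius $\le (V/\omega_n)^{1/n}$; the exponent $\beta/(n-1)$ would require the set to be a genuine cylinder, which it is not). Hence Lemma \ref{opt} gives $u\le C\delta^{\alpha\beta/n}$, and since
$\alpha\beta=\dfrac{2-\gamma}{2+\gamma}\in(0,1)$,
the exponent $\alpha\beta/n$ is strictly less than $1$. For small $\delta$ the new bound $C\delta^{\alpha\beta/n}$ is \emph{larger} than $\delta$, so after the $\alpha$-homogeneous rescaling the boundary datum does not fall below the threshold; iterating produces bounds $C^{1/(1-\theta)}\delta^{\theta^k}\to C^{1/(1-\theta)}$, not $0$. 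You recognize this obstacle, but the appeal to ``tube geometry'' and the viscosity condition of Definition \ref{defn} is not an argument: nothing there produces an exponent $\ge 1$ or otherwise forces $u$ to vanish.

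The paper closes the argument by a different mechanism. After obtaining $J(u,B_1)=o(1)$, it uses the calibration/foliation argument from Proposition \ref{min=vis} with the family $\phi_\lambda(x)=\lambda^{-\alpha}\phi(\lambda x)$ to prove a \emph{dichotomy at every scale}: for each $x_0\in B_{1/8}$ either $u\equiv 0$ in $B_{1/4}(x_0)$, or $u(\bar x)>\kappa|\bar x-x_0|^\alpha$ at some point, which by Lemma \ref{opt} forces $|B_r(x_0)\cap\{u>0\}|\ge \bar c\,|B_r(x_0)|$ for some $r\le 1/2$. The second alternative, together with $u\le C$ and hence $W(u)\ge c$, gives $J(u,B_r(x_0))\ge c_1|B_r(x_0)|$; a finite-overlap covering of $B_{1/8}$ then yields $J(u,B_1)\ge c|B_{1/8}|$, contradicting the smallness. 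What is really used here is a \emph{lower density} estimate for $\{u>0\}$ at some scale, coming from the calibration comparison, rather than an iterated sup bound. If you want to salvage your route you would need to replace the failing iteration by such a density statement.
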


\begin{proof} We prove the theorem with $B_{1+d_1}$ instead of $B_1$, with $d_1$ a small universal constant to be specified below. Let
$$\phi:= c_0 d^\alpha - d^{2-\alpha} - d^\sigma, \quad 2-\alpha< \sigma <4-3\alpha,$$ with $d(x):=dist(x, \p B_1)$
 when $|x|\geq 1$ and $0$ otherwise. Then, the computation from Lemma \ref{strict} shows that 
 $$\Delta \phi < - \phi^{-(\gamma+1)}, \quad \text{in $B_{1+d_0} \setminus \bar B_1.$}$$
 We choose $d_1(\delta)$ so that
 $$ \phi |_{\p B_{1+d_1}} = \delta.$$
 Then, since $u \leq \delta$ on $\p B_{1+d_1},$
 $$J(u, \{u> \phi\}) \leq J(\phi, \{u>\phi\}) \leq J(\phi, B_{1+d_1}) \to 0, \quad \text{as $\delta \to 0$.}$$
 In particular, for $o(1) \to 0$ as $\delta \to 0,$
 \begin{equation}\label{small1}J(u, B_{1})=o(1).\end{equation}
 Since $u \le \delta$ on $\p B_{1+d_1},$ the maximum principle implies that $u \le C$ in $B_{1+ d_1}$ by comparing $u$ with the radial solution which has boundary data $1$ on $\p B_{1+d_1}$. 
 
 
 We now consider the family of subsolutions $\phi_\lambda(x):= \lambda^{-\alpha} \phi(\lambda x)$ whose graphs foliate the region $D:=\{0 \leq x_{n+1} \leq \kappa |x|^\alpha\}$ with $\kappa = \phi |_{\p B_{1+d_0}}$. By the same calibration argument as in Proposition \ref{min=vis} we conclude that if $u$ is a minimizer defined in $B_{1+d_0}$, 
 $$\text{if $u \leq \kappa |x|^\alpha$ in $B_{1+d_0}$ then $u \leq \phi$ in $B_{1+d_0}$,}$$ and in particular $u \equiv 0$ in $B_1$. 
 Thus we have the following dichotomy: 
 $$\mbox{either $u \equiv 0$ in $B_1$ or there exists $\bar x \in B_{1+d_0}$ such that $u(\bar x) > \kappa |\bar x|^\alpha$.}$$
 The second alternative, by the optimal regularity Lemma \ref{opt}, implies that there exists $0<r=|\bar x| < 1+d_0$ such that $B_{c' r}(\bar x) \subset \{u>0\},$ for some $c'$ universal. 
 By a dilation of a bounded factor and a translation we conclude that for all $x_0 \in B_{1/8}$ 
$$\text{either $u \equiv 0$ in $B_{1/4}(x_0)$ or $|B_r(x_0) \cap \{u>0\}| \geq \bar c |B_r(x_0)|$ for some $r\le 1/2$.}$$ 
If for some $x_0$ we have that $u \equiv 0$ in $B_{1/4}(x_0)$ then $u \equiv 0$ in $B_{1/8}$ as desired. Assume by contradiction the second alternative holds at all $x_0$. Since $u$ is bounded above,
 $$J(u, B_{r}(x_0)) \geq c_1 |B_r(x_0)|,$$ for some $c_1>0$ universal. 
 We use a finite overlapping cover with these balls and find
 $$J(u, B_1) \geq c |B_{1/8}|,$$
 which contradicts \eqref{small1} for $\delta$ small enough.
\end{proof}
 

Next, we prove a simple lemma which will be used to obtain Weiss monotonicity formula in the following section.

\begin{lem}\label{u2}Let $u$ be a minimizer to $J$ in $B_1$, then $u^2 \in C^{0,1}(B_1).$
\end{lem}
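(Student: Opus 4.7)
The plan is to bound $|\nabla(u^2)|=2u|\nabla u|$ pointwise in $\{u>0\}\cap B_{1/2}$ and then pass to a global Lipschitz bound using the continuity of $u^2$ and its vanishing on $\{u=0\}$. Away from the free boundary the bound is immediate: the equation $\Delta u=-u^{-(\gamma+1)}$ is uniformly elliptic with smooth right hand side wherever $u$ is bounded away from $0$, so interior Schauder estimates give $|\nabla u|\le C$ there. The substance of the argument is the near free boundary regime.

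For such a point $x_0\in\{u>0\}\cap B_{1/2}$, let $d:=\mathrm{dist}(x_0,F(u))$ and use the $\alpha$-homogeneous rescaling that leaves the problem invariant,
\[
\tilde u(y):=d^{-\alpha}u(x_0+dy),
\]
so that $\tilde u$ is a minimizer in (a translation of) $d^{-1}B_1$, solves the same Euler--Lagrange equation in its positivity set, and satisfies $B_1(0)\subset\{\tilde u>0\}$. I would then establish universal two-sided bounds on $\tilde u$ in $B_{1/2}(0)$. The upper bound $\tilde u\le C$ comes from the optimal regularity of Lemma~\ref{opt} together with Remark~\ref{r1}, applied in the original variables: for $y\in B_{1/2}(0)$ one has $\mathrm{dist}(x_0+dy,F(u))\le \tfrac{3}{2}d$. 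The lower bound $\tilde u\ge c$ on $B_{1/2}(0)$ comes from the weak non-degeneracy of Remark~\ref{r2}, applied inside $B_{1/2}(y)\subset B_1(0)\subset\{\tilde u>0\}$ for every $y\in B_{1/2}(0)$.

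With $\tilde u$ sandwiched between two universal positive constants on $B_{1/2}(0)$, the right hand side $-\tilde u^{-(\gamma+1)}$ is smooth and bounded, so standard interior gradient estimates give $|\nabla\tilde u|(0)\le C'$. Scaling back, $|\nabla u|(x_0)=d^{\alpha-1}|\nabla\tilde u|(0)\le C'd^{\alpha-1}$, which combined with $u(x_0)\le C d^\alpha$ from Lemma~\ref{opt} yields the key pointwise bound
\[
u(x_0)\,|\nabla u|(x_0)\le CC'\,d^{2\alpha-1}\le CC',
\]
where the last inequality uses $d\le 1$ and the crucial positivity $2\alpha-1=\frac{2-\gamma}{\gamma+2}>0$ under the standing hypothesis $\gamma\in(0,2)$. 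Therefore $|\nabla(u^2)|\le 2CC'$ throughout $\{u>0\}\cap B_{1/2}$.

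The final step converts this interior gradient bound into a Lipschitz bound on all of $B_{1/2}$ by a standard path argument: given $x,y\in B_{1/2}$, break the segment $[x,y]$ at its first and last intersections with $F(u)$ (if any), apply the interior bound on the positivity subsegments, and use that $u^2$ vanishes at the intermediate endpoints. The main obstacle is the rescaling step together with the uniform two-sided bound on $\tilde u$; once it is in place the rest is routine. The whole mechanism really rests on $2\alpha>1$, which is precisely what allows the vanishing rate $u\sim d^\alpha$ to compensate the gradient blow-up $|\nabla u|\sim d^{\alpha-1}$ near the free boundary.
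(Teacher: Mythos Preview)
Your proof is correct and follows essentially the same route as the paper: rescale by $d=\mathrm{dist}(x_0,F(u))$, use optimal regularity (Lemma~\ref{opt}, Remark~\ref{r1}) for the upper bound and weak non-degeneracy (Remark~\ref{r2}) for the lower bound on $\tilde u$, apply interior gradient estimates, and scale back to obtain $u|\nabla u|\le C d^{2\alpha-1}$, which is bounded since $\alpha>1/2$. Your write-up is more explicit than the paper's in handling the away-from-free-boundary case and in spelling out the segment argument that converts the pointwise bound on $|\nabla(u^2)|$ into a Lipschitz estimate across the zero set, but the underlying mechanism is identical.
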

\begin{proof} Let $x_0 \in \{u>0\} \cap B_{1/2}$ and $r:=dist(x_0,F(u))$ with $F(u) \cap B_{1/2} \neq \emptyset$. We rescale around $x_0$,
$$\tilde u(x):= \frac{u(x_0 + rx)}{r^\alpha},$$
and the optimal regularity and Remark \ref{r2} imply $\tilde u \sim 1$ in $B_{3/4}$. Thus by elliptic regularity, $\tilde u |\nabla \tilde u| \leq C$ in $B_{1/2}$. Rescaling back we find that $$u|\nabla u|(x_0) \leq r^{2\alpha -1},$$ and since $\alpha > 1/2$ the desired claim follows.\end{proof}

We conclude the section with the stability of non-degenerate viscosity solutions under uniform limits. The proof follows immediately from the definitions and we omit it.

\begin{prop} Let $u_k$ be a sequence of non-degenerate viscosity solutions  to \eqref{v} in $B_1$, and $u_k \to u$ uniformly locally in $B_1$. Then, $u$ is a viscosity solution to \eqref{v} in $B_1.$
\end{prop}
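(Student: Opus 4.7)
The plan is to verify both conditions in Definition \ref{defn} in turn. For the interior equation, on any compact $K \Subset \{u>0\}$, uniform convergence and positivity of $u$ give a uniform lower bound $u_k \ge c(K)>0$ on $K$ for $k$ large; hence $-u_k^{-(\gamma+1)}$ is uniformly bounded on $K$, and standard Schauder estimates together with a bootstrap yield $u_k \to u$ in $C^\infty_{loc}(\{u>0\})$, so $u$ satisfies $\Delta u = - u^{-(\gamma+1)}$ classically in $\{u>0\}$.

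For the free boundary condition, I would argue by contradiction. Suppose $u$ is touched from above at $x_0 \in F(u)$ by some $\psi \in \mathcal{C}^+$ with parameter $\mu>0$ (the $\mathcal{C}^-$, $\mu<0$ case being symmetric, with sub- replaced by supersolution). Apply Lemma \ref{strict} to replace $\psi$ with
$$\phi(x) = c_0 d(x)^\alpha + \tfrac{\mu}{2} d(x)^{2-\alpha} + d(x)^\sigma$$
on the same ball $B = B_R(z_0)$, so that $u$ touches $\phi$ strictly from above at $x_0$ and $\phi$ is a strict classical subsolution in the annulus $\{0<d<d_0\}$. The idea is then to transfer this picture to $u_k$ via a maximum principle comparison combined with non-degeneracy.

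Set the shell $A' := \{0 < d < d_0/2\} \cap B_r(x_0)$. By the strict touching together with compactness and uniform convergence, $u_k \ge \phi$ holds on $\partial A'$ for $k$ large: on $\partial B \cap \overline{B_r(x_0)}$ both sides vanish, while on the remaining (compact) pieces of $\partial A'$ the positive gap $u - \phi \ge \eta > 0$ survives a small $L^\infty$ perturbation. Also $u_k > 0$ on $A'$ for $k$ large since $u \ge \phi > 0$ there. If $u_k - \phi$ had a negative interior minimum at some $y \in A'$, then $\Delta(u_k-\phi)(y)\ge 0$, while the equation for $u_k$, the strict subsolution property of $\phi$, and the fact that $t \mapsto t^{-(\gamma+1)}$ is decreasing (with $u_k(y)<\phi(y)$) would force
$$\Delta(u_k-\phi)(y) \;=\; -u_k(y)^{-(\gamma+1)} - \Delta\phi(y) \;<\; -u_k(y)^{-(\gamma+1)} + \phi(y)^{-(\gamma+1)} \;<\; 0,$$
a contradiction. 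Hence $u_k \ge \phi$ throughout $\overline{A'}$.

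To close the argument, non-degeneracy together with uniform convergence implies that every point of $F(u)$ is a limit of points of $F(u_k)$; pick $x_k \in F(u_k)$ with $x_k \to x_0$. Since $u_k > 0$ on $A'$, the $x_k$ must lie on $\partial B \cap \overline{B_r(x_0)}$, where $u_k(x_k) = \phi(x_k) = 0$. By the comparison above, $u_k \ge \phi$ in a full neighborhood of $x_k$ (inside $\overline{B}$ via $A'$, outside because $\phi \equiv 0$ there). Dropping the positive term $d^\sigma$ yields the admissible barrier
$$\tilde\phi(x) := c_0 d(x)^\alpha + \tfrac{\mu}{2} d(x)^{2-\alpha} \in \mathcal{C}^+$$
with parameter $\mu/2>0$, and $u_k \ge \phi \ge \tilde\phi$ with equality at $x_k \in F(u_k)$, which is a forbidden touching for the viscosity solution $u_k$. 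The main obstacle I anticipate is this comparison step in the degenerate setting; once $\phi$ has been constructed as a strict subsolution in $\{d>0\}$ via Lemma \ref{strict}, the pointwise maximum principle at an interior minimum goes through cleanly thanks to the monotonicity of $t^{-(\gamma+1)}$.
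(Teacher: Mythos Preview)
The interior part and the overall plan for the free boundary condition are correct, and this is presumably the argument the paper has in mind (the proof is omitted there as routine). However, two steps in your comparison are not justified as written.

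First, the claim ``$u_k>0$ on $A'$ for $k$ large since $u\ge\phi>0$ there'' does not follow from uniform convergence: $\phi$, and hence $u$, is not bounded away from zero on $A'$ (both tend to $0$ as $d\to 0^+$). If the negative interior minimum of $u_k-\phi$ on $\overline{A'}$ is attained at a point $y$ with $u_k(y)=0$ and $\phi(y)>0$, then $y\in F(u_k)$, your second-derivative inequality is unavailable, and no $\mathcal C^+$ barrier is produced at $y$ either (the function $\phi-\phi(y)$ has linear, not $c_0 d^\alpha$, growth from its zero level). Second, even granting $u_k\ge\phi$ on $\overline{A'}$, the free boundary points $x_k\in F(u_k)$ supplied by non-degeneracy are only forced into $B_r(x_0)\setminus B$; they may lie strictly outside $\overline B$, where $\phi\equiv 0$ and the ``touching'' $u_k(x_k)=\phi(x_k)=0$ carries no information.

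Both gaps are closed by replacing the static maximum principle with a sliding argument, which is the standard viscosity-stability mechanism. Translate the barrier along the inward normal, $\phi_t(x):=\phi(x-t\nu)$. For $t$ large one has $\phi_t\equiv 0$ on $B_r(x_0)$, while the strict gap $u-\phi\ge\eta>0$ on $\partial B_r(x_0)\cap\overline B$ (which extends slightly past $\partial B$ by continuity, since $u>0$ there) keeps $u_k\ge\phi_t$ on $\partial B_r(x_0)$ for all $t$ in a fixed neighborhood of $0$. Since $u_k(x_0)\to u(x_0)=0$ while $\phi_t(x_0)\sim c_0|t|^\alpha$ for $t<0$, the family must first touch $u_k$ at some $t_k\to 0$ and some interior point $y_k\in\overline{B_{t_k}}\cap B_r(x_0)$. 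Either $u_k(y_k)=\phi_{t_k}(y_k)>0$, and the strict subsolution property of $\phi_{t_k}$ together with the equation for $u_k$ gives the classical contradiction you wrote down; or $u_k(y_k)=0$, which forces $y_k\in\partial B_{t_k}\cap F(u_k)$ (since $u_k\ge\phi_{t_k}>0$ just inside $B_{t_k}$), and then $u_k\ge\phi_{t_k}\ge c_0 d_{t_k}^\alpha+\tfrac{\mu}{2}d_{t_k}^{2-\alpha}\in\mathcal C^+$ with equality at $y_k$ contradicts Definition~\ref{defn} for $u_k$ directly.
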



\section{Compactness of minimizers}

The main result of this section is the following compactness statement.

\begin{prop} Let $u_k$ be a sequence of minimizers to \eqref{J} in $B_1$ which converges uniformly to $u$ locally in $B_1$. Then, $u$ is a minimizer to \eqref{J} in $B_1.$
\end{prop}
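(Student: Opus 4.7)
The plan is a standard two-step $\Gamma$-convergence-type argument: first a lower semi-continuity bound $J(u,B_\rho) \le \liminf_k J(u_k, B_\rho)$, then a recovery-sequence construction producing, for each competitor $v$ of $u$ (i.e.\ $v \ge 0$, $v=u$ on $B_1 \setminus B_\rho$, $J(v)<\infty$), admissible competitors $v_k$ for $u_k$ with $\limsup_k J(v_k, B_{\rho'}) \le J(v, B_{\rho'})$ on a slightly larger ball $B_{\rho'}$. Combined with the minimality $J(u_k, B_{\rho'}) \le J(v_k, B_{\rho'})$ and the fact that $v = u$ on $B_{\rho'} \setminus B_\rho$, sending $\rho' \to \rho^+$ yields $J(u, B_\rho) \le J(v, B_\rho)$, so $u$ is a minimizer.

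For the lower semi-continuity step, Lemma~\ref{EB} together with the uniform local convergence $u_k \to u$ gives that $\{u_k\}$ is bounded in $H^1_{loc}(B_1)$, so along a subsequence $u_k \rightharpoonup u$ weakly in $H^1_{loc}$ and the Dirichlet term is handled by weak lower semi-continuity. For the potential term, fix $\eta > 0$: on $\{u > \eta\}$ uniform convergence forces $u_k \ge \eta/2$ for large $k$, so $W(u_k) \le 2^\gamma W(u)$ on that set and $W(u_k) \to W(u)$ pointwise; dominated convergence gives convergence of the integrals on $\{u>\eta\}$. Letting $\eta \to 0^+$ via monotone convergence (and using $W(u) = 0$ on $\{u = 0\}$) yields $\liminf_k \int_{B_\rho} W(u_k)\chi_{\{u_k>0\}} \ge \int_{B_\rho} W(u)\chi_{\{u>0\}}$.

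For the recovery sequence, fix small $\delta > 0$, set $\rho' = \rho + \delta$, $A = B_{\rho'} \setminus B_\rho$, and note that $v = u$ on $A$. Define $v_k := v$ on $B_\rho$, $v_k := u_k$ on $B_1 \setminus B_{\rho'}$, and on $A$ interpolate via $v_k := (1 - \phi) v + \phi \, u_k$, with a radial cutoff $\phi$ satisfying $\phi \equiv 0$ on $\partial B_\rho$ and $\phi \equiv 1$ on $\partial B_{\rho'}$. This produces a non-negative $H^1$ competitor whose Dirichlet energy on $A$ is controlled by $C\int_A(|\nabla v|^2 + |\nabla u_k|^2) + C \delta^{-2}\|u_k - u\|_{L^\infty(A)}^2 \, |A|$; for fixed $\delta$ the latter term vanishes as $k \to \infty$, while the former is $o_\delta(1)$ by the uniform $H^1_{loc}$ bound on $u_k$ and the absolute continuity of $\int |\nabla v|^2$.

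The main technical obstacle is controlling $\int_A W(v_k) \chi_{\{v_k > 0\}}$ on the gluing annulus. On $A \cap \{u > 0\}$ the interpolant satisfies $v_k \ge u/2$ eventually, so $W(v_k) \le 2^\gamma W(u)$ is integrable. The delicate region is $A \cap \{u = 0\}$: there $v_k = \phi u_k$ is small and positive wherever $u_k > 0$, and the singular potential $v_k^{-\gamma}/\gamma$ may blow up. To resolve this, one exploits the uniform bound $\int_{B_1} W(u_k) \chi_{\{u_k > 0\}} \le C$ from Lemma~\ref{EB} and selects the gluing radius $r \in (\rho, \rho+\delta)$ via a Fubini slicing argument to place the interpolation in a thin layer on which $\int_{\partial B_r}(|\nabla u_k|^2 + W(u_k)\chi_{\{u_k>0\}})$ is controlled; this arrangement keeps the extra potential contribution from $A$ of order $o_\delta(1)$. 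Passing to $\liminf_k$ and then $\delta \to 0^+$ closes the argument.
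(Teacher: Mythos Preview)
Your overall strategy---lower semi-continuity plus a recovery sequence built by interpolating on an annulus---matches the paper's, and the lower semi-continuity step is fine. The gap is in the recovery step, precisely at the point you flag as the ``main technical obstacle'' but do not actually resolve.

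On $A\cap\{u=0\}\cap\{u_k>0\}$ you have $v=u=0$, hence $v_k=\phi\,u_k$ and $W(v_k)=\phi^{-\gamma}W(u_k)$. With an unspecified radial cutoff $\phi$ the factor $\phi^{-\gamma}$ blows up near $\partial B_\rho$. A Fubini averaging over the position of the transition layer (which is what your ``slicing'' amounts to) produces, after the obvious change of variables, an extra factor $\int_0^1 \phi^{-\gamma}(t)\,dt$ multiplying $\int W(u_k)$. For a linear or generic smooth profile this integral is finite only when $\gamma<1$; it diverges for $\gamma\in[1,2)$. So the sentence ``this arrangement keeps the extra potential contribution from $A$ of order $o_\delta(1)$'' is simply false in that range, and no choice of a good slicing radius alone repairs it---the singularity comes from the shape of $\phi$, not from where the layer sits.

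The paper's Lemma~\ref{uv} fixes exactly this point: the cutoff $\varphi$ is taken with $\varphi(s)=s^\alpha$ near $0$ and $\varphi(s)=1-(1-s)^\alpha$ near $1$, so that $\varphi^{-\gamma}\sim s^{-\alpha\gamma}$ and $(1-\varphi)^{-\gamma}\sim(1-s)^{-\alpha\gamma}$ are integrable on $[0,1]$, since $\alpha\gamma=\tfrac{2\gamma}{\gamma+2}<1$ for every $\gamma\in(0,2)$. Only with this profile does the averaging over the layer position $r$ give a potential contribution of order $C\mu$ (with $\mu$ the layer width) times the uniformly bounded energy of $u_k$ and $v$; one then chooses $\mu$ small first and $k$ large afterwards. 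The same profile handles the symmetric blow-up of $(1-\phi)^{-\gamma}$ on $\{v>0\}$ near $\partial B_{\rho'}$ and gives $\int(\varphi')^2\le C/\mu$ for the gradient cross term. Your sketch is missing precisely this choice of cutoff; once you insert it, the rest of your outline coincides with the paper's argument.
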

\begin{proof} Let $v$ be an admissible competitor with $v=u$ in $B_1 \setminus \bar B_{1-\delta}$ for $\delta>0$ small. We use Lemma \ref{uv} below, and call $v_k$ the interpolation of $u_k$ and $v$ such that $$v_k= v \quad \text{in $B_{1-\delta,}$} \quad v_k=u_k \quad \text{in $B_1 \setminus \bar B_{1-\delta/2}$}.$$ Then, by the minimality of $u_k$ and Lemma \ref{uv}, we get
$$J(u_k, B_1) \leq J(v_k, B_1) \leq J(v, B_{1-\delta/2}) + J(u_k, B_1 \setminus \bar B_{1-\delta}) + o(1),$$ with $o(1) \to 0$ as $k\to \infty.$ Notice that, by Lemma \ref{EB}, $J(u_k,B_{1-\delta/2})$ are uniformly bounded, and the hypotheses of Lemma \ref{uv} apply.

After subtracting $J(u_k, B_1 \setminus \bar B_{1-\delta})$ from both sides we deduce that,
$$J(u_k, B_{1-\delta}) \leq  J(v,  B_{1-\delta/2}) + o(1).$$
By the lower semi-continuity of $J$, we obtain that,
$$J(u, B_{1-\delta}) \leq  J(v, B_{1-\delta/2}).$$ Our claim follows by letting $\delta \to 0.$
\end{proof}

In the next lemma we interpolate between two functions which are $L^2$ close in an annulus, without increasing too much the total energy.
\begin{lem}\label{uv} Let $u_k,v_k$ be sequences in $H^1(B_1)$ and $\delta>0$ small. Assume that $u_k-v_k \to 0$ in $L^2(B_{1-\delta/2} \setminus \bar B_{1-\delta}),$ as $k\to \infty,$ and that $u_k, v_k$ have uniformly (in $k$) bounded energy in $B_{1-\delta/2}$. Then, there exists $w_k \in H^1(B_1)$ with $$w_k:= \begin{cases}v_k \quad \text{in $B_{1-\delta}$}\\ u_k \quad \text{in $B_1 \setminus \bar B_{1-\delta/2}$}\end{cases}$$ such that 
 $$J(w_k, B_1) \leq J(u_k, B_{1-\delta/2}) + J(v_k, B_1 \setminus \bar B_{1-\delta}) + o(1),$$
 with $o(1) \to 0$ as $k \to \infty.$
\end{lem}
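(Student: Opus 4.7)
The plan is a classical pigeonhole--slicing interpolation in the thickened annulus $A := B_{1-\delta/2}\setminus \bar B_{1-\delta}$, executed at a scale that tends to zero slowly relative to the $L^2$--closeness of $u_k$ and $v_k$. Let $\eps_k := \|u_k - v_k\|_{L^2(A)}^2$, which tends to $0$, and fix $N_k\to\infty$ with $N_k\eps_k \to 0$ (for instance $N_k = \lfloor \eps_k^{-1/2}\rfloor$). Partition $A$ radially into $N_k$ equal sub-annuli of width $h_k = \delta/(2N_k)$. By the uniform energy bound and pigeonhole applied to the two non-negative sequences $J(u_k,A_j)+J(v_k,A_j)$ and $\|u_k-v_k\|_{L^2(A_j)}^2$, there is an index $j_k$ for which
\[
 J(u_k,A_{j_k})+J(v_k,A_{j_k}) \le C/N_k, \qquad \|u_k-v_k\|_{L^2(A_{j_k})}^2 \le C\eps_k/N_k .
\]
Let $\eta_k$ be a smooth radial cutoff on $A_{j_k}$ equal to $1$ on the inner sphere and $0$ on the outer sphere, with $|\nabla\eta_k|\le C/h_k$. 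Set $w_k=v_k$ on $B_{1-\delta}$ and on the inner part of $A\setminus A_{j_k}$; $w_k = \eta_k v_k+(1-\eta_k)u_k$ on $A_{j_k}$; and $w_k=u_k$ on $B_1\setminus \bar B_{1-\delta/2}$ together with the outer part of $A\setminus A_{j_k}$.

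Using that the inner pieces are contained in $B_{1-\delta/2}$ and the outer pieces in $B_1\setminus \bar B_{1-\delta}$, the stated inequality reduces to showing $J(w_k,A_{j_k}) = o(1)$. The kinetic part is routine:
\[
 \tfrac12\int_{A_{j_k}} |\nabla w_k|^2 \le C\bigl(J(u_k,A_{j_k})+J(v_k,A_{j_k})\bigr) + C|\nabla\eta_k|^2 \|u_k-v_k\|_{L^2(A_{j_k})}^2 \le \frac{C}{N_k} + \frac{CN_k\eps_k}{\delta^2},
\]
which tends to $0$ with our choice of $N_k$.

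The main obstacle is controlling the singular potential $W(t) = t^{-\gamma}/\gamma$. On the set $\{u_k>0\}\cap\{v_k>0\}$, the convexity of $W$ on $(0,\infty)$ gives the pointwise bound $W(w_k)\le \eta_k W(v_k)+(1-\eta_k)W(u_k)$, and the corresponding integral is absorbed into $J(u_k,A_{j_k})+J(v_k,A_{j_k})\le C/N_k$. The delicate case is the \emph{exclusive-positivity} set $E_k := \bigl(\{u_k=0,v_k>0\}\cup\{v_k=0,u_k>0\}\bigr)\cap A_{j_k}$, where the convex combination $w_k$ is a small multiple of the positive function and $W(w_k)$ can explode pointwise. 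Here one uses that $u_k=0$ on $\{u_k=0,v_k>0\}$ forces $(u_k-v_k)^2=v_k^2$, so $\int v_k^2\le C\eps_k/N_k$ there, while $\int v_k^{-\gamma}\le \gamma \int W(v_k)\le C/N_k$; Hölder with the conjugate exponents $p=(\gamma+2)/\gamma$ and $q=(\gamma+2)/2$ (whose appearance reflects the scaling exponent $\alpha=2/(\gamma+2)$) then yields the quantitative measure bound $|E_k|\le C\eps_k^{\gamma/(\gamma+2)}/N_k \to 0$. A final local modification of $\eta_k$---forcing $\eta_k=1$ in an $h_k$-neighbourhood of $\{u_k=0\}$ and $\eta_k=0$ in an $h_k$-neighbourhood of $\{v_k=0\}$ by mollifying the corresponding indicator functions at scale $\sim h_k$---eliminates the $\eta_k^{-\gamma}$ blow-up while inflating the kinetic cost only by a bounded factor of the existing estimate. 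With this adjustment $\int_{A_{j_k}} W(w_k) = o(1)$, and combining with the kinetic bound concludes $J(w_k,A_{j_k})=o(1)$, giving the lemma.
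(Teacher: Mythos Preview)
Your pigeonhole selection and the kinetic estimate are fine, and you correctly isolate the exclusive-positivity set $E_k$ as the obstruction. The gap is the ``final local modification of $\eta_k$'': this step is not actually carried out, and in general it cannot be. You ask for $\eta_k=1$ on an $h_k$-neighbourhood of $\{u_k=0\}$ and $\eta_k=0$ on an $h_k$-neighbourhood of $\{v_k=0\}$, but nothing prevents these sets from being at distance $\ll h_k$ (or intersecting) inside $A_{j_k}$, in which case the two requirements are incompatible. Even when they are compatible, the modified $\eta_k$ is no longer radial; mollifying the indicator of an arbitrary set at scale $h_k$ produces $|\nabla\eta_k|\sim 1/h_k$ on an $h_k$-neighbourhood of $\partial\{u_k=0\}$, a set whose measure you have no control over (your H\"older bound controls $|E_k|$, not the perimeter or a thickening of the zero set). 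And if $\{u_k=0\}$ meets the outer sphere of $A_{j_k}$, the requirement $\eta_k=1$ there is inconsistent with the boundary condition $\eta_k=0$ needed for $w_k=u_k$.

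The paper sidesteps this geometric difficulty by a different mechanism. It takes a radial cutoff with the tailored endpoint profile $\varphi(s)\sim s^\alpha$ near $0$ and $1-\varphi(s)\sim(1-s)^\alpha$ near $1$, so that $\varphi^{-\gamma}$ and $(1-\varphi)^{-\gamma}$ are \emph{integrable} on $[0,1]$ (here $\alpha\gamma<1$). Since $u,v\ge0$ one has pointwise $w_r\ge\varphi_r u$ and $w_r\ge(1-\varphi_r)v$, hence $W(w_r)\le\varphi_r^{-\gamma}W(u)\chi_{\{u>0\}}+(1-\varphi_r)^{-\gamma}W(v)\chi_{\{v>0\}}$; an average in the placement $r$ of the transition layer then turns the singular factors $\varphi_r^{-\gamma}$, $(1-\varphi_r)^{-\gamma}$ into a bounded constant times the layer width. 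No information whatsoever about the geometry of the zero sets is used. Your pigeonhole plays the role of this averaging for the kinetic term, but without the $\alpha$-profile in the cutoff there is no way to absorb the $\eta_k^{-\gamma}$ factor on $E_k$.
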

\begin{proof} For notational simplicity, in what follows we drop the subscript $k$.

Let  $\varphi(s)$ be a smooth function on $\R$, which is $0$ on $(-\infty, 0]$ and $1$ on $[1,+\infty)$, and such that  for $c>0$ small,
\begin{equation}\label{cut}
\varphi(s)= s^\alpha \quad \text{in [0, c]}, \quad \varphi(s)=1-(1-s)^\alpha, \quad \text{in $[1-c, 1].$}
\end{equation} Thus,
$$\int_0^1 (\varphi')^{2} ds,  \quad \int_0^1 (\varphi^{-\gamma}+(1-\varphi)^{-\gamma}) ds \leq C.$$
For $0< \mu< \frac{\delta}{4}$, we set
$$\varphi_\mu(s) = \varphi\left(\frac{s}{\mu}\right),$$
and obtain
\begin{equation}\label{phi1}\int_\R (\varphi'_\mu)^{2} ds \leq \frac{C}{\mu},\end{equation}  \begin{equation}\label{phi2}\int_\R (\varphi_\mu^{-\gamma}+(1-\varphi_\mu)^{-\gamma})\chi_{\{0<\varphi_\mu<1\}} ds \leq C\mu.\end{equation}
For all $r \in [1-\delta, 1-\frac 3 4 \delta]$, denote by $\varphi_r(x):=\varphi_\mu(|x|-r)$ and define
\begin{equation}\label{w_r}
w_r:= \varphi_r u + (1-\varphi_r)v.
\end{equation} 
Then,
\begin{align*}J(w_r, B_1) \leq &J(u, B_r) + J(v, B_1 \setminus \bar B_{r+\mu}) + \\
\ & C\int_{B_{r+\mu} \setminus \bar B_r} (|\nabla \varphi_r|^2(u-v)^2 + \varphi_r^2|\nabla u|^2+(1-\varphi_r)^2|\nabla v|^2)\,dx\\  + & \int_{B_{r+\mu} \setminus \bar B_r} ((\varphi_r u)^{-\gamma} \chi_{\{u>0\}}+ ((1-\varphi_r)v)^{-\gamma} \chi_{\{v>0\}}) \,dx.
\end{align*}
Thus,
\begin{align*}J(w_r, B_1) \leq &J(u, B_{1-\frac 3 4 \delta}) + J(v, B_1 \setminus \bar B_{1-\delta}) + \\
\ & \int_{B_{1-\delta/2} \setminus \bar B_{1-\delta}} G(r,x) \chi_{\{r< |x|<r+\mu\}}dx\end{align*}
where
\begin{align*}G(r,x):=& C(|\nabla \varphi_r|^2(u-v)^2 + \varphi_r^2|\nabla u|^2+(1-\varphi_r)^2|\nabla v|^2)+\\ &((\varphi_r u)^{-\gamma} \chi_{\{u>0\}}+ ((1-\varphi_r)v)^{-\gamma} \chi_{\{v>0\}})\end{align*}
We now average in $r$ and use Fubini on the right hand side, to obtain,
\begin{align*} \frac 4 \delta \int_{1-\delta}^{1-\frac 3  4 \delta} J(w_r, B_1) dr & \leq  (J(u, B_{1-\frac 3 4 \delta}) + J(v, B_1 \setminus \bar B_{1-\delta}))+\\
\ & \frac 4 \delta \int_{B_{1-\delta/2} \setminus \bar B_{1-\delta}} \int_{|x|-\mu}^{|x|} G(r,x) dr dx. 
\end{align*}
Using \eqref{phi1}-\eqref{phi2}, and recalling that $u=u_k$ and $v=v_k$, we therefore obtain:
\begin{align*} \frac 4 \delta \int_{1-\delta}^{1-\frac 3  4 \delta} J(w_r, B_1) dr  \leq  &(J(u_k, B_{1-\frac 3 4 \delta}) + J(v_k, B_1 \setminus \bar B_{1-\delta}))\\
& + \frac{C}{\mu\delta} \int_{B_{1-\delta/2} \setminus \bar B_{1-\delta}}(u_k-v_k)^2dx \\
& + \frac{C\mu}{\delta}(J(u_k, B_{1-\delta/2} \setminus \bar B_{1-\delta}) + J(v_k, B_{1-\delta/2} \setminus \bar B_{1-\delta})).
\end{align*}
By choosing first $\mu$ small and then $k$ large, we obtain the desired statement in view of the assumption that $\|u_k-v_k\|_{L^2} \to 0$ as $k \to \infty$.
\end{proof}

We conclude with a proposition which will be needed in the dimension reduction argument of the next section.

\begin{prop} \label{dim}Assume $u$ is constant in the $e_1$ direction, i.e.,  $$u(x_1,\ldots, x_n)=v(x_2, \ldots, x_n).$$ Then $u$ is a minimizer in $\R^n$ if and only if $v$ is a minimizer in $\R^{n-1}.$
\end{prop}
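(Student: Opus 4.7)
The plan is to prove the two implications separately, using slicing for the forward direction and a lifting construction for the converse.

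For the forward direction, $v$ minimizer $\Rightarrow$ $u$ minimizer, fix a ball $B \subset \R^n$ and an admissible competitor $\tilde u \ge 0$ with $\tilde u = u$ on $\p B$. Slicing $B$ by hyperplanes $\{x_1 = t\}$, discarding the nonnegative $\frac 12 (\p_1 \tilde u)^2$ contribution, and applying Fubini give
$$J(\tilde u, B) \ge \int_\R J\bigl(\tilde u(t, \cdot), B_t\bigr)\,dt, \qquad B_t := B \cap \{x_1 = t\}.$$
For a.e.\ $t$, the slice $\tilde u(t, \cdot) \in H^1(B_t)$ is nonnegative and has trace equal to $v$ on $\p B_t$ (since $\tilde u = u = v$ there), hence is an admissible competitor for $v$ on the $(n-1)$-dimensional ball $B_t$. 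Minimality of $v$ yields $J(\tilde u(t, \cdot), B_t) \ge J(v, B_t)$, and integrating back in $t$ recovers $J(\tilde u, B) \ge J(u, B)$.

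For the converse, $u$ minimizer $\Rightarrow$ $v$ minimizer, fix a ball $B' \subset \R^{n-1}$ and a competitor $w \ge 0$ with $w = v$ outside $B'$. The strategy is to lift $w$ to a competitor for $u$ on a very large ball in $\R^n$. For $T \gg 1$ and a small fixed $\mu > 0$, let $\eta \in C^\infty(\R; [0,1])$ satisfy $\eta \equiv 1$ on $[-T, T]$ and $\eta \equiv 0$ off $[-T-\mu, T+\mu]$, and set
$$\tilde w(x_1, x') := \eta(x_1)\, w(x') + (1 - \eta(x_1))\, v(x').$$
Because $w = v$ outside $B'$, $\tilde w$ agrees with $u$ off the slab $K := [-T-\mu, T+\mu] \times B'$, so $\tilde w$ is admissible for $u$ on any ball $B_M \supset K$. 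By minimality of $u$, $J(u, K) \le J(\tilde w, K)$. The inner cylinder $[-T, T] \times B'$ contributes exactly $2T \cdot J(w, B')$ on the right and $2T \cdot J(v, B')$ on the left, so dividing by $T$ and sending $T \to \infty$ yields $J(v, B') \le J(w, B')$---provided the transition energy on $([-T-\mu, -T] \cup [T, T+\mu]) \times B'$ is bounded uniformly in $T$.

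Controlling this transition bound is the only delicate point, and here the singular potential $W(s) = s^{-\gamma}/\gamma$ creates a genuine obstacle. At points $x' \in B'$ where, say, $w(x') = 0$ but $v(x') > 0$, one has $\tilde w(x_1, x') = (1-\eta(x_1))\, v(x')$, which vanishes as $\eta \to 1$ and threatens to make $\int W(\tilde w)\,dx$ infinite. The remedy is the transition profile from Lemma \ref{uv}: on $[T, T+\mu]$ take $\eta(x_1) = 1 - \varphi_\mu(x_1 - T)$ with $\varphi$ behaving like $s^\alpha$ near $0$ and $1 - (1-s)^\alpha$ near $1$. This gives $\int (\eta')^2 \le C/\mu$ together with $\int \eta^{-\gamma} + \int (1 - \eta)^{-\gamma} \le C\mu$, where the crucial integrability relies on $\alpha \gamma = 2\gamma/(\gamma + 2) < 1$ (equivalent to $\gamma < 2$). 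These estimates yield a transition bound of the form $C(\mu)\bigl(J(v, B') + J(w, B') + \mu^{-1} \|v - w\|_{L^2(B')}^2\bigr)$, independent of $T$, completing the argument.
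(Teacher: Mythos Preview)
Your proof is correct and follows essentially the same approach as the paper: slicing for the implication $v$ minimizer $\Rightarrow$ $u$ minimizer, and for the other direction lifting $w$ to a competitor via the special cutoff profile of Lemma~\ref{uv}, then dividing by the slab length and letting it tend to infinity. The only cosmetic differences are that the paper works on cylinders rather than balls in the slicing direction, and in the lifting direction averages over a one-parameter family of translates of the cutoff (mirroring Lemma~\ref{uv}) rather than bounding the transition energy directly for a single fixed profile as you do.
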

\begin{proof}
Assume $u$ is a minimizer in $\R^{n}$ and let $w (x_2,\ldots, x_{n})$ be a function which coincides with $v$ outside $B_K \subset \R^{n-1}.$ Then define $ \tilde u_r$ to be the interpolation between $w$ and $v$ defined in $\mathbb R \times B_K$ as
$$\tilde u_r:= \varphi(|x_1|-r)v(x_2,\ldots, x_{n}) + (1-\varphi(|x_1|-r))w(x_2,\ldots, x_{n}),$$
with $\varphi$ the function defined in \eqref{cut}. 

Notice that $\tilde u_r = w$ if $|x_1| <r$, and $u_r=v$ if $|x_1|>r+1$. From the minimality of $u$ we find
$$ J (v, \mathcal C_R) \le J(\tilde u_r, \mathcal C_R), \quad \mathcal C_R:=[-R,R] \times B_K,$$
provided that $r+1 \le R$. We integrate the inequality above in $r \in [R-2,R-1]$ and obtain
$$ J (v, \mathcal C_R) \le J(w, \mathcal C_{R-2}) + \int_{R-2}^{R-1} J(\tilde u_r, \mathcal C_R \setminus \mathcal C_{R-2}) dr.$$
As in the proof of Lemma \ref{uv}, the integral term above is bounded by a constant depending on $v$, $w$ and the universal constants but independent of $R$. We divide the inequality by $R$, and let $R \to \infty$ to obtain the desired inequality
$$ J (v, B_K) \le J(w, B_K).$$

Conversely, assume that $v$ is a minimizer in $\R^{n-1}.$ Let $w$ be a function of $n$ variables which coincides with $u$ outside of $\mathcal C_R$. Then 
$$J(w, \mathcal C_R) \geq \int_{-R}^R J(w(x_1,\cdot), B_K) dx_1,$$ and by the minimality of $v$,
 $$J(w,  \mathcal C_R) \geq \int_{-R}^R J(v(x_2,\ldots, x_{n}), B_K) dx_1= J(u,\mathcal C_R).$$\end{proof}


\section{Weiss monotonicity formula and consequences}

In this section we prove Weiss monotonicity formula (\cite{W,PSU}) for minimizers
of the energy functional $J$, and derive the partial regularity result Theorem \ref{T3}.

\begin{thm} If $u$ is a minimizer to $J$ in $B_R$ then
$$W_u(r):= r^{-n-2(\alpha-1)} J(u, B_r) - \alpha r^{-(n-1)-2\alpha} \int_{\p B_r} u^2 d\sigma, \quad 0<r\leq R,$$ is increasing in $r$. Moreover, $W_u$ is constant if and only if $u$ is homogeneous of degree $\alpha$.\end{thm}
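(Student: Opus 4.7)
The plan is to follow the classical Weiss monotonicity strategy via the $\alpha$-homogeneous rescaling $u_r(y):=r^{-\alpha}u(ry)$ for $y\in B_1$. First I would use the scaling identities
\[
J(u_r,B_1)=r^{-n-2(\alpha-1)}J(u,B_r),\qquad \int_{\partial B_1}u_r^2\,d\sigma=r^{-(n-1)-2\alpha}\int_{\partial B_r}u^2\,d\sigma,
\]
to rewrite $W_u(r)=J(u_r,B_1)-\alpha\int_{\partial B_1}u_r^2\,d\sigma$. The relevant infinitesimal generator is $\dot u_r := \partial_r u_r = r^{-1}(y\cdot\nabla u_r-\alpha u_r)$, which vanishes identically precisely when $u_r$ (equivalently $u$) is $\alpha$-homogeneous — this is what will drive the rigidity statement.

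Next I would differentiate $W_u$ in $r$. The boundary term gives $\frac{d}{dr}[\alpha\int_{\partial B_1}u_r^2\,d\sigma]=\frac{2\alpha}{r}\int_{\partial B_1}(u_r\partial_\nu u_r-\alpha u_r^2)\,d\sigma$ directly. For the energy,
\[
\frac{d}{dr}J(u_r,B_1)=\int_{B_1}\bigl(\nabla u_r\cdot\nabla\dot u_r+W'(u_r)\dot u_r\bigr)\,dy,
\]
and I would integrate by parts on $\{u_r>0\}$ using the PDE $\Delta u_r=W'(u_r)=-u_r^{-(\gamma+1)}$ (Proposition \ref{opt1}) to cancel the two interior contributions, leaving boundary integrals on $\partial B_1$ and on $F(u_r)\cap B_1$. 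Combining the two pieces and completing the square then produces an expression of the form
\[
W_u'(r)\ \propto\ \frac{1}{r}\int_{\partial B_1}\bigl(\partial_\nu u_r-\alpha u_r\bigr)^2\,d\sigma\ \ge\ 0,
\]
which is exactly the desired monotonicity.

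The main technical obstacle will be rigorously justifying that the free-boundary contribution in the integration by parts vanishes. Writing $u_r=c_0 d^\alpha+R$ near a regular free-boundary point (with $d$ the distance to $F(u_r)$), the key observation is that the leading parts in $y\cdot\nabla u_r$ and $\alpha u_r$ exactly cancel — both equal $\alpha c_0 d^\alpha$ to leading order — so $\dot u_r$ only sees the remainder $R$. The viscosity free-boundary condition of Proposition \ref{min=vis} forbids a $d^{2-\alpha}$ contribution in $R$, and Lemma \ref{u2} (Lipschitz character of $u^2$) controls the cross terms; together they ensure that the surface integrals of $\dot u_r\partial_\nu u_r$ on the level sets $\{u_r=\delta\}$ tend to $0$ as $\delta\to 0^+$. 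I would carry out the integration by parts on the regularized sets $\{u_r>\delta\}$ and pass to the limit.

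Finally, the rigidity is immediate: if $W_u$ is constant on an interval, then $\partial_\nu u_r\equiv\alpha u_r$ on $\partial B_1$ for every $r$ in the interval, which is equivalent to $\partial_\rho\bigl[\rho^{-\alpha}u(\rho\theta)\bigr]\equiv 0$ for $\theta\in S^{n-1}$. Integrating in $\rho$ gives $u(\rho\theta)=\rho^\alpha u(\theta)$ on the corresponding annulus, and the interior PDE then extends the $\alpha$-homogeneity to the whole domain.
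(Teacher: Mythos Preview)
Your approach differs from the paper's at the decisive step, and the difference matters. The paper never integrates by parts using the Euler--Lagrange equation. Instead, it computes $W_u'(1)$ directly from the coarea formula
\[
\frac{d}{dr}J(u,B_r)=\int_{\partial B_r}\Bigl(\tfrac12|\nabla u|^2+W(u)\Bigr)\,d\sigma,
\]
rearranges algebraically, and then compares with the $\alpha$-homogeneous extension $\tilde u(x)=|x|^\alpha u(x/|x|)$ of the boundary trace. Since $\tilde u$ is an admissible competitor, minimality gives $J(u,B_1)\le J(\tilde u,B_1)$, and this single inequality is what produces $W_u'(r)\ge 0$. The free boundary never enters the argument, and no pointwise expansion of $u$ near $F(u)$ is needed.

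Your route, by contrast, has a real gap. The claim that in $\dot u_r=r^{-1}(y\cdot\nabla u_r-\alpha u_r)$ the leading parts ``both equal $\alpha c_0 d^\alpha$'' is incorrect. Near a regular free boundary point $y_0$ with inward normal $\nu$ one has $\nabla u_r\approx \alpha c_0 d^{\alpha-1}\nu$, hence
\[
y\cdot\nabla u_r\approx \alpha c_0\, d^{\alpha-1}(y\cdot\nu)
=\alpha c_0\, d^{\alpha-1}(y_0\cdot\nu)+\alpha c_0\, d^{\alpha}+O(d^{\alpha+1}).
\]
The leading term is $\alpha c_0\, d^{\alpha-1}(y_0\cdot\nu)$, which is \emph{singular} of order $d^{\alpha-1}$ and does not cancel against $\alpha u_r\approx\alpha c_0 d^\alpha$ unless $y_0\cdot\nu=0$. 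Consequently $\dot u_r\sim d^{\alpha-1}$ and the level-set contributions
$\int_{\{u_r=\delta\}}\partial_\nu u_r\,\dot u_r\,d\sigma\sim d^{2(\alpha-1)}$
do not tend to zero (recall $\alpha<1$). The viscosity condition of Proposition~\ref{min=vis} does not help here: it is a comparison statement with radial test functions, not a pointwise two-term expansion valid $\mathcal H^{n-1}$-a.e.\ on $F(u)$. This is precisely the obstruction the authors flag in the introduction when they write that for negative powers the free boundary condition ``cannot be easily detected by integrations by parts or domain variations as in the case of nonnegative exponents.'' The homogeneous-competitor argument is the device that replaces the missing integration by parts.
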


Notice that the optimal regularity Lemma \ref{opt} implies that if $0 \in F(u)$ then $W_u(r)$ is bounded below as $r \to 0$.
\begin{proof} In view of Lemma \ref{u2}, $W_u(r)$ is differentiable for a.e. $r$ and by standard computations
$$\frac{d}{dr} J(u,B_r) = \int_{\p B_r} (\frac 1 2 |\nabla u|^2 + W(u)) dx,$$
while
$$\frac{d}{dr} \left(r^{-(n-1)-2\alpha}\int_{\p B_r} u^2\right) d\sigma = 2r^{-n-2\alpha}\int_{\p B_r}  (r uu_\nu - \alpha u^2) d\sigma.$$
Assume that these equalities are satisfied at $r=1$.
Then,
\begin{align*}\frac{d W_u}{dr} |_{r=1}= \int_{\p B_1}  (\frac 1 2 |\nabla u|^2 + W(u)) d\sigma - & (n+1+2\alpha)J(u,B_1)\\ \ & - 2\alpha\int_{\p B_1}  ( uu_\nu - \alpha u^2) d\sigma,\end{align*}
from which we deduce
\begin{align*}\frac{d W_u}{dr} |_{r=1}= \int_{\p B_1}  (\frac 1 2 u_\tau^2 + W(u)) d\sigma + & \frac{\alpha^2}{2} \int_{\p B_1} u^2d\sigma -  (n+1+2\alpha)J(u,B_1)\\ \ & + \frac 1 2\int_{\p B_1}  (u_\nu - \alpha u)^2 d\sigma.\end{align*}
We claim that 
$$I(u):=\int_{\p B_1}  (\frac 1 2 u_\tau^2 + W(u)) d\sigma +  \frac{\alpha^2}{2} \int_{\p B_1} u^2d\sigma \geq  (n+1+2\alpha)J(u,B_1).$$
Indeed let $$\tilde u(x):= |x|^\alpha u\left(\frac{x}{|x|}\right), \quad x \in B_1$$ the $\alpha$-homogeneous extension of $u|_{\p B_1}$.  Then
$$I(u)= I(\tilde u)= (n+1+2\alpha)J(\tilde u, B_1),$$
where the last equality follows from the computation above for $\frac{d}{dr}W_{\tilde u}$ which is $0$. and our claim follows from minimality. Thus,
$$\frac{d}{dr} W_u(r) \geq 0 , \quad \text{a.e. $r$}.$$ The conclusion follows since $W_u(r)$ is absolutely continuous in $r$. Moreover, the computations above show that $W_u$ is constant if and only if $$u_\nu = \frac{\alpha}{|x|} u, \quad \text{for a.e. $x$,}$$ that is $u$ is homogeneous of degree $\alpha$.
\end{proof}

Next we study the homogeneous global minimizers in the 2D case. 

\begin{prop}
The homogenous of degree $\alpha$ minimizers of $J$ in dimension $n=2$ are rotations of $c_0 (x_1^+)^\alpha$.
\end{prop}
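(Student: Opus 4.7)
The plan is to use the homogeneity to reduce to an ODE on $S^1$, fix the first-integral constant via the viscosity free-boundary condition, classify the resulting profiles, and eliminate the spurious candidate via the dimension-reduction Proposition \ref{dim}.

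Write $u(r,\theta) = r^\alpha g(\theta)$ with $g \ge 0$ on $S^1$. The interior PDE reduces, on each component of $\{g>0\}$, to
\[
g'' + \alpha^2 g = -g^{-(\gamma+1)},
\]
with first integral
\[
(g')^2 + \alpha^2 g^2 - \tfrac{2}{\gamma}\, g^{-\gamma} = \mu,
\]
constant on each such component. By Proposition \ref{min=vis}, $u$ is a viscosity solution, and the analog of the expansion \eqref{oned} reads $g(\theta) = c_0(\theta-\theta_1)^\alpha + a(\theta-\theta_1)^{2-\alpha} + o(\cdot)$ near an endpoint $\theta_1$, with the coefficient $a \in \R$ free at the ODE level. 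Comparing $u$ with barriers of the form $c_0 d^\alpha + \mu d^{2-\alpha}$ in $\mathcal{C}^{\pm}$ at the free boundary point $(r_0,\theta_1)$, and letting the ball radius tend to $\infty$ so the barrier captures the leading normal expansion, forces $a = 0$ at both endpoints. Substituting this expansion into the first integral and using the identity $(2/\gamma) c_0^{-\gamma} = c_0^2 \alpha^2$ (a consequence of $c_0^{\gamma+2} = 1/[\alpha(1-\alpha)]$), the leading singular $\theta^{2\alpha-2}$ contributions of $(g')^2$ and $(2/\gamma)g^{-\gamma}$ cancel exactly, and the endpoint value of the first integral is proportional to $a$; hence $\mu = 0$ on every arc.

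With $\mu = 0$ the reduced equation $(g')^2 = (2/\gamma) g^{-\gamma} - \alpha^2 g^2$ integrates explicitly: the substitution $g = c_0 \sigma^\alpha$, $\sigma \in [0,1]$, collapses the quadrature to $d\theta = d\sigma/\sqrt{1-\sigma^2}$. Therefore every arc of $\{g > 0\}$ has length exactly $\pi$, with profile $g(\theta) = c_0 \sin^\alpha(\theta - \theta_1)$. Since at most two disjoint open arcs of length $\pi$ fit in $S^1$, only two possibilities remain: either $\{g > 0\}$ is a single arc of length $\pi$, giving $u = c_0((x \cdot \nu)^+)^\alpha$ up to rotation, or it consists of two antipodal arcs, giving (after rotation) $u_B(x) := c_0|x_1|^\alpha$.

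The main obstacle is ruling out $u_B$. Since $u_B$ is translation-invariant in $e_2$, Proposition \ref{dim} reduces its minimality in $\R^2$ to the one-dimensional minimality of the profile $v(t) := c_0|t|^\alpha$. This fails: let $w$ be the competitor equal to the constant $c_0 \eps^\alpha$ on $[-\eps,\eps]$ and equal to $v$ outside, which is admissible since it matches $v$ at $t = \pm\eps$. A direct calculation using $(2/\gamma) c_0^{-\gamma} = c_0^2 \alpha^2$ and $2\alpha - 1 = 1 - \alpha\gamma > 0$ gives
\[
J(v,(-\eps,\eps)) - J(w,(-\eps,\eps)) = \frac{c_0^2 \alpha^2\,(3 - 2\alpha)}{2\alpha - 1}\,\eps^{2\alpha - 1} > 0
\]
for $\alpha \in (1/2, 1)$. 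Hence $v$, and therefore $u_B$, is not a minimizer, completing the classification.
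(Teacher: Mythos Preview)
Your proof is correct and follows essentially the same route as the paper's: reduce to the angular ODE, use the first integral, pin down $\mu=0$ via the viscosity free-boundary condition of Proposition~\ref{min=vis} (together with Remark~\ref{comp}), and eliminate the two-arc candidate through Proposition~\ref{dim}. The differences are only in the level of detail: the paper simply observes that $\mu=0$ determines $f$ uniquely up to translation and asserts that $c_0|t|^\alpha$ is not a one-dimensional minimizer, whereas you carry out the explicit quadrature $g=c_0\sin^\alpha(\theta-\theta_1)$ (confirming the arc length $\pi$) and exhibit the constant competitor with the exact energy gap. Two small remarks: you should state explicitly that $g$ cannot be strictly positive on all of $S^1$ (evaluate the ODE at a minimum of $g$), and the phrase ``letting the ball radius tend to $\infty$'' is unnecessary---a single comparison ball of large finite radius already does the job, since the curvature correction enters at order higher than $d^{2-\alpha}$.
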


\begin{proof} Let $u$ be a global minimizer to \eqref{J} in $\mathbb{R}^2$, homogeneous of degree $\alpha$. Then $u(r,\theta)= r^\alpha f(\theta)$ with $f$ a H\"older continuous function solves the following ODE in each interval of $\{f>0\}$
\begin{equation}\label{ODE}f'' + \alpha^2 f= - f^{-(\gamma+1)}.
\end{equation}
Notice that $f$ cannot be positive everywhere as we would contradict the ODE at the minimum point. 
Assume that $(0,a)$ is a maximal interval of 
$\{f>0\}$. 

{\it Claim:} $u=c_0 x_2^ \alpha$ in the half-space $\{x_2 \ge 0\}$.

We multiply the ODE by $f'$ and integrate, and deduce that in $(0,a)$
$$\frac 1 2 (f'^2+\alpha^2 f^2) - W(f)= \mu,$$
for some constant $\mu \in \R.$ We wish to show that $\mu=0$, which gives that $f$ is uniquely determined up to translations, and then the claim follows.

We argue as in Section \ref{the1d}, and deduce that $f$ has the same two first terms in the expansion near $0$ (see \eqref{oned}),
$$f(\theta)=c_0\theta^\alpha+ \mu c_1 \theta^{2-\alpha} + O(\theta^{\sigma}), \quad \quad \sigma>2-\alpha,$$
with $c_1>0$ a universal constant.
In view of Theorem \ref{min=vis} and Remark \ref{comp}, $u$ satisfies the free boundary condition at $e_1 \in F(u)$. This gives $\mu=0$ and the claim is proved.

It remains to rule out the case when $u=c_0|x_2|^\alpha$. This follows from Proposition \ref{dim}, and the fact that $c_0 |t|^\alpha$ is not a minimizer of $J$ in $\mathbb R$.

\end{proof}

In the next section we show that if a viscosity solution is close to the one-dimensional solution then its free boundary is $C^{1,\beta}$ (see Proposition \ref{P1}). As a consequence we have the following result.

\begin{thm}\label{T5}
Let $u$ be a minimizer of $J$ in $B_1$ such that $$\|u- c_0(x_n^+)^\alpha \|_{L^\infty} \le \eps_0(\gamma,n).$$ Then $F(u) \cap B_{1/2}$ is a $C^{1,\beta}$ graph in the $x_n$ direction. 
\end{thm}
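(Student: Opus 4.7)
The plan is to reduce Theorem \ref{T5} to a pointwise application of Proposition \ref{P1} at every point of $F(u) \cap B_{1/2}$. Since minimizers are viscosity solutions by Proposition \ref{min=vis}, Proposition \ref{P1} applies; the work amounts to (i) verifying its flatness hypothesis at each free boundary point after a suitable translation and rescaling, and (ii) assembling the local graph representations into a single graph.

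First I would establish a thin-strip localization: $F(u) \cap B_{1/2} \subset \{|x_n| \le C \eps_0^{1/\alpha}\}$ with $C$ universal. The upper bound is immediate, since at a free boundary point $x_0$ the identity $u(x_0) = 0$ combined with $u(x_0) \ge c_0 (x_{0,n}^+)^\alpha - \eps_0$ forces $c_0 (x_{0,n}^+)^\alpha \le \eps_0$. The lower bound uses the strong non-degeneracy of Proposition \ref{SND}, applied after translation and rescaling to $x_0 \in B_{1/2}$: it yields $\max_{\partial B_r(x_0)} u \ge c r^\alpha$ for $r \le 1/4$, while the flatness assumption produces $\max_{\partial B_r(x_0)} u \le c_0 \bigl((x_{0,n} + r)^+\bigr)^\alpha + \eps_0$. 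If $x_{0,n} < -r$, these are compatible only when $r \le (\eps_0/c)^{1/\alpha}$, which after optimizing in $r$ gives $x_{0,n} \ge -C \eps_0^{1/\alpha}$.

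For the main step, I would fix any $x_0 \in F(u) \cap B_{1/2}$ and pass to the translated and rescaled minimizer $v(y) := R^{-\alpha} u(x_0 + R y)$ with $R = 1/4$, which is a minimizer in $B_1$ with $0 \in F(v)$. Combining the elementary $\alpha$-Hölder bound $|(a^+)^\alpha - (b^+)^\alpha| \le |a - b|^\alpha$ with the strip estimate $|x_{0,n}| \le C \eps_0^{1/\alpha}$ from the previous step yields
\[
\|v - c_0 (y_n^+)^\alpha\|_{L^\infty(B_1)} \le R^{-\alpha}\bigl(\eps_0 + c_0 |x_{0,n}|^\alpha\bigr) \le \tilde C \eps_0.
\]
Choosing $\eps_0$ so that $\tilde C \eps_0$ lies below the flatness threshold of Proposition \ref{P1}, that proposition produces a $C^{1,\beta}$ graph representation of $F(v)$ in $B_{1/2}$, with normal at the origin within $\eta(\eps_0) \to 0$ of $e_n$. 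Undoing the rescaling gives a local $C^{1,\beta}$ graph of $F(u)$ around each $x_0$, with all normals within $\eta(\eps_0)$ of $e_n$.

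Finally, I would combine the uniform thin-strip bound with these local graph representations to produce a single global graph in the $x_n$ direction on $B_{1/2}$. On one hand, $u > 0$ on $\{x_n > C \eps_0^{1/\alpha}\} \cap B_{1/2}$ (from $u \ge c_0 x_n^\alpha - \eps_0$), while the thin-strip bound prevents any free boundary from occurring in $\{x_n < -C \eps_0^{1/\alpha}\}$, so every vertical line $\{x' = \mathrm{const}\} \cap B_{1/2}$ crosses $F(u)$ at least once. On the other hand, two distinct crossings on such a line would lie within a vertical segment of length at most $2 C \eps_0^{1/\alpha}$, which is incompatible with a $C^1$-graph whose normal is within $\eta(\eps_0)$ of vertical. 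The main obstacle is essentially the transfer of the flatness hypothesis from the origin to an arbitrary free boundary point; once the strip bound is in hand this becomes a short computation, and Proposition \ref{P1} does the heavy lifting.
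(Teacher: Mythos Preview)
Your strategy matches the paper's: use non-degeneracy to confine $F(u)$ to a thin strip and then feed the resulting flatness into Proposition~\ref{P1}. The paper is even terser, writing only that non-degeneracy traps $u$ between two nearby translations of $c_0(x_n^+)^\alpha$, and then applying Proposition~\ref{P1} to $w := (u/c_0)^{1/\alpha}$.

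One interface point deserves attention. Proposition~\ref{P1} is formulated for $w$ solving \eqref{weq} under the two-sided trapping $(x_n-\eps)^+ \le w \le (x_n+\eps)^+$, not for $u$ under an $L^\infty$ bound on $u - c_0(x_n^+)^\alpha$. Your conclusion $\|v - c_0(y_n^+)^\alpha\|_{L^\infty} \le \tilde C\eps_0$ does not by itself place you in that hypothesis: it does not force $v$ to vanish on $\{y_n < -\eps\}$, which is part of the upper bound $w \le (y_n+\eps)^+$. The missing ingredient is precisely the one the paper invokes (and which your strip argument almost yields): from $u \le \eps_0$ on $\{x_n \le 0\}$ and the lemma following Proposition~\ref{SND}, one gets $u \equiv 0$ on $\{x_n \le -C\eps_0^{1/\alpha}\}$ in an interior ball. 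Together with your strip bound and the elementary inequality $(t^\alpha + \delta)^{1/\alpha} \le t + C\delta$ for $t \in [0,1]$, this gives the trapping $c_0((x_n-\eps)^+)^\alpha \le u \le c_0((x_n+\eps)^+)^\alpha$ with $\eps \sim \eps_0$, hence the hypothesis of Proposition~\ref{P1} for $w$. Once this conversion is made explicit, your argument goes through; note also that Proposition~\ref{P1} is a single improvement-of-flatness step, and the $C^{1,\beta}$ graph is obtained by iterating it, which both you and the paper leave implicit.
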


 Indeed, from the non-degeneracy property of minimizers we conclude that $u$ is trapped between two translations of $(x_n^+)^\alpha$ and then Proposition \ref{P1} of the next section applied to $w:= (u/c_0)^{1/\alpha}$ gives the interior $C^{1,\beta}$ estimate for $F(u)$.
 
A standard consequence of the Weiss monotonicity formula together with the compactness of minimizers is the following energy gap for cones. 
  
  \begin{prop}[Energy gap]
  Assume that $U$ is a cone, i.e. nonzero homogenous of degree $\alpha$ minimizer of $J$, and let $U_0:= (x_n^+)^\alpha$ denote the one-dimensional solution. If $U$ is not one-dimensional then
  $$W(U) \ge W(U_0) + \delta,$$
  for some $\delta>0$ universal depending only on $n$ and $\gamma$. 
  \end{prop}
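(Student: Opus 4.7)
We argue by contradiction via compactness. Suppose no such gap exists; then there is a sequence of $\alpha$-homogeneous minimizer cones $U_k$, none of which is a rotation of $U_0$, with $W(U_k) \to W(U_0)$. The strong non-degeneracy (Proposition~\ref{SND}) and optimal regularity (Lemma~\ref{opt}) give uniform $C^\alpha$ bounds on the $U_k$ over each fixed ball, so after extracting a subsequence, $U_k \to U_\infty$ locally uniformly in $\R^n$. By the compactness of minimizers from Section~5, $U_\infty$ is a minimizer; $\alpha$-homogeneity and the non-degeneracy bound $\max_{\partial B_r} U_k \ge c\,r^\alpha$ both pass to the limit, so $U_\infty$ is a nonzero $\alpha$-homogeneous minimizer, i.e.\ a cone.

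Next I would pass the Weiss energy to the limit. The surface integral $\int_{\partial B_r} U_k^2$ converges to $\int_{\partial B_r} U_\infty^2$ from uniform convergence and uniform $L^\infty$ bounds. For the bulk $J$-energy, lower semicontinuity yields $J(U_\infty, B_r) \le \liminf J(U_k, B_r)$, while the interpolation of Lemma~\ref{uv} combined with minimality of the $U_k$ and Lemma~\ref{EB} gives the matching upper bound for a.e.\ $r$. Since each $U_k$ is $\alpha$-homogeneous, $W_{U_k}(r) = W(U_k)$ is constant in $r$; passing to the limit gives $W_{U_\infty}(r) \equiv W(U_0)$ and in particular $W(U_\infty) = W(U_0)$.

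We then confront a dichotomy. In the easier case where $U_\infty$ is a rotation of $U_0$, say $U_\infty(x) = c_0(x_n^+)^\alpha$ after rotating, the local uniform convergence gives $\|U_k - U_\infty\|_{L^\infty(B_1)} \le \eps_0$ for all large $k$, so Theorem~\ref{T5} implies that $F(U_k) \cap B_{1/2}$ is a $C^{1,\beta}$ graph. Since $U_k$ is $\alpha$-homogeneous, $F(U_k)$ is a cone through the origin; the only such cone which is also a $C^{1,\beta}$ graph in a neighborhood of $0$ is a hyperplane. Then $F(U_k) = \{x \cdot \nu_k = 0\}$, and the one-dimensional ODE analysis of Section~\ref{the1d} (with the second-order coefficient $\mu = 0$ forced by minimality) identifies $U_k$ with the rotation $c_0((x \cdot \nu_k)^+)^\alpha$, contradicting the standing assumption on $U_k$.

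The main obstacle is the remaining case, when $U_\infty$ is a cone that is not a rotation of $U_0$ but still satisfies $W(U_\infty) = W(U_0)$. The plan to rule this out is a Federer-style dimension reduction on $U_\infty$: by Theorem~\ref{T3}, $F(U_\infty)$ is a cone with singular set of Hausdorff dimension at most $n-3$, and the blow-up of $U_\infty$ at any free boundary point $y_0$ with $|y_0| = 1$ is, by Weiss monotonicity, an $\alpha$-homogeneous minimizer whose Weiss energy does not exceed $W(U_0)$. The $\alpha$-homogeneity of $U_\infty$ about $0$ renders this blow-up translation invariant along $y_0$, so Proposition~\ref{dim} lets it descend to an $\alpha$-homogeneous minimizer cone in $\R^{n-1}$. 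Iterating this reduction and invoking the two-dimensional classification from Section~6 as the base case closes the argument by induction on $n$: in dimension two the only such cones are rotations of $U_0$, forcing all iterated tangent cones to be one-dimensional, hence forcing $F(U_\infty)$ to be a hyperplane and contradicting the case assumption.
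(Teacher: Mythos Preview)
Your overall plan (compactness, then Theorem~\ref{T5} when the limit is flat, dimension reduction otherwise) is the standard argument the paper alludes to without writing out, and your handling of the flat case is correct. The non-flat case, however, has a real gap. First, invoking Theorem~\ref{T3} here is circular: in the paper's logical order Theorem~\ref{T3} is deduced \emph{from} the energy gap via Federer's reduction. This is easily bypassed since any $y_0\in F(U_\infty)\setminus\{0\}$ suffices for the blow-up. The substantive problem is the inference ``all iterated tangent cones are one-dimensional, hence $F(U_\infty)$ is a hyperplane.'' Knowing that the blow-up at every $y_0\neq 0$ is flat only tells you $F(U_\infty)$ is smooth away from the vertex; smooth cones through the origin need not be hyperplanes.

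What is missing is the \emph{rigidity} in Weiss monotonicity. Once the blow-up of $U_\infty$ at some $y_0$ is a rotation of $U_0$ you have $W_{U_\infty,y_0}(0^+)=W(U_0)$, while the blow-down of a cone at any point is the cone itself, so $W_{U_\infty,y_0}(+\infty)=W(U_\infty)\le W(U_0)$. Monotonicity then forces $W_{U_\infty,y_0}$ to be constant, hence $U_\infty$ is $\alpha$-homogeneous about $y_0$ as well as about $0$, and therefore translation-invariant in the $y_0$ direction. Now $U_\infty$ itself---not merely its blow-up---descends via Proposition~\ref{dim} to a cone in $\R^{n-1}$, and the induction on $n$ closes cleanly. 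One further technical point you skip: when descending a translation-invariant cone $V(x_1,x')=\tilde V(x')$, you need that $W^{(n)}(V)-W^{(n)}(U_0)$ equals a positive multiple of $W^{(n-1)}(\tilde V)-W^{(n-1)}(U_0)$, so that the comparison transfers between dimensions; this is a direct computation using $\alpha$-homogeneity, but it should be verified.
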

 
  At this point we have all the ingredients to perform the dimension reduction argument of Federer \cite{F}. The next results follow from the standard techniques in free boundaries and we omit the proofs (see \cite{DS1} for more details).

 \begin{thm}
 Let $u$ be a minimizer for $J$ in $B_1$. Then 
 $$ \mathcal H^{n-1}(F(u) \cap B_{1/2}) \le C(n, \gamma) $$
 and $F(u)$ is locally $C^{1,\beta}$ except on a closed singular set $\Sigma_u \subset F(u)$ of Hausdorff dimension $n-3$. 
 \end{thm}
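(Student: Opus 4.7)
The theorem is the culmination of a Federer dimension reduction argument using the ingredients already established. For each $x_0 \in F(u) \cap B_{1/2}$, Lemma \ref{EB}, Theorem \ref{T2} and Lemma \ref{opt} imply that $W_u(r)$ is bounded above and below as $r \to 0$, so by the Weiss monotonicity formula the density $\Theta(x_0) := W_u(0^+)$ exists. Non-degeneracy together with the compactness of minimizers (Section 5) then guarantees that every blow-up sequence $u_{\lambda_k,x_0}(y) := \lambda_k^{-\alpha}u(x_0 + \lambda_k y)$ converges, up to subsequence, to a homogeneous global minimizing cone $U$ with $W(U) = \Theta(x_0)$. I define the singular set $\Sigma_u \subset F(u)$ as the set of points where no blow-up is a rotation of the one-dimensional solution $U_0 = c_0(x_n^+)^\alpha$. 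By the energy gap proposition, $\Sigma_u = \{\Theta \ge W(U_0) + \delta\}$, and since $\Theta$ is upper semicontinuous on $F(u)$ (a standard consequence of the monotonicity formula and uniform convergence on compacts), $\Sigma_u$ is closed.

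At any $x_0 \in F(u) \setminus \Sigma_u$, every blow-up limit has Weiss density $W(U_0)$ and is therefore a half-space solution. For $\lambda$ sufficiently small the rescaling $u_{\lambda,x_0}$ is $L^\infty$-close to a rotation of $c_0 (x_n^+)^\alpha$, so Theorem \ref{T5} applies and $F(u)$ is a $C^{1,\beta}$ graph in a neighborhood of $x_0$. This gives the regularity away from $\Sigma_u$.

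For the dimension bound on $\Sigma_u$, I run Federer's reduction. Let $k^\star$ be the smallest integer $k$ admitting a nontrivial singular homogeneous minimizing cone in $\R^k$. The one-dimensional analysis of Subsection \ref{the1d} and the two-dimensional classification proposition rule out $k \le 2$, so $k^\star \ge 3$. The standard reduction scheme then proceeds: blow up at an $\mathcal H^s$-generic point of $\Sigma_u$, use compactness to extract a singular homogeneous cone $U$ whose own singular set still carries positive $\mathcal H^s$-measure, apply Proposition \ref{dim} to strip off each translation-invariant direction of $U$, and iterate on a singular point of the reduced cone. The iteration must terminate in a dimension $\ge k^\star$, which yields
\[
\dim_{\mathcal H}(\Sigma_u) \le n - k^\star \le n - 3.
\]

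For the surface measure bound, non-degeneracy and optimal regularity give $|\{u > 0\} \cap B_r(x_0)| \ge c r^n$ at every $x_0 \in F(u) \cap B_{1/2}$. The complementary bound $|\{u = 0\} \cap B_r(x_0)| \ge c r^n$ is obtained by the calibration/comparison strategy of Proposition \ref{SND}: if the zero set in some $B_r(x_0)$ had density below a small universal threshold, a rescaled version of the barrier $\phi$ used there would force $u \equiv 0$ in a sub-ball, contradicting $x_0 \in F(u)$. These two-sided bounds make $F(u)$ $(n-1)$-Ahlfors regular, and a Vitali covering argument then delivers $\mathcal H^{n-1}(F(u) \cap B_{1/2}) \le C(n,\gamma)$. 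The most delicate step in this program is the Federer reduction itself, namely extracting a blow-up at a generic singular point that is both singular and translation-invariant in a new direction, so that Proposition \ref{dim} can peel off a dimension while preserving the Hausdorff mass of the singular set; this step is classical but relies crucially on the compactness of minimizers from Section 5 and on the upper semicontinuity of $\Theta$.
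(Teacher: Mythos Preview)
Your outline is essentially what the paper intends: the paper itself omits the proof entirely, writing only that ``the next results follow from the standard techniques in free boundaries and we omit the proofs (see \cite{DS1} for more details),'' after having assembled exactly the ingredients you list (Weiss monotonicity, compactness of minimizers, the energy gap, the 2D classification, Proposition~\ref{dim}, and Theorem~\ref{T5}). Your Federer reduction sketch, the definition of $\Sigma_u$ via the density gap, and the upper semicontinuity argument are all the standard route and match the paper's indicated approach.

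One point to correct: your justification of the zero-set density estimate is backwards. You write that if $|\{u=0\}\cap B_r(x_0)|$ is below a small threshold then the barrier from Proposition~\ref{SND} would force $u\equiv 0$ in a sub-ball. But the lemma behind Proposition~\ref{SND} forces $u\equiv 0$ when $u$ is \emph{small} on a sphere, not when the zero set is small; small zero-set density says the opposite. The correct direction is that small zero-set density should force $u>0$ in a sub-ball (a clean-out from the positive side), contradicting $x_0\in F(u)$. This typically comes from an energy comparison: replace $u$ in $B_r(x_0)$ by a strictly positive competitor built from the radial solution, and use that the potential cost over the tiny zero set is negligible while the competitor has controlled energy. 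Alternatively one can bypass two-sided density and get the $\mathcal H^{n-1}$ bound directly from the uniform energy bound (Lemma~\ref{EB}) together with a lower bound $J(u,B_r(x_0))\ge c\,r^{n-1}$ at free boundary points, which is how the analogous step is done in \cite{DS1}. Either way this is a routine fix; the rest of your plan is sound.
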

 
 
\section{Improvement of flatness}

In this section we show that viscosity solutions to our problem that are sufficiently close to a 1D solution have $C^{1,\beta}$ free boundaries.
We follow the same strategy as in our previous work \cite{DS2} which includes the case of positive powers. 

After a change of variables we rewrite the equation in the form
$$\triangle w = \frac{h(\nabla w)}{w}.$$
Precisely, we denote 
$$ w:= c_0^ {- \frac 1 \alpha} u^\frac 1 \alpha,$$
so that $u = c_0 w^\alpha$. The equation for $w$ is
$$c_0 \alpha w ^{\alpha -2} \left( w\triangle w + (\alpha -1)  |\nabla w|^2 \right) = - c_0^{-(\gamma+1)} w^{-\alpha(\gamma+1)},$$
and using \eqref{a}
\begin{equation}\label{weq}
 \triangle w = (1-\alpha) \frac{|\nabla w|^2 -1}{w} = :\frac{h(\nabla w)}{w}.
 \end{equation}
Here $h$ is a radial quadratic function which vanishes on $\p B_1$, it is negative in $B_1$ and positive outside $B_1$ and 
\begin{equation}\label{nablah}
 \nabla h( \omega) = - s \, \omega, \quad \mbox{if} \quad \omega \in \p B_1, \quad s :=2(\alpha-1) \in (-1,0). 
 \end{equation}
Notice that \eqref{weq} remains invariant under the rescaling
$$\ \tilde w(x)=\frac {w(rx)} {r}.$$
In view of the viscosity definition for $u$, we find that $w$ satisfies \eqref{weq} with the following free boundary condition on $\p \{w>0\}$:

\begin{defn}\label{defnw} We say that $w: \Omega \to \mathbb R^+$ satisfies \eqref{weq} in the viscosity sense, if $w$ is $C^\infty$ and satisfies the equation in the set $\{w>0\} \cap \Omega$ and, if $x_0 \in F(w):= \p \{w>0\} \cap \Omega$, then $w$ cannot touch $\psi \in \mathcal C^+$ (resp. $\mathcal C^-$) by above (resp. below) at $x_0$, with $$\psi(x):= d(x) + \mu \,  d(x)^{3 - 2 \alpha},$$ $\alpha$ as in \eqref{a} and $\mu>0$ (resp $\mu<0$).
\end{defn}
Notice that 
$$ 3 - 2 \alpha = 1-s \, >1. $$
\begin{prop}\label{P1}
Assume that $w$ is a viscosity solution of \eqref{weq} in $B_1$, and
$$ (x_n- \eps)^+ \le w \le (x_n+ \eps)^+, \quad \quad 0 \in F(w),$$
for some $\eps \le \eps_0(\gamma,n)$ small, universal. Then, there exists $r$ universal such that
$$ (x \cdot \nu - \frac \eps 2 r)^+ \le w \le (x \cdot \nu + \frac \eps 2 r)^+ \quad \mbox{in $B_r$},$$
for some unit direction $\nu$, $|\nu|=1$ and $|\nu-e_n| \le C \eps$.
\end{prop}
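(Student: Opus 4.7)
The plan is the standard compactness-linearization scheme for improvement of flatness, following \cite{DS2} but adapted to the degenerate free boundary condition in Definition \ref{defnw}. Argue by contradiction: suppose there exist $\eps_k \to 0$ and viscosity solutions $w_k$ of \eqref{weq} with $0 \in F(w_k)$ and $|w_k - x_n^+| \le \eps_k$ in $B_1$, but no unit direction $\nu$ close to $e_n$ yields the improved flatness at a fixed universal scale $r$ (to be chosen later). Define the normalized graphs
\[\tilde w_k(x) := \frac{w_k(x) - x_n}{\eps_k}\]
on $B_1 \cap \{w_k > 0\}$, and extend them across $F(w_k)$ so that the free boundary is parametrized as $x_n = -\eps_k \tilde w_k(x',0^+)$. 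The goal is to pass to a limit $\tilde w$ solving an explicit linearized problem, use its $C^{1,\beta}$ regularity at the origin to find a linear approximation, and contradict the failure of the conclusion at a suitable universal scale $r$.

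Formally linearizing \eqref{weq} around $w_0 = x_n^+$ by writing $w = x_n + \eps \tilde w$ and using $h(e_n) = 0$ together with \eqref{nablah}, the limiting interior equation reads
\[x_n \Delta \tilde w + s\, \partial_n \tilde w = 0, \quad \text{equivalently} \quad \mathrm{div}(x_n^s \nabla \tilde w) = 0, \quad \text{in } \{x_n > 0\},\]
with $s = 2(\alpha-1) \in (-1,0)$. The barrier $\psi = d + \mu\, d^{1-s}$ of Definition \ref{defnw} detects precisely the coefficient of $x_n^{1-s}$ in the boundary expansion of $\tilde w$, so the free boundary condition becomes the weighted Neumann condition $\lim_{x_n \to 0^+} x_n^s \partial_n \tilde w = 0$. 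Extending $\tilde w$ by even reflection across $\{x_n = 0\}$ yields a solution of $\mathrm{div}(|x_n|^s \nabla \tilde w) = 0$ on $B_{1/2}$. Since $s \in (-1,0)$, the weight lies in the Muckenhoupt $A_2$ class, so by Fabes--Kenig--Serapioni type $C^{1,\beta}$ regularity,
\[\tilde w(x) = \tilde w(0) + \xi \cdot x' + O(|x|^{1+\beta}),\]
with no $x_n$-linear term by the reflection symmetry. Choosing $r$ universal sufficiently small, this first-order Taylor polynomial produces, after rescaling by $\eps_k$, an improved flat direction $\nu \simeq (-\eps_k \xi, 1)/|(-\eps_k \xi,1)|$ for $w_k$ at scale $r$, contradicting the choice of the $w_k$.

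\emph{The main obstacle is the uniform H\"older estimate for $\tilde w_k$ up to $\{x_n = 0\}$}, which legitimizes the limit $\tilde w$. I would prove it via a partial Harnack inequality at dyadic scales: at each scale either the graph of $w_k$ or its free boundary must improve its flatness by a universal factor relative to the 1D solution. In the positive-power regime treated in \cite{DS2} one calibrates against translates $c_0(x_n - t)^\alpha$, whose free boundary is merely shifted by $t$; in our setting translation detects only the principal coefficient, while the free boundary condition is encoded in the second-order term at the non-standard rate $d^{1-s}$ with $1-s > 1$. The remedy is to calibrate $w_k$ from above and below against the sharper barriers $\phi = d + \tfrac{\mu}{2} d^{1-s} + d^\sigma$ of Lemma \ref{strict}, and to combine the one-sided touching rule of Definition \ref{defnw} with the weak non-degeneracy of Remark \ref{r2} and the calibration mechanism already used in Proposition \ref{min=vis} to iterate the dichotomy across all scales.
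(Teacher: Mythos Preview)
Your overall architecture is the same as the paper's: compactness via a partial Harnack inequality, passage to the linearized problem \eqref{LiE}, and $C^{1,\sigma}$ regularity at the origin to manufacture the improved direction $\nu$. The linearized equation and the identification of the free boundary condition with the weighted Neumann condition $\partial_{x_n^{1-s}}\tilde w=0$ are both correct, and your even-reflection route to $C^{1,\beta}$ is essentially how Theorem \ref{T7.2} is proved in \cite{DS2}, so there is no real divergence there (though ``Fabes--Kenig--Serapioni'' only gives $C^\alpha$; you need the additional structure of the specific weight $|x_n|^s$ to get $C^{1,\beta}$ in the tangential variables).

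The genuine gap is in your sketch of the partial Harnack step. You propose to ``combine the one-sided touching rule of Definition \ref{defnw} with \ldots\ the calibration mechanism already used in Proposition \ref{min=vis}''. But Proposition \ref{P1} is stated for \emph{viscosity} solutions, not minimizers, and the calibration in Proposition \ref{min=vis} is an energy argument that has no meaning without minimality. You cannot invoke it here. What the paper actually does (Lemma \ref{HI}) is build an explicit radial subsolution
\[
\Psi = d + \mu\eps\bigl(d^{1-s} + A\,d^{\beta}\bigr), \qquad 1-s<\beta\le\min\{1-2s,2\},
\]
where $d$ is the distance to a ball of radius $\tfrac{1}{\mu\eps}$ tangent to $\{x_n=0\}$ at $0$, and $A$ is chosen large so that $\Psi$ is a strict subsolution of \eqref{weq} and satisfies the subsolution free boundary condition of Definition \ref{defnw}. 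One then slides $\Psi$ in the $e_n$ direction and uses the viscosity comparison, together with interior Harnack for $\tilde w_k$ away from $\{x_n=0\}$, to push the lower bound on $w_k-x_n$ across the strip $\{0\le x_n\le\sigma\}$. The barriers $\phi$ of Lemma \ref{strict} that you cite are centered on a fixed ball and are not flat enough at scale $\eps$ to do this; the point of the $\tfrac{1}{\mu\eps}$-radius ball is precisely that $\Psi$ is within $O(\eps)$ of $x_n^+$ on the cylinder $\mathcal C$, so it can be compared with $w_k$ there. Replace your calibration sentence with this explicit barrier construction and the plan is complete.
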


The strategy is to show that the rescaled function
\begin{equation}\label{tidw}
 \tilde w = \frac{w-x_n}{\eps} 
 \end{equation}
is well approximated by a viscosity solution of the linearized equation
\begin{equation}\label{LiE}
\begin{cases}
\Delta \varphi + s \dfrac{ \varphi_n}{x_n}=0, \quad \text{in $B_{1}^+,$}\\
\frac{\partial }{\partial x_n^{1-s}} \, \varphi =0 \quad \text{on $\{x_n=0\}$}.
\end{cases}
\end{equation}
We recall the definition from \cite{DS2} that $\varphi: \overline {B}_1^+ \to \mathbb R$ is a viscosity solution of the equation above if 

a) $\varphi$ is continuous up to the boundary 

b) $\varphi$ satisfies the equation above in $B_1^+$ in the classical sense 

c) $\varphi$ cannot be touched by below (above) at a point on $\{x_n =0\} \cap B_1$ by a test function
$$q(x):= a |x'- y'|^2 + b + p x_n^{1-s}, \quad \quad a,b \in \mathbb R, y' \in \mathbb {R}^{n-1},$$
with $p>0$ ($p<0$.) 

\

In \cite{DS2} we proved the following $C^{1,\sigma}$ estimate for solutions of \eqref{LiE}, see Theorem 7.2 in \cite{DS2}.

\begin{thm}\label{T7.2}
Assume that $\varphi$ is a solution of \eqref{LiE}, and $s > -1$. Then
$$|\varphi(x)- \varphi(0) - a' \cdot x' | \le C \|\varphi\|_{L^\infty} \,  |x|^{1+\sigma},$$
with $C$ large, $\sigma >0$ small, depending only on $n$ and $s$.
\end{thm}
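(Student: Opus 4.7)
The plan is to prove Theorem \ref{T7.2} by a compactness-plus-Liouville argument, which is the standard strategy for boundary $C^{1,\sigma}$ estimates of this family of degenerate/singular elliptic equations. My first step is to establish a universal H\"older estimate up to the flat boundary for solutions of \eqref{LiE}. For this I would exploit the divergence-form rewriting
\[
\operatorname{div}(x_n^s \nabla \varphi) = 0 \quad \text{in } B_1^+,
\]
and observe that the viscosity boundary condition $\partial_{x_n^{1-s}}\varphi = 0$ is compatible with even reflection across $\{x_n = 0\}$. Setting $\tilde\varphi(x', x_n) := \varphi(x', |x_n|)$ produces a weak solution of $\operatorname{div}(|x_n|^s \nabla \tilde\varphi) = 0$ on the full ball $B_1$. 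Since $s = 2(\alpha-1) \in (-1,0)$, the weight $|x_n|^s$ is Muckenhoupt $A_2$, so Fabes--Kenig--Serapioni-type De Giorgi--Nash--Moser theory yields a universal H\"older estimate $\|\tilde\varphi\|_{C^\beta(B_{1/2})} \le C\|\varphi\|_{L^\infty(B_1^+)}$ depending only on $n$ and $s$.

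The heart of the argument is a Liouville theorem: any global viscosity solution $\varphi$ of \eqref{LiE} on $\overline{\mathbb R^n_+}$ satisfying a strictly subquadratic growth bound $|\varphi(x)| \le C(1 + |x|^{1+\sigma})$ for some small $\sigma>0$ must take the form $\varphi(x) = b + a'\cdot x'$. I would prove this by separation of variables in polar coordinates. Homogeneous solutions take the form $r^\lambda \Psi(\theta)$, where $\Psi$ solves a Sturm--Liouville problem on the upper hemisphere driven by the weighted operator coming from $x_n^s$, with a Neumann-type endpoint condition on the equator enforced by the viscosity condition $\partial_{x_n^{1-s}}\varphi = 0$. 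A direct computation identifies the admissible homogeneities below $2$: $\lambda=0$ (constants) and $\lambda=1$ (the tangential coordinates $x_1,\dots,x_{n-1}$, since $x_n$ itself fails the equation and the competing mode $x_n^{1-s}$ is killed by the boundary condition). Hence any solution with growth $o(|x|^2)$ lies in the finite-dimensional span of $\{1, x_1,\dots,x_{n-1}\}$.

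With these two ingredients the theorem follows by the usual contradiction-compactness scheme. Assume the estimate fails. Then one can produce solutions $\varphi_k$ on $B_1^+$, dyadic radii $r_k \to 0$, and affine functions $\ell_k(x) = \varphi_k(0) + a_k'\cdot x'$ such that the defect
\[
\theta_k := \sup_{B_{r_k}^+} |\varphi_k - \ell_k|
\]
exceeds $k\, r_k^{1+\sigma}\|\varphi_k\|_{L^\infty}$ and is, by a standard minimality selection of $\ell_k$ among affine functions of $x'$, the worst at all dyadic scales $\ge r_k$. The rescalings $\tilde\varphi_k(x) := \theta_k^{-1}[\varphi_k(r_k x) - \ell_k(r_k x)]$ then satisfy $\|\tilde\varphi_k\|_{L^\infty(B_1^+)} = 1$, solve \eqref{LiE} on $B_{1/r_k}^+$, and obey polynomial bounds $|\tilde\varphi_k(x)| \le C|x|^{1+\sigma}$ on dyadic annuli by construction. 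Step 1 gives local equi-H\"older bounds, so after extracting a subsequence $\tilde\varphi_k \to \tilde\varphi_\infty$ uniformly on compact subsets of $\overline{\mathbb R^n_+}$; stability of viscosity solutions under uniform limits (tested against the family $q(x)=a|x'-y'|^2 + b + p x_n^{1-s}$) preserves both the interior equation and the boundary condition, so $\tilde\varphi_\infty$ is a global viscosity solution with the inherited growth. The Liouville theorem forces $\tilde\varphi_\infty(x) = b + a'\cdot x'$, and by the minimality in the choice of $\ell_k$ this affine part must vanish, giving $\tilde\varphi_\infty \equiv 0$ and contradicting $\|\tilde\varphi_\infty\|_{L^\infty(B_1^+)}=1$.

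The main obstacle is the Liouville theorem. The subtlety is that viscosity solutions are only assumed continuous and the admissible test class at the boundary is restricted to $q(x) = a|x'-y'|^2 + b + p x_n^{1-s}$, so one cannot a priori Taylor-expand $\varphi$ in $x_n^{1-s}$ from the definition. The trick is to combine the smoothness of $\varphi$ in the open half-space $\{x_n>0\}$, which follows from the smooth coefficients of \eqref{LiE} away from the boundary, with the one-sided touching conditions on the test family $q$ to rule out the $x_n^{1-s}$ mode in the spherical decomposition, and then carefully set up the hemispherical Sturm--Liouville problem for the weighted operator and catalog all admissible exponents. Once the classification of homogeneities below $2$ is in hand, the rest of the argument runs along standard lines.
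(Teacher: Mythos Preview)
The paper does not prove Theorem~\ref{T7.2} at all; it is quoted verbatim from the authors' earlier work \cite{DS2} (see the sentence immediately preceding the statement), so there is no in-paper proof to compare against. Your compactness-plus-Liouville scheme is exactly the standard route for this class of boundary estimates and is, in outline, what is carried out in \cite{DS2}: a uniform H\"older bound up to $\{x_n=0\}$, a classification of global solutions with subquadratic growth, and a blow-up contradiction.

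Two remarks on your execution. First, the theorem as stated allows any $s>-1$, while your even-reflection-into-$A_2$ argument for the H\"older step requires $|x_n|^s\in A_2$, i.e.\ $s<1$; for $s\ge 1$ one has to argue directly in the half-space. In the present paper $s=2(\alpha-1)\in(-1,0)$, so your version suffices for the application. Second, your Liouville step via spherical separation of variables presupposes more structure than the viscosity definition gives directly; the cleaner path, once you have the $A_2$ reflection in hand, is to note that the reflected $\tilde\varphi$ is a weak solution of $\operatorname{div}(|x_n|^s\nabla\tilde\varphi)=0$ on all of $\mathbb R^n$ with growth $O(|x|^{1+\sigma})$, and then invoke the Liouville theorem for $A_2$-weighted divergence-form equations (Fabes--Kenig--Serapioni plus scaling) to conclude $\tilde\varphi$ is constant-plus-linear. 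The boundary condition then rules out the $x_n^{1-s}$ mode exactly as you say. With this adjustment your proposal is sound.
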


In order to prove the convergence of the rescalings $\tilde w$ to a solution of \eqref{LiE} we first establish a Harnack type inequality for $\tilde w$ in the set $\{w>0\}$. 

\begin{lem}\label{HI}
Assume that $w$ is a viscosity solution in $B_1$ and 
$$ x_n +a - \eps \le w \le (x_n + a + \eps)^+, \quad \mbox{for some $a$, $|a| \le 1/10$.} $$If 
\begin{equation}\label{wgexn}
w \ge x_n+a  \quad \mbox{at} \quad x_0= \frac 12 e_n,
\end{equation} 
then
$$w \ge x_n + a+ (c-1) \eps \quad \mbox{in $B_{1/2}$.}$$
Similarly, if $$ w \le x_n+a  \quad \mbox{at} \quad x_0= \frac 12 e_n,$$ then
$$w \le (x_n + a+ (1-c) \eps)^+ \quad \mbox{in $B_{1/2}$.}$$
\end{lem}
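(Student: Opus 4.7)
\textbf{Plan for Lemma \ref{HI}.} The two statements are symmetric; I focus on the first (the second is the analogous upper bound, proved with a supersolution barrier of $\mathcal C^+$ type with $\mu>0$). Set $\bar w(x) := x_n + a - \eps$, so that $w \ge \bar w$ in $B_1$ with $w(x_0) \ge \bar w(x_0) + \eps$ at $x_0 = \tfrac{1}{2} e_n$; the goal is to upgrade this single interior inequality into a uniform gap $w \ge \bar w + c\eps$ on $B_{1/2}$. The argument combines an interior Harnack inequality for $w-\bar w$ far from the free boundary with a sliding-barrier argument that exploits the viscosity free boundary condition in Definition \ref{defnw}.

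For the interior step I work in the subdomain $D := \{x \in B_{3/4} : x_n + a \ge 1/5\}$, on which $w \ge \bar w \ge 1/10$. Subtracting equations gives, for $v := w - \bar w \ge 0$,
\[
\triangle v \;=\; \frac{h(\nabla w)}{w} \;=\; \frac{(1-\alpha)\bigl(2\, v_n + |\nabla v|^2\bigr)}{\bar w + v},
\]
which, together with interior $C^{1,\beta}$ bounds in the region $\{w\ge 1/10\}$ that force $|\nabla v| = |\nabla w - e_n| = O(\eps)$, is a linear elliptic equation for $v$ with bounded coefficients and an $O(\eps^2)$ forcing term. The classical Harnack inequality applied to the nonnegative $v$ (with $v(x_0)\ge\eps$) then yields $v \ge c_1\,\eps$ on any fixed subdomain $D' \Subset D$ containing, say, $B_{1/4}(x_0)$.

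To propagate this improvement down to the free boundary, I would build a one-parameter family of strict sub-barriers
\[
P_t(x) \;:=\; \bar w(x) + t\eps\, \Phi(x), \qquad t \in [0, c_2],
\]
for a fixed radial $\Phi$ chosen so that: (i) $\Phi \ge c_3$ on $D'$ and $\Phi \le 0$ outside $B_{3/4}$; (ii) $\triangle P_t > h(\nabla P_t)/P_t$ in $\{P_t > 0\} \cap B_{3/4}$, i.e.\ a strict subsolution of \eqref{weq}; and (iii) at each $z \in F(P_t)$, the normal profile of $P_t$ expands as $d + \mu(t)\, d^{3-2\alpha} + o(d^{3-2\alpha})$ with $\mu(t) < 0$. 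The natural candidate is built from the one-dimensional perturbed profile of Section \ref{the1d}, read in the $w$-coordinate via $w = (u/c_0)^{1/\alpha}$: the negative sign of the $d^{3-2\alpha}$ coefficient is automatic, while the interior inequality (ii) reduces to a computation of the same flavor as Lemma \ref{strict}. Letting $t^\ast \in [0, c_2]$ be maximal with $P_{t^\ast} \le w$ on $\overline{B_{3/4}}$, the preceding interior step together with $\Phi\le 0$ outside $B_{3/4}$ forces $t^\ast \ge c$ universal; a first contact point $z_\ast$ then exists and is either interior to $\{w>0\}$, in which case (ii) contradicts the maximum principle against $w-P_{t^\ast}\ge 0$, or $z_\ast \in F(w)\cap F(P_{t^\ast})$, in which case $P_{t^\ast}$ realizes exactly the configuration forbidden by Definition \ref{defnw} (a $\mathcal C^-$ function with $\mu(t^\ast)<0$ touching $w$ from below). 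Both alternatives yield a contradiction, so $t^\ast = c_2$ and $w \ge \bar w + c\eps$ throughout $B_{1/2}$.

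The main difficulty is isolating a single $\Phi$ for which (ii) and (iii) hold simultaneously: adding $t\eps\,\Phi$ to the linear profile $\bar w$ perturbs both the free boundary of $P_t$ and its normal expansion, and one must check that the $d^{3-2\alpha}$ coefficient remains strictly negative (and bounded away from $0$) uniformly in $t \in (0, c_2]$. The exponent $3-2\alpha = 1-s$ is precisely the one that appears in the linearized problem \eqref{LiE} and in Definition \ref{defnw}, and it is reached from the first integral of the underlying ODE as in Section \ref{the1d}. This is the negative-power analogue of the sub-barrier construction in \cite{DS2}, and is carried out by the same sign-tracking calculation as in Lemma \ref{strict}, now in the $w$-variable and with opposite sign convention.
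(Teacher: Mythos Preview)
Your two-step strategy (interior Harnack away from $F(w)$, then a sliding sub-barrier to reach the free boundary) is exactly the paper's. The interior step is fine. The barrier step, however, has two genuine problems.

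\textbf{Sign/class of the comparison function.} In your sliding argument you have $P_{t}\le w$ and seek a contact at $z_\ast\in F(w)\cap F(P_t)$. At such a point $w$ touches $P_t$ \emph{by above}. By Definition~\ref{defnw} the configuration that is forbidden in this direction is a $\mathcal C^+$ comparison function with \emph{positive} second-order coefficient, i.e.\ $\psi=d+\mu\,d^{3-2\alpha}$ with $\mu>0$. You instead require (iii) $\mu(t)<0$ and invoke the $\mathcal C^-$ clause; that clause governs the case $w\le\psi$, which is the opposite ordering. With your signs, the contact at $z_\ast$ gives no contradiction. This is not merely cosmetic: the ``negative sign of the $d^{3-2\alpha}$ coefficient is automatic'' you quote from Section~\ref{the1d} is precisely the wrong sign for a lower barrier.

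\textbf{Form of the barrier.} Writing the family as $P_t=\bar w+t\eps\,\Phi$ with a \emph{fixed} $\Phi$ is not compatible with (iii). The free boundary $\{P_t=0\}$ moves with $t$, and a fixed smooth (or even radially singular) $\Phi$ does not in general produce a normal expansion of the form $d+\mu(t)\,d^{3-2\alpha}$ with $\mu(t)$ bounded away from $0$ at that moving boundary; generically you only see $c(t)\,d+O(d^2)$. The paper avoids this by building the barrier directly in the shape dictated by Definition~\ref{defnw}: with $d$ the distance to the boundary of a ball of radius $\sim(\mu\eps)^{-1}$ tangent to the flat free boundary, one sets
\[
\Psi \;=\; d \;+\; \mu\eps\bigl(d^{\,1-s}+A\,d^{\,\beta}\bigr),\qquad 1-s<\beta\le\min\{1-2s,2\},
\]
so that the $d^{1-s}=d^{3-2\alpha}$ term has a \emph{positive} coefficient by construction (hence $\Psi\in\mathcal C^+$ with $\mu>0$), and a direct computation shows $\triangle\Psi>h(\nabla\Psi)/\Psi$ in an annulus for $A$ large and $\eps$ small. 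One then slides $\Psi(\cdot+te_n)$ and checks the boundary ordering on a thin cylinder near $\{x_n=0\}$ using the interior gain $w\ge x_n+c(\sigma)\eps$ on $\{x_n=\sigma\}$ and the curvature of $\partial B$ on the lateral boundary. The maximum principle plus the $\mathcal C^+$, $\mu>0$ clause of Definition~\ref{defnw} then rule out any first contact, giving the improvement near the free boundary.

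In short: keep your Step~1, but in Step~2 flip the sign convention to $\mathcal C^+$ with $\mu>0$, and replace the ansatz $\bar w+t\eps\Phi$ by an explicit barrier of the $\Psi$-type above whose $d^{3-2\alpha}$ coefficient is built in from the start.
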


\begin{proof}

After a translation, we may assume for simplicity that $a=\eps$ and $w \ge x_n$ in $B_1$. 

Since $w$ is trapped between two flat solutions $x_n$ and $x_n + 2 \eps$ which solve the same equation \eqref{weq}, we find that the interior estimates and the Harnack inequality \eqref{w_int} below continue to hold for the difference between $w$ and $x_n$ (see \cite{S}). 
Alternatively, if $x_n \ge \frac \sigma 2$, for some $\sigma>0$ small depending on $n$ and $s$ to be specified later, then $w$ is bounded below by $c(\sigma)$ thus (see \eqref{tidw})
$$|\triangle \tilde w| =\frac {1}{ \eps w} |h(e_n+ \eps \nabla \tilde w )| \le C(\sigma) |\nabla \tilde w| \quad \mbox{in} \quad \{ x_n \ge \frac \sigma 2\} \cap \{|\nabla \tilde w| \le \frac 1 \eps\}.$$
Then, according to Lemma 3.8 in \cite{DS2} we find $|\nabla \tilde w| \le C (\sigma)$ in $\{x_n \ge \sigma\} \cap B_{3/4}$. 
In particular if we are in the situation \eqref{wgexn}, then $\tilde w(x_0) \ge 1$, and we can apply Harnack inequality for $\tilde w \ge 0$
\begin{equation}\label{w_int}
w \ge x_n  + c(\sigma) \eps  \quad \mbox{in} \quad \{|x'| \le \frac 12\} \times \{ \sigma \le x_n \le \frac 12  \}.
\end{equation}
  This proves the conclusion in $B_{1/2} \cap \{x_n \ge \sigma\}$, and it remains to prove that a similar inequality can be extended near $x_n=0$. Towards this aim we compare $w$ with an explicit subsolution $\Psi$ that we construct below.

Let $\mu>0$ be a small universal constant, and denote by $B:=B_{\frac {1}{\mu \eps}}(\frac {1}{\mu \eps} e_n)$ the ball of radius $\frac {1}{\mu \eps}$ centered at $\frac {1}{\mu \eps} e_n$. Also, let $d(x)$ denote the distance from $x$ to the boundary of the ball $B$ when $x \in B$, and extend $d(x)=0$ outside $B$.
In the cylinder $$\mathcal C :=\{|x'| \le \frac 12\} \times \{ |x_n| \le \sigma  \},$$ we compare $w$ and translations of 
$$\Psi=d + \mu \eps (d^{1-s} + A d^{\beta}),$$
for some $\beta$ in the interval $1-s < \beta \le \min\{ 1- 2s, 2\}$. The constant $A$ is large universal, chosen such that $\Psi$ is a subsolution to the equation \eqref{weq}. Indeed,
\begin{align*}
 \triangle \Psi  = \mu \eps  &   \left((1-s)(-s) d^{-s-1} + \beta (\beta-1)A d^{\beta -2}\right) \\
 & - \frac{n-1}{\frac{1}{\eps \mu} - d} \left (1+ \mu \eps (1-s) d^{-s} + \mu \eps A \beta d^{\beta-1}\right) \\
 \ge \eps \mu & \left((1-s)(-s)\cdot d^{-s-1} + \beta (\beta-1) A d^{\beta-2} +  \frac{2(n-1)}{s}\right) ,
\end{align*}
while, by \eqref{nablah}, $$|\nabla h(z)| = (-s) (|z|-1) + O((|z|-1)^2),$$
and $\Psi \ge d$ implies
$$ \frac{h(\nabla \Psi)}{\Psi} \le (-s) \mu \eps \frac{ (1-s)d^{-s} +  A\beta d^{\beta-1}}{d} + O ((\mu \eps)^2 d^{-2s-1}).$$
Thus, if $d \in (0,2)$ then
\begin{align*}
 \triangle \Psi - \frac{h(\nabla \Psi)}{\Psi}  & \ge \mu  \eps  \left (\beta (\beta -1+s) A d^{\beta-2}  + \frac{2(n-1)}{s}\right) \\
 & \quad + O ((\mu \eps)^2 d^{-2s-1}) \\
  & >0,  
 \end{align*}
 provided that $A$ is chosen large depending on $n$ and $s$, and $\eps$ is sufficiently small. Notice also that by construction, $\Psi$ satisfies the viscosity subsolution property of Definition \ref{defnw} on the free boundary $\p B$.

Next we check that on the boundary of $\mathcal C$ where either $|x'|=\frac 12$ or $x_n= \sigma$ we have
\begin{equation}\label{500}
w \ge \Psi (x + \frac{\mu \eps}{32} e_n).
\end{equation}
Indeed, 
$$d \le (x_n - \frac {\mu \eps} {4} |x'|^2)^+,$$ 
and in the set $\Psi >0$ where $|x'|=\frac 12$, $|x_n| \le \sigma$ we have
$$ \Psi \le x_n - \frac{\mu \eps}{4} |x'|^2 + \mu \eps (x_n ^{1-s} + A x_n^\beta)  \le x_n - \frac{\mu \eps}{32},$$
provided that $\sigma$ is chosen sufficiently small. This implies that
$$ \Psi (x + \frac{\mu \eps}{32} e_n) \le  x_n^+  \le w,$$
on the boundary of $\mathcal C$ where $|x'|=1/2$.

When $0 \le x_n \le 2 \sigma$, we use $d \le x_n$ and obtain
$$\Psi \le x_n + \eps \mu (x_n +  A x_n^\beta) \le x_n + c(\sigma) \eps - \frac{\mu \eps}{32},$$
provided that $\mu$ is chosen small, depending on $c(\sigma)$. Thus, on the part of the boundary of $\mathcal C$ where $x_n=\sigma$ we use \eqref{w_int} and obtain $ \Psi (x + \frac{\mu \eps}{32} e_n)\le w$, which proves the claim \eqref{500}. By comparing $w$ with a continuous family of translations $ \Psi (x + t e_n)$, with $t \le \frac{\mu \eps}{32} $ we conclude that the inequality \eqref{500} holds also in the interior of $\mathcal C$. This implies the desired inequality in a neighborhood of the origin which gives the conclusion of the lemma. 
\end{proof}

\begin{proof}[Proof of Proposition $\ref{P1}$]

Assume that for a sequence of $\eps_k \to 0$ and $w_k$ the conclusion does not hold for some $r$ universal, to be made precise later. By applying Lemma \ref{HI} repeatedly we conclude that, after passing to a subsequence, the graphs of
$$ \tilde w_k := \frac{w_k-x_n}{\eps_k} $$
defined in $ \overline{\{w_k >0\}}$, converge uniformly on compact sets to the graph of a H\"older limiting function $\bar w$  define in $\overline {B}_{1}^+$, and $$|\bar w|\le 1 \quad \bar w(0)=0,$$ 
since $|\tilde w_k| \le 1$, $\tilde w_k(0)=0$. The function $\tilde w_k$ solves the equation
$$ \triangle w_k = \frac {1}{ \eps_k} \frac{h(e_n+ \eps_k \nabla \tilde w_k)}{x_n + \eps_k \tilde w_k }:= g(\eps_k,x_n, \tilde w_k,\nabla \tilde w_k)$$
and 
$$g(\eps_k,x_n, z, p) \to \frac {\nabla h(e_n)  \cdot p}{x_n}=-s \frac{p_n}{x_n} \quad \mbox{as $k \to \infty$,}$$
we find that $\bar w$ solves the linear equation
\begin{equation}\label{501}
\triangle \bar w + s \frac{\bar w_n}{x_n} =0 \quad \mbox{in} \quad B_1^+,
\end{equation}
in the viscosity sense.

{\it Claim:} $\bar w$ satisfies the boundary condition of \eqref{LiE} on $\{x_n=0\}$. 

Assume by contradiction that, say for simplicity $\bar w$ is touched by below at $0$ by 
$$ - a |x'|^2 + p x_n^{1-s} $$
for some constants $a$, $p>0$. After, relabeling $p/2$ by $p$, and $a/2$ by $a$, we may assume that $w$ is touched strictly by below at $0$ by the function
$$q(x):= - a ( |x'|^2 - A x_n^ \beta) + p x_n^{1-s} , \quad \quad p>0,$$
with $\beta$ in the interval $1-s < \beta < \min \{ 1-2s, 2\}$, and $A$ is large such that $q$ is a subsolution of the equation \eqref{501} (notice that $x_n^{1-s}$ is a solution of \eqref{501}).

We construct the function $\Psi$ 
$$ \Psi= d + \eps p d^{1-s} + \eps a A d^\beta$$
where, as in Lemma \ref{HI}, $d=d(x)$ is the distance from $x$ to the boundary of the ball $B=B_{\frac {1}{2 \eps a}}(\frac {1}{2 \eps a} e_n)$ of radius $\frac {1}{2 \eps a}$ and center $\frac {1}{2 \eps a} e_n$. The same computation as in Lemma \ref{HI} shows that $\Psi$ is a subsolution to the equation \eqref{weq}. Moreover, using that $$d=(x_n - \eps a |x'|^2)^+ + O(\eps^2) $$
we find that
$$\tilde \Psi:=\frac{\Psi - x_n}{\eps} \quad \mbox{in} \quad \overline{\{ \Psi>0\}},$$
converges uniformly to $q(x)$ as $\eps \to 0$. Since $q(x)$ touches strictly by below $\bar w$ at the origin, and $w_k$ converge uniformly to $\bar w$, we conclude that a small $e_n$-translation of the graph $\Psi$ restricted to $\overline{\{ \Psi>0\}}$ (with $\eps=\eps_k$) must touch by below the graph of $w_k$ at a point $x_k \to 0$. We reached a contradiction since $\Phi$ is a strict subsolution, and the claim is proved.

\

Next we apply Theorem \ref{T7.2} to $\bar w$ and conclude that
$$|\bar w-a' \cdot x'| \le \frac{r}{8} \quad \mbox{in} \quad \overline {B}_r^+,$$
for some $r>0$ universal depending only on $n$ and $s$. This implies
$$\left(x_n + \eps_k (a' \cdot x' - \frac r 4)\right)^+ \le w_k \le (x_n + \eps_k (a' \cdot x' + \frac r 4))^+  \quad \mbox{in} \quad \overline B_r^+,$$
holds for large $k$. Then the conclusion is satisfied for $w_k$ with $\nu_k = \frac{e_n + \eps_k a'}{|e_n + \eps_k a'|}$ and we reached a contradiction.
\end{proof}

\end{document}